%% file: main.tex
\documentclass[11pt, a4paper]{amsart}
\usepackage[T1]{fontenc}

\usepackage[margin=3cm, top=4cm, bottom=3cm]{geometry}
  
\usepackage[dvipsnames]{xcolor}
\usepackage{xcolor-solarized}
\usepackage{amsthm}
\usepackage{amscd}
\usepackage{calc} 

\usepackage{amsmath,amssymb}
\usepackage{mathtools}
\usepackage{dsfont}

\usepackage{graphicx} 

\usepackage[colorlinks=true, allcolors=blue]{hyperref}
\usepackage{cleveref}

\usepackage{enumitem}
\usepackage{comment}
\usepackage{caption}
\usepackage{subcaption}

\usepackage[arrow, matrix, curve]{xy}
\usepackage{tikz}
\usepackage{tikz-cd}
\usetikzlibrary{
	cd,
	calc,
	positioning,
	fit,
	arrows,
	decorations.pathreplacing,
	decorations.markings,
	shapes.geometric,
	backgrounds,
	bending
}
\usepackage{tikzsymbols}
\usepackage{physics}

\usepackage{newtxtext,newtxmath}

\newcommand{\Z}{\mathbb{Z}}

\newcommand{\cS}{\mathcal{S}}

\newcommand{\cX}{\mathcal{X}}

\newcommand{\fF}{\mathfrak{F}}

\let\ker\relax
\DeclareMathOperator{\ker}{Ker}

\DeclareMathOperator{\sgn}{sgn}
\DeclareMathOperator{\Hom}{Hom}

\DeclareMathOperator{\crossing}{cr}

\DeclareMathOperator{\red}{red}
\DeclareMathOperator{\tor}{Tor}

\usepackage{amsthm}
\usepackage{thmtools}
\declaretheoremstyle[notefont=\bfseries,notebraces={}{},%
    headpunct={},postheadspace=1em]{mystyle}
\declaretheorem[style=mystyle,numbered=no,name=Theorem]{thm-hand}

\declaretheorem[style=plain, parent=section]{theorem}
\declaretheorem[sibling=theorem]{definition}
\declaretheorem[sibling=theorem]{lemma}

\declaretheorem[sibling=theorem]{corollary}
\declaretheorem[sibling=theorem]{proposition}

\declaretheorem[sibling=theorem, style=remark]{remark}
\declaretheorem[sibling=remark]{example}

\begingroup\expandafter\expandafter\expandafter\endgroup
\expandafter\ifx\csname pdfsuppresswarningpagegroup\endcsname\relax
\else
\fi

\definecolor{amaranth}{rgb}{0.9, 0.17, 0.31} 
\definecolor{carrotorange}{rgb}{0.80, 0.5, 0.01} 
\definecolor{citrine}{rgb}{0.89, 0.82, 0.04} 
\definecolor{dartmouthgreen}{rgb}{0.05, 0.5, 0.06} 
\definecolor{ballblue}{rgb}{0.13, 0.67, 0.8} 
\definecolor{ceruleanblue}{rgb}{0.16, 0.32, 0.75} 
\definecolor{amethyst}{rgb}{0.6, 0.4, 0.8} 
\definecolor{amber}{rgb}{1.0, 0.75, 0.0} 
\definecolor{burlywood}{rgb}{0.87, 0.72, 0.53} 

\usepackage{cite}

\begin{document}


\title{Reduced even and odd Khovanov homology of positive and $0$-adequate links}
\author[N\, Manikandan]{Naageswaran Manikandan}
\address{Max Planck Institute for Mathematics in Bonn}
\email{manikandan@mpim-bonn.mpg.de}


\date{} 

\keywords{reduced Khovanov homology, odd Khovanov homology, positive knots, fibered knots, cables of positive knots.}

\begin{abstract}
In this article, we study reduced even and odd Khovanov homology, with a particular focus on positive and $0$-adequate links. For positive links, we show that the first reduced homology is supported in a single quantum grading, where it is free abelian, and that its rank is determined by the Seifert graph of any positive diagram. As a consequence, reduced even and odd Khovanov homology, as well as unreduced odd Khovanov homology, detect fiberedness among positive links, extending the previously known result for unreduced even Khovanov homology. We further show that the torsion in these three theories does not detect fiberedness in the same way as the torsion in unreduced even Khovanov homology.

We further show that $(p,q)$-cables of positive knots with $q\geq p$ exhibit the same behavior, even though such cables need not themselves be positive. We also study the stable homotopy types associated to reduced even and odd Khovanov homologies in the extremal and almost extremal quantum gradings of $0$-adequate links. At the almost extremal quantum grading, we show that the reduced stable homotopy types are wedges of sphere spectra. This contrasts with the unreduced setting, where the corresponding homotopy types depend on whether the $0$-state graph is bipartite.
\end{abstract}

\makeatletter
\@namedef{subjclassname@2020}{%
  \textup{2020} Mathematics Subject Classification}
\makeatother

\subjclass[2020]{57K10, 57K14, 57K18.}

\maketitle
\tableofcontents

\input{Introduction}
\input{Acknowledgement}
\input{Intro_OddKH}

\input{RedEvenKH_PositiveLinks}

\input{Extreme_AlmostextremeKH.tex}
\input{RedoddKH_PositiveLinks}

\input{Extreme_AlmostextremeOddKH.tex}

\input{General_coefficients.tex}
\input{Cables_OddKH}

\bibliographystyle{hamsalpha}
\bibliography{bibliography_knot3}

\end{document}

%% file: Introduction.tex
\section{Introduction}

Khovanov homology, introduced by M.~Khovanov in \cite{Khovanov_homology}, is a categorification of the Jones polynomial and has emerged as a powerful invariant in low-dimensional topology, see~\cite{Rasmussen_sInvaraint,KM_KhUnknotDetection,BaldwinSivekXie_HopflinkDetection,BaldwinSivek_Trefoildetection, ren2024khovanovhomologyexotic4manifolds}. In addition to the original (even) theory, an odd variant was constructed by P.~Ozsv{\'a}th, J.~Rasmussen, and Z.~Szab{\'o} in \cite{ORS_OddKhovanovHomology} which retains much of the formal structure of the even theory while detecting genuinely different phenomena. Both theories admit reduced versions, obtained by the choice of a basepoint, which are often more amenable to computation and exhibit sharper structural features.

In this paper, we study the reduced even and odd Khovanov homologies of positive links and $0$-adequate links. Our approach is motivated by prior work on the structure of Khovanov homology for these classes, in particular \cite{Stosic_PositiveBraids,semi_adequate_torsion, extremeKH, almost_extremeKH, KMMS_Khovanov_pos_fib}, where strong constraints on the extremal and near-extremal gradings are established.

A guiding objective of this work is to understand to what extent \emph{positivity} of a link can be detected purely from Khovanov homology. This motivation is inspired by analogous results in Floer-theoretic settings, notably the characterization of strongly quasipositive fibered knots via the Ozsv{\'a}th--Szab{\'o} $\tau$-invariant, as established by M.~Hedden in \cite{Hedden_Positivitynotion}. In this context, one seeks structural features of a homological invariant that are rigid enough to encode the positivity of the underlying link.

More concretely, it is natural to expect that the behavior of the initial homological gradings in Khovanov homology carries information about positivity. In this paper, we take steps in this direction by studying the reduced even and odd Khovanov homologies. The reduced framework proves particularly well-suited for the odd Khovanov homology, allowing for a direct comparison with the even case and revealing structural similarities and differences between the two theories.

Another major theme of this work is the study of \emph{stable homotopy types} associated with reduced Khovanov homology, both even and odd. Since their constructions in \cite{LS_StableHomotopyType,SSS_OddStableHomotopyType}, these homotopy types have attracted interest as they provide finer topological information than homology alone~\cite{LS_SteenrodSquares}. While a full characterization of positivity via stable homotopy types is beyond the scope of this paper, we take initial steps in this direction by analyzing their structure in extremal and almost-extremal quantum gradings. In doing so, we generalize results from \cite{almost_extremeKH}, which addressed the unreduced even Khovanov homology of positive links. Since all positive link diagrams are $0$-adequate, these findings naturally apply to positive diagrams as well, and offer a foundation for further study of homotopy-theoretic invariants in the context of positivity.

\subsection{Khovanov homology of positive links}

\begin{figure}[h!]
	\centering
	\includegraphics[width=.30\textwidth]{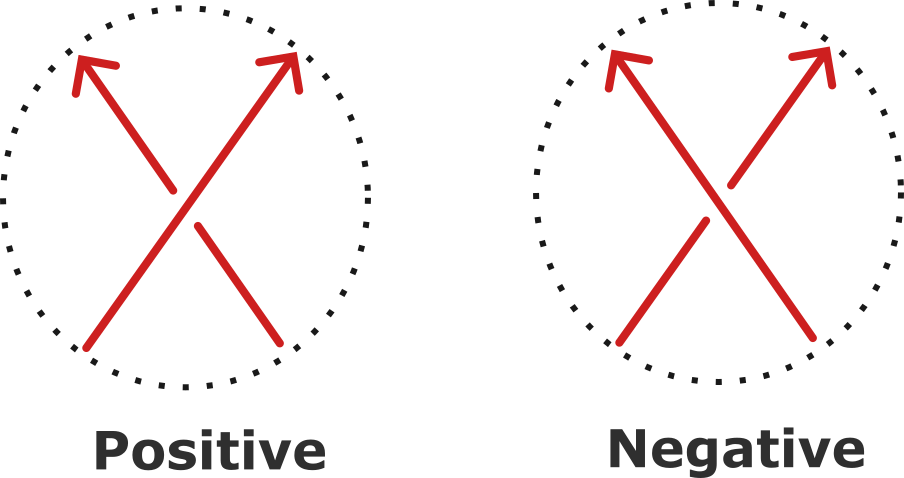}
	\caption{Crossing sign convention.}
	\label{fig:crossingsign}
\end{figure}

Given an oriented link diagram, each crossing can be classified as either positive or negative using the convention in Figure~\ref{fig:crossingsign}. A diagram is called \emph{positive} (resp. negative) if all of its crossings are positive (resp. negative).

\begin{definition}[Positive links]
    A link is called \emph{positive} if it admits a positive diagram.
\end{definition}

A central result of this paper is the following theorem.

\begin{theorem} \label{thm: maintheorem1}
Let $L$ be a non-split positive link, with a base point $p$. Then
\begin{equation*}
Kh_{e,\red}^{1,j}(L,p) \cong Kh_{o,\red}^{1,j}(L) =
\begin{cases}
\Z^{p_1(L)} & \text{if } j = 3 - \chi(L),\\
0 & \text{otherwise,}
\end{cases}
\end{equation*}
where $\chi(L)$ denotes the Euler characteristic of a genus-minimizing Seifert surface of $L$, and $p_1(L)$ is the first cyclomatic number of the reduced Seifert graph of any positive diagram of $L$ (see Section~\ref{sec: Khovanov homology} for a precise definition).
\end{theorem}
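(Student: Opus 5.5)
Fix a positive, non-split diagram $D$ of $L$ with $n$ crossings and $s$ Seifert circles. Since $D$ is positive, the all-$0$ resolution is the oriented (Seifert) resolution, so the $0$-state circles are exactly the Seifert circles. The $0$-state graph $G_0(D)$ is the Seifert graph. It is bipartite, and since $L$ is non-split it is connected. By the normalisations of Section~\ref{sec: Khovanov homology}, the minimal quantum grading of the reduced complex occurs in homological degree $0$ at $j = n - s + 1 = 1-\chi(L)$. This uses that Seifert's algorithm on a positive diagram yields a minimal-genus surface, so $\chi(L)=s-n$. The statement concerns homological degree $1$, so the plan is to show that in degree $1$ only the quantum grading $1-\chi(L)+2$ has homology, and to identify that group.

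\textbf{Step 1 (grading support).} Fix the basepoint $p$ on some Seifert circle, marked $x$ in the reduced theory. In homological degree $1$ a generator is a $1$-state $s_c$ (one crossing $c$ changed to its $1$-smoothing) together with labels on its circles. A relabeling argument in the spirit of \cite{KMMS_Khovanov_pos_fib} and \cite{almost_extremeKH} should show two things:
\begin{itemize}
\item In quantum gradings $\geq 1-\chi+4$, every degree-$1$ cycle is a boundary. Write the degree-$1$ and degree-$2$ chain groups in these gradings via labelings with at least two $1$'s, and build an explicit contraction using the edge structure of the Seifert graph.
\item Quantum grading $1-\chi$ has nothing in degree $1$: a degree-$1$ generator there would require a labeling with fewer $1$'s than the all-$x$ labeling allows after the merge/split at $c$.
\end{itemize}
Positivity enters because smoothing a crossing between two distinct Seifert circles always merges them. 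Hence the degree-$1$ states all have $s-1$ circles, with the merged circle containing or not containing $p$.

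\textbf{Step 2 (the grading $j=3-\chi(L)$).} In this grading the reduced complex in degrees $0,1,2$ has a combinatorial description. Degree $0$ is spanned by the all-$x$ state with a single $1$ on one unmarked circle; equivalently, it is spanned by vertices of $G_0$ other than the basepoint vertex. Degree $1$ is spanned by the crossings, i.e.\ the edges of $G_0$ with multiplicity. Degree $2$ is spanned by pairs of crossings, subject to the relations coming from the square faces.

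The plan is to show that this piece is isomorphic to the cellular cochain complex, in low degrees, of the graph obtained from $G_0$ by collapsing parallel edges; that is, of the reduced Seifert graph. Concretely:
\begin{itemize}
\item Pairs of parallel crossings between the same two Seifert circles produce degree-$2$ generators whose differentials identify the corresponding edge generators up to sign.
\item Pairs of non-parallel crossings give no new relations.
\end{itemize}
Then $Kh^{1}$ is the cokernel of $\delta^0$ modulo these identifications, i.e.\ $H^1$ of a connected graph, which is $\Z^{p_1(L)}$ and free. The reduced version is used precisely to kill the $H^0$ contribution and the $\Z/2$-issues of bipartiteness seen in the unreduced case. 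The outcome is independent of the positive diagram, since the left side is a link invariant.

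\textbf{Step 3 (odd theory).} The odd complex has the same underlying groups. Only the signs of the edge maps change, determined by a type-X/type-Y decoration. The hope is that in these low gradings all relevant squares involve merges of distinct Seifert circles. If so, the odd signs can be absorbed by a change of basis (rescaling generators by $\pm1$), giving an isomorphic complex. This should also follow from the standard fact that even and odd reduced homology agree mod $2$, combined with freeness: if both are free of the same rank, they are isomorphic. For the odd theory I would rather do the direct sign check on the parallel-crossing squares.

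The main obstacle is Step 1's vanishing above the grading $3-\chi$ without assuming the diagram is braid-like. The degree-$1$ complex in higher quantum gradings involves labelings with several $1$'s, and a clean contraction requires a careful choice of spanning tree of the Seifert graph rooted at the basepoint circle. I would first attempt an induction on the number of crossings, deleting a crossing whose removal keeps the diagram positive and non-split and applying the skein long exact sequence. This runs alongside an induction on $p_1$, collapsing parallel crossings using that the reduced homology of a positive diagram is unchanged in these gradings under adding a parallel positive crossing.
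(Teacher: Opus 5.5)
\textbf{There is a genuine gap: Step~1 is not proved.} The vanishing $Kh^{1,j}_{*,\red}(L)=0$ for $j\ge 5-\chi(L)$ is exactly the ``supported in a single quantum grading'' half of the theorem. Your second bullet is immediate, since $3-\chi(L)$ is already the lowest quantum grading in homological degree $1$, so all the content sits in the first bullet, and you give no argument for it. The ``explicit contraction'' is never constructed. The claim that $Kh^1_{\red}$ is unchanged under adding a parallel crossing is only asserted. The skein-sequence sketch omits the one term that decides everything. In
\[
Kh^{\mathrm{low}}_{\red}(D_1)\longrightarrow Kh^{1,j}_{\red}(D)\longrightarrow Kh^{1,j-1}_{\red}(D_0)
\]
the diagram $D_1$ is not positive, so your induction says nothing about the left term, and without it the sequence yields no vanishing.

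The paper gets the concentration from a different source. First it shows $Kh^1_{\red}(D)\cong Kh^1_{\red}(D')$ in all quantum gradings, where $D'$ keeps one crossing between each pair of adjacent Seifert circles. This works because a degree-$1$ cycle must have equal components (up to sign in the odd case) on parallel crossings, since the split maps $x\mapsto(a_p\pm a_q)\otimes x$ are injective. It then invokes thinness of the reduced even and odd homology of the link $L'$, treated as alternating. This places $Kh^1_{\red}(D')$ in the single grading $\sigma(L')+2=p_1+2$, before the shift by $n-n'$. The rank comes from an induction on $p_1(D')$ through the skein sequence.

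\textbf{How to complete your route.} Your skein approach works, with no thinness input, once you control the left term.
\begin{itemize}
\item If $c$ has a parallel crossing $c'$: in the all-$0$ state of $D_1$, the crossing $c'$ joins the merged circle to itself. Its split map is injective, hence so is $\partial^0_{D_1}$, and the left term is $0$ in every grading.
\item If $c$ has no parallel crossing: $D_1$ is $0$-adequate with connected graph $G_0(D)/c$. Write a reduced kernel element of the merge maps in the basis of products omitting a set $S$ of circles. The coefficient of $S$ vanishes if $S$ contains both ends of an edge, because that merge sends it to an element hit by no other basis vector. Moving an omitted circle across an edge changes the coefficient only by a sign. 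So pushing an omitted circle along a path to the basepoint circle, which is never omitted, kills every coefficient with $S\neq\emptyset$. Hence the left term vanishes unless $j=3-\chi(D)$.
\end{itemize}
Therefore $Kh^{1,j}_{\red}(D)\hookrightarrow Kh^{1,j-1}_{\red}(D_0)$ for $j\neq 3-\chi(D)$. Since $\chi(D_0)=\chi(D)+1$, induct over crossings that are not bridges of the Seifert multigraph. The induction ends at a simple tree, which is an unknot diagram, and gives the vanishing in both theories.

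\textbf{Steps 2 and 3.} Your Step~2 is correct and is more direct than the paper's induction for the rank. In grading $3-\chi$, degree-$2$ generators exist only for parallel pairs. The map $\Z^{V\setminus\{v_p\}}\to\Z^{E(G_0(D)^{\red})}$ is split injective for \emph{any} $\pm1$ signs, because a spanning tree gives a triangular unimodular minor. So the cokernel is $\Z^{p_1}$, free. This also settles Step~3 in that grading, and you need it: the mod-$2$ comparison you propose is circular, since it presupposes that the odd group is free.
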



Several structural properties of reduced Khovanov homology for positive links are already known. In particular, if $L$ is a non-split positive link, then
\begin{align*}
    Kh_{*,\red}^{i,j}(L,p)&=0 \,\textrm{ if } \,i<0 \,\textrm{ or }\, j<1-\chi(L),\\
    Kh_{*,\red}^{0,j}(L,p)&=\begin{cases}
        \Z &\textrm{ if } \,j=1-\chi(L),\\
    0  &\textrm{ otherwise, } \\
    \end{cases}\\
    Kh_{*,\red}^{i,1-\chi(L)}(L,p)&=0 \, \textrm{ if }\,  i\neq0,
\end{align*}
where $*\in \{e,o\}$. The first two statements follow directly from the construction of Khovanov homology. The third, while well known to experts, does not appear to have a readily accessible reference in the literature; for completeness, we include a proof in Section~\ref{sec: Reduced even Khovanov homology of positive links}.

Theorem~\ref{thm: maintheorem1} is closely aligned in spirit with a result in \cite[Theorem~1.1]{KMMS_Khovanov_pos_fib}, which establishes that the unreduced even Khovanov homology of a non-split positive link~$L$ satisfies
\begin{equation}\label{eqn:UnreducedEvenKH_positivity}
Kh_{e}^{1,j}(L) \cong
\begin{cases}
\mathbb{Z}^{p_1(L)} & \text{if } j = 2 - \chi(L), \\
0 & \text{otherwise.}
\end{cases}
\end{equation}
In this paper, we extend this characterization to both the reduced even and odd Khovanov homologies. 
A natural question is to ask how the unreduced odd Khovanov homology looks in this context. This can be deduced from the identity
\[
Kh_o^{i,j}(L) \cong Kh_{o,\mathrm{red}}^{i,j-1}(L) \oplus Kh_{o,\mathrm{red}}^{i,j+1}(L),
\]
established in \cite{ORS_OddKhovanovHomology}, yielding the following result.

\begin{theorem}\label{thm: maintheorem2}
For a non-split positive link $L$, the unreduced odd Khovanov homology satisfies
\begin{equation*}
\begin{aligned}
Kh_{o}^{1,j}(L) &=
\begin{cases}
\Z^{p_1(L)} & \text{if } j = 2-\chi(L) \text{ or } 4-\chi(L),\\
0 & \text{otherwise.}
\end{cases}
\end{aligned}
\end{equation*}
where $\chi(L)$ and $p_1(L)$ are defined as before.
\end{theorem}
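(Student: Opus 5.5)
Theorem~\ref{thm: maintheorem2} follows from Theorem~\ref{thm: maintheorem1} together with the splitting of unreduced odd Khovanov homology proved in \cite{ORS_OddKhovanovHomology}. No new chain-level computation is needed; the work is only in tracking the quantum gradings.

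The splitting says that for every $i,j$
\[
Kh_o^{i,j}(L) \cong Kh_{o,\red}^{i,j-1}(L) \oplus Kh_{o,\red}^{i,j+1}(L).
\]
This is an integral isomorphism. It comes from the short exact sequence of complexes relating the reduced and unreduced odd complexes, and that sequence splits in the odd theory. Specializing to homological degree $i=1$, we get
\[
Kh_o^{1,j}(L) \cong Kh_{o,\red}^{1,j-1}(L) \oplus Kh_{o,\red}^{1,j+1}(L).
\]

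By Theorem~\ref{thm: maintheorem1}, $Kh_{o,\red}^{1,k}(L)$ vanishes unless $k = 3-\chi(L)$, and in that degree it is $\Z^{p_1(L)}$. The first summand is therefore nonzero exactly when $j-1 = 3-\chi(L)$, that is, $j = 4-\chi(L)$. The second summand is nonzero exactly when $j+1 = 3-\chi(L)$, that is, $j = 2-\chi(L)$.

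These two values of $j$ are distinct, so at most one summand is nonzero for any given $j$. Hence $Kh_o^{1,j}(L)\cong\Z^{p_1(L)}$ for $j\in\{2-\chi(L),\,4-\chi(L)\}$ and $Kh_o^{1,j}(L)=0$ for all other $j$, as claimed.

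The only step that needs care is the grading convention. We must check that the paper's normalization of the reduced quantum grading is the one under which the ORS splitting carries the shifts $j\pm1$, and not, say, a shift by $1$ or $2$ in only one direction. As a check, the same identity in homological degree $0$, applied to the known statement $Kh_{o,\red}^{0,j}(L)=\Z$ only for $j=1-\chi(L)$, gives $Kh_o^{0,j}(L)\cong\Z$ exactly for $j=\pm1-\chi(L)$. This matches the standard unreduced picture. The integrality of the splitting is part of the cited result, so nothing further is needed.
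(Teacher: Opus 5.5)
Your proposal is correct and is exactly the paper's argument: Theorem~\ref{thm: maintheorem1} combined with the ORS splitting $Kh_o^{i,j}(L)\cong Kh_{o,\red}^{i,j-1}(L)\oplus Kh_{o,\red}^{i,j+1}(L)$ in degree $i=1$ places the two copies of $\Z^{p_1(L)}$ at $j=4-\chi(L)$ and $j=2-\chi(L)$. Your degree-$0$ sanity check has an arithmetic slip that does not affect the proof: the splitting gives $\Z$ at $j=1-\chi(L)\pm1$, i.e.\ at $j=-\chi(L)$ and $j=2-\chi(L)$, not at $\pm1-\chi(L)$, and it is these values that match the standard unreduced picture.
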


An immediate consequence of these results is the following characterization of fibered positive links.

\begin{corollary}\label{cor: maincorollary3}
Let $L$ be a non-split positive link with basepoint $p$. Then the following are equivalent: $L$ is fibered; $Kh_{e,\red}^{1,*}(L,p) = 0$; $Kh_{o,\red}^{1,*}(L) = 0$; $Kh_{e}^{1,*}(L) = 0$ and $Kh_{o}^{1,*}(L) = 0$.
\end{corollary}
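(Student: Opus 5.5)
The plan is to reduce the corollary to a statement about $p_1(L)$ and then read off each homological condition from the theorems already stated. The bridge is a known fact about positive diagrams. A positive diagram $D$ of $L$ is homogeneous, and Seifert's algorithm applied to $D$ yields a minimal genus Seifert surface. Moreover, $L$ is fibered if and only if this surface is a fiber surface. By the criterion of Stallings/Gabai, applied via Murasugi sum decompositions of homogeneous diagrams (as used in \cite{KMMS_Khovanov_pos_fib}), this happens exactly when the reduced Seifert graph of $D$ is a tree. Equivalently, $L$ is fibered if and only if $p_1(L)=0$. I will quote this characterization from the literature rather than reprove it. It is exactly the ingredient that makes \eqref{eqn:UnreducedEvenKH_positivity} give the known detection result for $Kh_e^{1,*}$.

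With this in hand, the equivalences follow directly from the computations of the first homological grading.

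\begin{itemize}
\item By Theorem~\ref{thm: maintheorem1}, $Kh_{e,\red}^{1,*}(L,p)$ is concentrated in the single quantum grading $j=3-\chi(L)$, where it equals $\Z^{p_1(L)}$. Hence $Kh_{e,\red}^{1,*}(L,p)=0$ if and only if $p_1(L)=0$.
\item Theorem~\ref{thm: maintheorem1} gives the same group for the reduced odd theory, so $Kh_{o,\red}^{1,*}(L)=0$ if and only if $p_1(L)=0$.
\item By \eqref{eqn:UnreducedEvenKH_positivity}, $Kh_e^{1,*}(L)=0$ if and only if $p_1(L)=0$.
\item By Theorem~\ref{thm: maintheorem2}, $Kh_o^{1,*}(L)=0$ if and only if $p_1(L)=0$.
\end{itemize}

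Each condition is therefore equivalent to $p_1(L)=0$, and so to fiberedness. In particular the last condition, that both $Kh_e^{1,*}(L)$ and $Kh_o^{1,*}(L)$ vanish, holds if and only if $p_1(L)=0$. Since each of the two unreduced vanishing statements is separately equivalent to $p_1(L)=0$, the conjunction in the statement is harmless.

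The only substantive step is the equivalence between fiberedness and the reduced Seifert graph being a tree. Everything else is bookkeeping from Theorems~\ref{thm: maintheorem1} and~\ref{thm: maintheorem2}. Two points need care in that step.

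First, $p_1(L)$ must be well defined, i.e. independent of the chosen positive diagram. The cleanest route is to express $p_1(L)$ as the rank of the homology groups above, which are link invariants. Theorem~\ref{thm: maintheorem1} already implicitly asserts this independence.

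Second, the fibered direction. If the reduced Seifert graph is a tree, then the Seifert surface is a Murasugi sum of fibered $(2,n)$ torus link surfaces, hence fibered. Conversely, if the graph contains a cycle, then the corresponding Murasugi summand is a positive diagram whose reduced Seifert graph has a cycle. That summand is not fibered, which one sees by computing that its Alexander polynomial is not monic. Gabai's theorem then implies that $L$ is not fibered.

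I will cite \cite{KMMS_Khovanov_pos_fib} for this characterization rather than reproduce it.
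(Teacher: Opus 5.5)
Your proposal is correct and follows the paper's proof. Both arguments reduce each vanishing condition to $p_1(L)=0$ using Theorems~\ref{thm: maintheorem1} and~\ref{thm: maintheorem2} and \eqref{eqn:UnreducedEvenKH_positivity}, and then invoke the known fact that a positive link is fibered if and only if its reduced Seifert graph is a tree. The paper cites Stoimenow's observation \cite{Stoimenow_PolynomailValues_Unknottingnumbers} for that fact, whereas you cite \cite{KMMS_Khovanov_pos_fib} and add an optional Murasugi-sum/Gabai sketch.
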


\begin{proof}
The results for reduced and odd homology follow immediately from Theorems~\ref{thm: maintheorem1} and \ref{thm: maintheorem2}, together with A.~Stoimenov's observation \cite{Stoimenow_PolynomailValues_Unknottingnumbers} that a positive link diagram $D$ has $p_1(D) = 0$ if and only if the link it represents is fibered. The result for even Khovanov homology follows from \cite[Theorem~1.2]{KMMS_Khovanov_pos_fib}.
\end{proof}
This shows that both reduced even and odd Khovanov homologies detect fiberedness among positive links.
In contrast, the relation
\[
Kh_e^{i,j}(L) = Kh_{e,\red}^{i,j-1}(L,p) \oplus Kh_{e,\red}^{i,j+1}(L,p)
\]
does not hold in general. As a result, the structure of the unreduced even Khovanov homology in Equation~\eqref{eqn:UnreducedEvenKH_positivity} differs from that of the odd Khovanov homology in Theorem~\ref{thm: maintheorem2}.

In \cite[Theorem~1.1]{KMMS_Khovanov_pos_fib}, the torsion subgroup of unreduced even Khovanov homology of a non-split positive link~$L$ was shown to satisfy
\begin{equation}
\label{eq:torsion-even-khovanov}
\tor(Kh_{e}^{2,j}(L)) \cong
\begin{cases}
\Z_2^{p_1(L)} & \text{if } j = 4 - \chi(L), \\
0 & \text{otherwise,}
\end{cases}
\end{equation}
where $\tor(\cdot)$ denotes the torsion subgroup.
The same argument as in the proof of Corollary~\ref{cor: maincorollary3} shows that the torsion in unreduced even Khovanov homology detects fiberedness for positive links. This property, however, does not extend to unreduced odd Khovanov homology or to reduced even and odd Khovanov homology.

\begin{theorem}
Let $L$ be a non-split positive link with basepoint $p$. Then, for every $j$,
\[
\tor(Kh_{e,\red}^{2,j}(L,p)) = \tor(Kh_{o,\red}^{2,j}(L)) = \tor(Kh_{o}^{2,j}(L)) = 0,
\]
where $\tor(\cdot)$ denotes the torsion subgroup.
\end{theorem}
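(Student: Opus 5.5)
The plan is to reduce all three torsion statements to the freeness of the first reduced groups, using the universal coefficient theorem together with the relation between unreduced and reduced odd Khovanov homology.

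Over a field, the reduced odd and reduced even theories agree with $\Z_2$ coefficients: both reduce to reduced Khovanov homology mod $2$. The proof then proceeds in three steps.

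\textbf{Step 1 (universal coefficients).} The reduced complexes $C_{*,\red}^{\bullet,j}$ are finitely generated free abelian chain complexes. So for each $j$ and each prime $\ell$,
\[
\dim_{\Z_\ell} Kh_{*,\red}^{i,j}(L;\Z_\ell) = \operatorname{rk} Kh_{*,\red}^{i,j} + t_\ell^{i,j} + t_\ell^{i+1,j},
\]
where $t_\ell^{i,j}$ is the number of $\Z_{\ell^k}$ summands in $Kh_{*,\red}^{i,j}$. The differential raises the homological degree, which is why the torsion of $H^{i+1}$ enters here. Torsion in degree $2$ therefore appears in the mod-$\ell$ homology in degree $1$.

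\textbf{Step 2 (the field-coefficient computation).} I would redo the argument behind Theorem~\ref{thm: maintheorem1} with coefficients in an arbitrary field $\mathbb{F}$; the paper's section on general coefficients suggests this is available. The goal is
\[
Kh_{*,\red}^{1,j}(L;\mathbb{F}) \cong \mathbb{F}^{p_1(L)}
\]
in the single grading $j = 3-\chi(L)$ and zero otherwise, independently of the characteristic. The rank of the integral group $Kh^{1,j}_{*,\red}$ is already $p_1(L)$ in that grading and zero elsewhere. Comparing with the formula of Step 1 then forces $t_\ell^{2,j} = 0$ for all $j$ and all $\ell$. This gives $\tor(Kh_{e,\red}^{2,j}) = \tor(Kh_{o,\red}^{2,j}) = 0$.

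The main obstacle is exactly this field statement. Theorem~\ref{thm: maintheorem1} states only integral freeness of $H^1$, whereas we need freeness of $H^2$, which is equivalent to the cokernel of $d^1 \colon C^1 \to C^2$ having torsion-free image up to the kernel.

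My first approach would be to show that, in the combinatorial model of the complex in degrees $0,1,2$ coming from the Seifert graph (the $0$-resolution is the Seifert state), the map $d^1$ restricted to the relevant quantum grading has a basis adapted to the cycle structure. I would try to show it is split injective modulo the image of $d^0$ over $\Z$, which amounts to a unimodular minor argument. This is where the reduced theories differ from the unreduced even theory. The unreduced $\Z_2$ torsion comes from the factor $2$ in $x \mapsto 2x\otimes x$-type maps on a loop; the basepoint kills the second $x$ and removes it, and the odd signs do the same.

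\textbf{Step 3 (unreduced odd homology).} The unreduced odd case follows formally from the identity
\[
Kh_o^{2,j} \cong Kh_{o,\red}^{2,j-1} \oplus Kh_{o,\red}^{2,j+1},
\]
since a direct sum of torsion-free groups is torsion-free.
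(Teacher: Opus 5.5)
Your proposal is essentially the paper's proof. The paper obtains your Step~2 as Proposition~\ref{prop: general coefficent}, namely $Kh^{1,j}_{*,\red}(L,p;R)\cong R^{p_1(L)}$ in the single grading $j=3-\chi(L)$ and zero otherwise, for any coefficient ring $R$, by observing that every ingredient in the proof of Theorem~\ref{thm: maintheorem1} is coefficient-independent; for instance, the reduction to $D'$ only needs the split maps to be injective, which holds over any ring. It then applies the universal coefficient theorem and the odd splitting exactly as in your Steps~1 and~3, so the unimodular-minor detour is unnecessary, and your uniform dimension count over $\Z_\ell$ is slightly cleaner than the paper's case split at $j=1-\chi(L)$ and $j=3-\chi(L)$, where it invokes Lemma~\ref{lem: extreme even KH} and Proposition~\ref{thm: mainproposition7} instead.
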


This is particularly interesting in light of \cite{ebert2025localmathfrakgl11actionoddkhovanov}, where it was shown that arbitrary torsion can occur in odd Khovanov homology, whereas the corresponding question remains open for unreduced even Khovanov homology. One might therefore expect odd Khovanov homology to exhibit richer torsion phenomena in the setting considered here; however, our results show that this is not the case for positive links.

In particular, this shows that results concerning torsion in unreduced even Khovanov homology, such as those in \cite{Mukherjee_Torsion,Shumakovitch_TorsionThinlinks,semi_adequate_torsion,MPSWY_Search_for_torsion}, need not extend to unreduced odd Khovanov homology.

Moreover, $(p,q)$-cables of positive knots with $q \geq p$ exhibit the same concentration behavior, even if they are not strictly positive.

\begin{theorem}\label{thm: maintheorem4}
    If $K$ is a positive knot, then for every $q\geq p\geq2$ the Khovanov homology of its $(p,q)$-cable $K_{p,q}$ fulfills 
    \begin{align*}
    Kh_{*,\red}^{i,j}(K_{p,q})&=0 \,\textrm{ if } \,i<0,\\
    Kh_{*,\red}^{0,j}(K_{p,q})&=\begin{cases}
        \Z &\textrm{ if } \,j=1-\chi(K_{p,q}),\\
    0  &\textrm{ otherwise, } \\
    \end{cases}\\
    Kh_{*,\red}^{1,j}(K_{p,q})&=\begin{cases}
        \Z^{p_1(K)} &\textrm{ if } \,j=3-\chi(K_{p,q}) ,\\
    0  &\textrm{ otherwise, } 
    \end{cases}\\
    \tor(Kh^{2,j}_{*,\red}(K_{p,q})) &=0 \quad \text{for every} \quad j,
\end{align*}
where $*\in\{e,o\}$, $\chi(L)$, $p_1(L)$ and $\tor(\cdot)$ are defined as before. Here, we suppress the base point from the notation of reduced Khovanov homology to avoid unnecessary clutter, since the result is independent of its choice.
\end{theorem}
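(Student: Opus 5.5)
We model the argument on the cable results for unreduced even Khovanov homology in \cite{KMMS_Khovanov_pos_fib}. The plan is to build an explicit diagram of $K_{p,q}$ from a positive diagram $D$ of $K$ and compare its first few homological degrees with those of a positive link.

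\textbf{Step 1: the cable diagram.} Take a positive diagram $D$ of $K$ with writhe $w=w(D)$. Form the $p$-strand blackboard parallel $D^p$, which is a positive diagram. Then insert a braid $\beta$ on $p$ strands so that the total framing gives $K_{p,q}$. Since the blackboard framing is $w$, we need $\beta$ to represent a $(p,q-pw)$-torus-type twist, that is $\beta=(\sigma_1\cdots\sigma_{p-1})^{q-pw}$. When $q-pw\ge 0$ the whole diagram is positive, and the theorem follows from Theorem~\ref{thm: maintheorem1} and the preceding results. The identity $p_1(K_{p,q})=p_1(K)$ then needs a check on the Seifert graph: the cabling replaces each Seifert circle by $p$ parallel circles and each edge by $p$-fold edges, and the torus braid adds a tree-like part. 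We also have $\chi(K_{p,q})=p\chi(K)+(p-1)(q-pw)\cdots$, which we reconcile with the Seifert-genus formula for cables.

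\textbf{Step 2: negative twists.} When $q<pw$ the inserted braid has negative full twists. Write $\beta=\Delta^{-2k}\gamma$ with $\gamma$ a positive braid. The key point is that $q\ge p$ ensures the negative crossings can be absorbed. Concretely, we would use the standard trick of moving the negative full twists $\Delta^{-2}$ through the parallel strands of $D^p$: each positive crossing of $D$ becomes a $p\times p$ grid of crossings. Alternatively, we compare homologies via the long exact skein sequence / Bar-Natan's cone decomposition for a full twist. The plan is to show that the negative crossings only contribute to homological degrees $\le -1$ after shifting, or cancel through Gauss elimination of the complex. The cancellation would be proved by induction on $k$, using the cone over resolving a single negative crossing. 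This gives an isomorphism in homological degrees $i\le 1$, and injectivity modulo torsion in degree $2$, between $Kh_{*,\red}(K_{p,q})$ and the corresponding groups of the positive link $K_{p,q'}$ for $q'$ large. Throughout we check that the quantum shifts match $\chi(K_{p,q})$.

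\textbf{Step 3: odd theory and torsion.} The exact triangle for crossing resolution exists in reduced odd Khovanov homology with sign assignments, so the same inductive argument applies for $*=o$. For $\tor(Kh^{2,j})$, the long exact sequence controls degree $2$ through degrees $1$ and $2$ of a positive link, where torsion vanishes by the positive-link torsion theorem above. In degree $1$ all groups are free, so no torsion can be created through the connecting maps. This last claim requires the connecting map into degree $2$ to have a free cokernel contribution.

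\textbf{Main obstacle.} The main obstacle is Step 2: controlling the degree-$1$ and degree-$2$ groups under the negative twists. This needs the bound $q\ge p$ to guarantee that the quantum degrees of the negative-crossing contributions sit strictly above $3-\chi$ in homological degree $\le 2$. We would first try the approach of \cite{KMMS_Khovanov_pos_fib} of directly identifying the $0$-resolution and the Seifert circles of the cable diagram, and computing the first two chain groups by hand.
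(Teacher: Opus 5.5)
\textbf{Step 1 is fine; the gap is Step 2.} Your Step~1 is the paper's argument for $q\ge pw$, up to two slips. Schubert's formula reads $\chi(K_{p,q})=p\chi(K)-(p-1)q$. Also, $p_1$ is preserved not because edges become $p$-fold. The Seifert smoothing of each $p\times p$ grid gives a path through the $p$ copies of each adjacent Seifert circle, plus one reduced edge between the copies nearest the crossing, and the twist crossings only duplicate path edges.

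Step~2 is the whole content of the range $p\le q<pw$, and none of the mechanisms you list carries it.
\begin{itemize}
\item Sliding negative full twists along the parallel strands never removes them. Absorbing them would require a positive diagram of $K$ of writhe at most $q/p$, which in general does not exist; the paper stresses that these cables need not be positive.
\item An induction on the number $k$ of negative full twists by resolving single crossings does not close up. Resolving one crossing of $\Delta^{-2k}\gamma$ does not return a diagram of the same shape.
\end{itemize}
What is missing is a family stable under the skein step. The paper, following \cite[Section~5]{KMMS_Khovanov_pos_fib}, uses the twisted cables $K_{p,q;m}$: the pattern is $(\sigma_1\cdots\sigma_{p-1})^q$ followed by $m$ negative generators, with $K_{p,q+1;p-1}=K_{p,q}$. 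Thus $q$ is lowered by adding $p-1$ negative crossings one at a time. At each step the oriented resolution of the new crossing is $K_{p,q;m-1}$, shifted by one in quantum degree, which matches the drop of $1-\chi$ by one.

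The real work is showing that the unoriented resolution $D_0$ contributes nothing in homological degrees $\le 1$. Reorienting $D_0$ reverses a strand through the turnback, which flips the signs of many crossings of the $p$-parallel and produces the shift $e=n_-(D_0)-n_-(D)$. Together with a lower bound on the homological support of $Kh(D_0)$, obtained after isotoping the turnback away, this pushes $D_0$ out of degrees $\le 1$. This is where the hypothesis on $q$ enters: Proposition~\ref{prop:twisted_cable} needs $q\ge p+1$, and $q=p$ is recovered via $K_{p,p}=K_{p,p+1;p-1}$.

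Your stated target is also backwards. Contributions in degrees $\le -1$ would contradict $Kh^{i}=0$ for $i<0$; they must be pushed \emph{up}. Moreover, it is the homological degree, not the quantum degree, that must be controlled. Vanishing of $Kh^{1,j}_{*,\red}$ for every $j\neq 3-\chi$ cannot come from quantum separation.

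\textbf{Step 3 has a separate flaw.} Freeness in degree~$1$ does not prevent torsion in degree~$2$: a map $\Z\xrightarrow{2}\Z$ between free groups has torsion cokernel. In the exact sequence $Kh^{1-e}(D_0)\to Kh^{2}(D_1)\to Kh^{2}(D)\to Kh^{2-e}(D_0)$, torsion-freeness passes to $Kh^{2}(D)$ only if the first term vanishes and the last is torsion-free. Both conditions again need the missing analysis of $D_0$.

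The cleaner route is the one the paper uses in Proposition~\ref{prop: general coefficent} and Corollary~\ref{cor: torsion}. Run the degree-one computation over an arbitrary ring $R$; the exact-sequence arguments are coefficient-independent. Then read off $\tor_1^{\Z}(Kh^{2,j}_{*,\red},R)=0$ for all $R$ from the universal coefficient sequence $0\to Kh^{1,j}_{*,\red}\otimes R\to Kh^{1,j}_{*,\red}(\,\cdot\,;R)\to\tor_1^{\Z}(Kh^{2,j}_{*,\red},R)\to 0$.
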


\subsection{Khovanov homology of $0$-Adequate Links}

$0$-adequate links form an important class of links characterized by the property that the graph associated to the all-$0$ resolution of a diagram contains no loops (see Section~\ref{sec: Khovanov homology defn} for more details). For such links, we describe both reduced homology and the associated stable homotopy type at almost-extremal quantum gradings.

\begin{theorem}\label{thm: maintheorem5}
Let $D$ be a $0$-adequate diagram of a non-split link with $n_+$ positive and $n_-$ negative crossings and basepoint $p$. Then
$$
Kh_{e,\red}^{i, n_+ - 2n_- - |s_0 D| + 3}(D,p) \cong Kh_{o,\red}^{i, n_+ - 2n_- - |s_0 D| + 3}(D,p)=
\begin{cases}
\Z^{p_1(D)} & \text{if } i = -n_- +1,\\
0 & \text{otherwise.}
\end{cases}
$$
\end{theorem}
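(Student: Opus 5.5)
We compute directly from the reduced cochain complex at the almost-extremal quantum grading, mirroring the unreduced argument of \cite{almost_extremeKH}. Normalize gradings so that an enhanced state $(s,x)$ with $r(s)$ one-resolutions has homological degree $r(s)-n_-$ and quantum degree $r(s)+\#v_+(x)-\#v_-(x)+n_+-2n_-$. In the reduced theory the marked circle (the one containing $p$) is forced to carry $x_-$, or equivalently we work in the quotient by $x_+$ on it. The extremal quantum grading $n_+-2n_--|s_0D|+1$ is then realized only by the all-$0$ state with every circle labelled $x_-$.

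The grading $j_0=n_+-2n_--|s_0D|+3$ is realized by exactly two kinds of generators.
\begin{enumerate}[label=(\alph*)]
\item The all-$0$ state with exactly one unmarked circle labelled $x_+$. There are $|s_0D|-1$ such generators, in homological degree $-n_-$.
\item States with one crossing changed to $1$ and all circles labelled $x_-$. By $0$-adequacy, each such change merges two distinct circles, so the resulting state has $|s_0D|-1$ circles. There is one generator per crossing, in homological degree $-n_-+1$.
\end{enumerate}
Changing two crossings lowers the quantum degree below what $x_-$ labels allow; precisely, such states need $r+\#v_+-\#v_-$ too large, so they do not appear. The complex in grading $j_0$ is therefore two-term, $C^{-n_-}\to C^{-n_-+1}$. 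Its differential sends the generator with $x_+$ on circle $c$ to the signed sum, over edges of the $0$-state graph $G$ incident to $c$, of the corresponding merge generators. In the even theory the merge $x_+\otimes x_-\mapsto x_-$ has coefficient $\pm1$. In the odd theory it is again a single merge with no exterior-algebra ambiguity, so it also has coefficient $\pm1$, with signs fixed by the chosen edge assignment.

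Up to signs, this map is the transpose of the coboundary $\delta\colon C^0(G)\to C^1(G)$ of the graph $G$, restricted to cochains that vanish at the vertex of the marked circle. Multiple edges between the same pair of circles give parallel edges, and these are exactly what produces $p_1$ of the reduced graph minus the tree contributions. Since $D$ is non-split, $G$ is connected, so after deleting the base vertex the restricted map $\mathbb Z^{V-1}\to\mathbb Z^{E}$ is injective with torsion-free cokernel. Indeed, $\delta$ on all of $C^0(G)$ has kernel the constants and a free cokernel $\cong H^1(G)\cong\mathbb Z^{E-V+1}$, and fixing the base vertex kills the constants without changing the image. This gives vanishing in degree $-n_-$ and $\mathbb Z^{p_1(D)}$ in degree $-n_-+1$, where $p_1(D)=E-V+1$ is the first Betti number of $G$. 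Here I am assuming that the paper's $p_1(D)$ is the cyclomatic number of the $0$-state graph with multiplicities; if it is defined through the reduced graph, the merged parallel edges need separate checking.

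The main obstacle is the signs. For the even theory, any sign assignment makes each square anticommute, but the identification with $\delta$ must hold up to an invertible change of basis, namely a rescaling of each merge generator and each circle generator by $\pm1$. This holds because the two-term complex has exactly one nonzero entry per (vertex, edge) incidence: rescale the edge generators so that the entries at one endpoint are $+1$. The entries at the other endpoint are then $\pm1$, and we need them to equal $-1$ to get $\delta$. Since there are no squares in this two-term piece, nothing forces this, so I would argue directly instead. Every edge column has two nonzero entries $\pm1$, and a unimodularity argument in the style of incidence matrices (signed graphs) applies. The kernel could become nonzero if some cycle were "unbalanced", so I would compute the actual merge signs using the odd and even conventions of \cite{ORS_OddKhovanovHomology} for a merge $x_+\wedge x_-$ and check that both endpoints give opposite signs. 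In the odd case this follows from the relation $a_1\wedge a_2$ with $a_1=a_2$ after merging.
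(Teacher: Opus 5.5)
Your proposal has a genuine gap: the claim that no state with two or more $1$-smoothings contributes in grading $j_0=n_+-2n_--|s_0D|+3$, so that the complex is two-term, is false. It fails exactly when $G_0(D)$ has parallel edges. Suppose crossings $c,c'$ join the same two circles of $s_0D$. Resolving $c$ merges them, and resolving $c'$ then splits the merged circle, so the state has $|s_0D|$ circles with $r=2$. Labelling every circle $x_-$ gives $r+\#v_+-\#v_-=2-|s_0D|$, the same value as your generators of types (a) and (b). More generally, every nonempty set $S$ of crossings joining one fixed pair of circles contributes one generator in homological degree $-n_-+|S|$. Your cokernel $\Z^{E-V+1}$ therefore counts cycles of $G_0(D)$ \emph{with} multiplicities. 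As you suspected, however, the paper defines $p_1(D)$ through $G_0(D)^{\red}$, so your answer is wrong whenever there are multi-edges. For the standard positive trefoil diagram (three parallel edges between two circles, $j_0=4$) you would get $\Z^2$ in bidegree $(1,4)$, whereas the reduced Khovanov homology of the trefoil vanishes there and $p_1(D)=0$. The missing higher generators are precisely what force a cycle in degree $-n_-+1$ to have equal coefficients, up to sign, on parallel crossings. That is the content of Lemma~\ref{lem: even almost extreme technical1} and of the paper's passage to the diagram $D'$ with one crossing per pair of circles.

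Your direct approach can be repaired, and it then gives a self-contained alternative to the paper's skein-sequence induction. The $|s|\ge 1$ part splits as a direct sum over edges $e$ of $G_0(D)^{\red}$. The summand for $e$ is the cube on its $m_e$ parallel crossings with the initial vertex deleted, and all its maps are $\pm1$ (splits acting on the top wedge). Since $e$ has at least one unmarked endpoint $A$, adjoin the component in this summand of the differential of the degree-$(-n_-)$ generator with $x_+$ on $A$. This completes the summand to a full cube of unit maps with $d^2=0$, which is acyclic. So each summand has homology $\Z$, concentrated in degree $-n_-+1$ and generated by a signed sum of the singleton generators. Everything then reduces to a map $\Z^{V-1}\to\Z^{E_{\red}}$ with entries $\pm1$ at the two endpoints of each edge.

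Your sign discussion goes astray at this point. In the even theory the two endpoint entries of a merge column are \emph{equal}, not opposite, because $\Lambda_e$ is commutative and both terms use the same cube edge. This is exactly why the unreduced answer depends on bipartiteness, so the check you propose would fail. It is also unnecessary. Root a spanning tree of $G_0(D)^{\red}$ at the marked circle. In tree-edge coordinates, each unmarked vertex maps to $\pm$ its parent edge plus terms on edges to its children. Ordering vertices by depth, the matrix is therefore triangular with diagonal entries $\pm1$. Hence the map is a split injection for any choice of signs. This gives $0$ in degree $-n_-$ and $\Z^{E_{\red}-V+1}=\Z^{p_1(D)}$ in degree $-n_-+1$, in both the even and odd theories.
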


The following corollary follows immediately from the preceding theorem and the defining properties of the even and odd Khovanov stable homotopy types in \cite{LS_StableHomotopyType,SSS_OddStableHomotopyType}. See also Section~\ref{sec: Extreme reduced even Khovanov homology} for a brief overview of these stable homotopy types.

\begin{corollary}\label{thm: maincorollary6}
Let $D$ be a $0$-adequate diagram of a non-split link with $n_+$ positive and $n_-$ negative crossings and basepoint $p$. Then the reduced even and odd Khovanov stable homotopy types at quantum grading $j = n_+ - 2n_- - |s_0D| + 3$ are both wedge sums of spheres of the form $(S^1, 1-n_-)$.
\end{corollary}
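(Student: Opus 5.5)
The plan is to deduce the corollary from Theorem~\ref{thm: maintheorem5} using two structural facts about the Lipshitz--Sarkar and Sarkar--Seed--Szab\'o constructions. First, the reduced even (resp.\ odd) Khovanov stable homotopy type at a fixed quantum grading $j$ is a CW spectrum, or a formal desuspension of a CW complex, whose cellular chain complex is the reduced Khovanov complex in grading $j$. Hence its reduced cohomology is the reduced Khovanov homology $Kh^{*,j}_{e,\red}(D,p)$, respectively $Kh^{*,j}_{o,\red}(D,p)$. Second, the cells come from the enhanced states of quantum grading $j$, and their dimension is the homological grading (up to a global shift by $-n_-$).

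I would first set $j = n_+ - 2n_- - |s_0D| + 3$ and apply Theorem~\ref{thm: maintheorem5}. It gives that the cohomology is $\Z^{p_1(D)}$, free, concentrated in the single degree $1-n_-$, and zero elsewhere.

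Next I would argue that a spectrum with this cohomology is a wedge of $p_1(D)$ copies of the sphere spectrum $(S^1, 1-n_-)$, in the paper's notation $(X, k)$ for the formal desuspension $\Sigma^{k-?}X$. I would do this in one of two ways.

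\begin{enumerate}
\item Use the universal coefficient theorem. Cohomology that is free and concentrated in one degree forces the homology to be free and concentrated in that same degree. A connective spectrum whose integral homology is free and concentrated in a single degree $d$ is, by the stable Hurewicz theorem, a Moore spectrum $M(\Z^r, d)$. Since $\Z^r$ is free, this Moore spectrum is $\bigvee_r S^d$.
\item More concretely, inspect the cells. At this quantum grading the relevant enhanced states lie in only two adjacent homological gradings, namely the $0$-resolution level and the next level. The CW complex therefore has cells in only two consecutive dimensions, with the attaching maps of degree one more than the bottom cells. A two-stage complex is determined up to homotopy by its cellular boundary map, since maps between wedges of spheres in adjacent dimensions are stably given by integer matrices. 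A complex with free cokernel and injective boundary then splits as a wedge of spheres and contractible pieces.
\end{enumerate}

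The same argument applies verbatim to the odd stable homotopy type, because it also carries a CW structure whose cellular chain complex is the reduced odd Khovanov complex.

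The main obstacle is bookkeeping rather than substance. I need to make sure that the reduced stable homotopy types of \cite{LS_StableHomotopyType,SSS_OddStableHomotopyType} indeed have the reduced complexes as cellular chain complexes, and I need to match the degree normalization so that the spheres appear as $(S^1, 1-n_-)$ and not with some other shift. The homotopy-theoretic step itself is standard, once Theorem~\ref{thm: maintheorem5} gives free homology in a single degree.
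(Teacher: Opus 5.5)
Your route (1) is correct and is essentially the paper's argument. The paper deduces the corollary immediately from Theorem~\ref{thm: maintheorem5} together with the fact that the (co)homology of the reduced Lipshitz--Sarkar and Sarkar--Scaduto--Stoffregen spectra is reduced Khovanov homology. It leaves implicit exactly the step you supply: for a finite CW spectrum, free cohomology $\Z^r$ in a single degree $d$ forces, by universal coefficients, homology $\Z^r$ in degree $d$ and zero elsewhere, and then Hurewicz plus Whitehead give $\bigvee_r S^d$.

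Route (2), however, should be discarded, because its premise is false in general. The reduced complex in quantum grading $j=n_+-2n_--|s_0D|+3$ is not supported in the two lowest homological degrees once $G_0(D)$ has parallel edges. Suppose $s$ $1$-smooths $k\ge 2$ of the crossings joining the same two circles of $s_0D$, and $0$-smooths everything else. Then $|sD|=|s_0D|+k-2$. The tensor product of all circles of $sD$, which contains the basepoint circle, has reduced quantum grading exactly $j$ and homological degree $k-n_-$. This is why the paper's proof of Theorem~\ref{thm: maintheorem5} treats the range $i\ge -n_-+2$ separately. So the CW spectrum genuinely has cells in more than two dimensions, and the ``two-stage complex classified by an integer matrix'' argument does not apply. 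Only the homological argument of route (1) works in general.

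On your normalization worry: with the paper's convention $\widetilde H^*((X,n))=\widetilde H^{*-n}(X)$, the object $(S^1,1-n_-)$ has cohomology in degree $2-n_-$, whereas Theorem~\ref{thm: maintheorem5} puts the homology in degree $1-n_-$. What the argument actually yields is $p_1(D)$ copies of $(S^1,-n_-)$, i.e.\ the sphere with its cell in degree $1-n_-$. The shift written in the statement appears to be off by one, so you should not try to force your bookkeeping to reproduce it.
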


These results show that, in the reduced setting, the almost extremal homology and stable homotopy types are independent of combinatorial complications such as the bipartiteness of the associated graph $G_0(D)$, which contrasts with previous findings in the unreduced case \cite{almost_extremeKH}. 

As a consequence of Theorem~\ref{thm: maintheorem5} and the fact that positive links are $0$-adequate, we obtain the following proposition.

\begin{proposition} \label{thm: mainproposition7}
Let $L$ be a non-split positive link, with a base point $p$. Then
\begin{equation*}
Kh_{e,\red}^{i,\,3-\chi(L)}(L,p) \cong Kh_{o,\red}^{i,\,3-\chi(L)}(L) =
\begin{cases}
\Z^{p_1(L)} & \text{if } i = 1,\\
0 & \text{otherwise.}
\end{cases}
\end{equation*}
where $\chi(L)$ and $p_1(L)$ are defined as before.
\end{proposition}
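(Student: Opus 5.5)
The plan is to specialize Theorem~\ref{thm: maintheorem5} to a positive diagram. Let $D$ be a positive diagram of $L$. Since $L$ is non-split, $D$ may be taken connected: a disconnected positive diagram of a non-split link can be made connected by Reidemeister~II moves, which would create negative crossings, so instead we use the standard fact that a non-split link's positive diagrams are connected. Every positive diagram is $0$-adequate, because its all-$0$ resolution is the oriented (Seifert) resolution. In that resolution each crossing joins two distinct Seifert circles, so $G_0(D)$ has no loops. Thus Theorem~\ref{thm: maintheorem5} applies with $n_-=0$ and $n_+=n$, the number of crossings. It gives
\[
Kh_{*,\red}^{i,\, n - |s_0D| + 3}(D,p) \cong \begin{cases}\Z^{p_1(D)} & i=1,\\ 0 & \text{otherwise,}\end{cases}
\]
with the even and odd groups isomorphic to each other.

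Next we translate the quantum grading. Since $s_0D$ is the Seifert state, $|s_0D|=s(D)$ is the number of Seifert circles. Seifert's algorithm on a positive diagram yields a minimal genus Seifert surface; this is the classical result for positive diagrams, which is also implicit in the $j=1-\chi(L)$ normalization used earlier. Hence $\chi(L)=s(D)-n$, and therefore
\[
n-|s_0D|+3 = 3-\chi(L).
\]

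It remains to check that $p_1(D)$ in Theorem~\ref{thm: maintheorem5} agrees with $p_1(L)$ as defined via the reduced Seifert graph. For a positive diagram, $G_0(D)$ is exactly the Seifert graph: its vertices are the Seifert circles and its edges are the crossings. The first cyclomatic number used in Theorem~\ref{thm: maintheorem5} should be that of the reduced graph, with multiple edges collapsed. I expect this identification of conventions to be the only real point needing care. If $p_1(D)$ were defined there using the unreduced $G_0(D)$, the statement would be inconsistent with Theorem~\ref{thm: maintheorem1}. So I would confirm from the definition in Section~\ref{sec: Khovanov homology} that both use the reduced graph, and note that $p_1(L)$ is independent of the positive diagram chosen, as asserted in Theorem~\ref{thm: maintheorem1}.

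Finally, the odd reduced group does not depend on the basepoint, which justifies writing $Kh_{o,\red}(L)$. Combining the three steps gives the proposition. As a consistency check, the $i=1$ case recovers Theorem~\ref{thm: maintheorem1} at $j=3-\chi(L)$.
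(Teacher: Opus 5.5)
Your proposal is correct and follows the paper's route. The paper obtains this proposition by specializing Theorem~\ref{thm: maintheorem5} to a positive (hence $0$-adequate) diagram with $n_-=0$, using $\chi(L)=|s_0D|-n$ and that $G_0(D)$ is the Seifert graph; Definition~\ref{def: cyclomatic number} does use the reduced graph $G_0(D)^{\red}$, so your convention check goes through. Your detour about Reidemeister~II moves is unnecessary: a disconnected diagram always represents a split link, so every diagram of a non-split $L$ is connected, and Theorem~\ref{thm: maintheorem5} only requires the link to be non-split.
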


\subsection*{Outline}
The paper is organised as follows. In Section~\ref{sec: Khovanov homology}, we recall the definitions of even and odd Khovanov homology. Sections~\ref{sec: Reduced even Khovanov homology of positive links} and \ref{sec: Extreme and almost extreme reduced even Khovanov homology} develop the theory for even Khovanov homology and prove Theorems~\ref{thm: maintheorem1} and \ref{thm: maintheorem5}. Sections~\ref{sec: Reduced odd Khovanov homology of positive links} and \ref{sec: Extreme and almost extreme reduced odd Khovanov homology} establish the corresponding results for odd Khovanov homology. In Section~\ref{sec: Reduced Khovanov homology over other coefficent rings}, we extend the preceding arguments to general coefficient rings, which allows us to establish the results concerning torsion.
Finally, Section~\ref{sec: Reduced Khovanov homology of cables} studies reduced Khovanov homology of cables, with results for both even and odd Khovanov homology.

%% file: Acknowledgement.tex
\subsection*{Acknowledgement}
I would like to thank the Max-Planck Institute for Mathematics in Bonn for its hospitality and financial support. I would like to thank Léo Schelstraete for explaining subtle details in the construction of odd Khovanov homology and for pointing out errors in earlier versions of the proofs in this article. I would like to thank Tomás Mejía Gómez, Shun Ming Samuel Lee and Claudius Zibrowius for helpful suggestions that improved the exposition.

\subsection*{AI declaration}
AI was used to refine writing, particularly to correct grammar and improve sentence structure, and not for any other purpose.

%% file: Intro_OddKH.tex
\section{Even and odd Khovanov homology} \label{sec: Khovanov homology}

In this section, we review the definitions and basic properties of \emph{even} and \emph{odd Khovanov homology}, along with their reduced versions. Our exposition closely follows \cite{SSS_OddStableHomotopyType}.

\subsection{Algebraic Preliminaries}

Before defining the even and odd Khovanov chain complexes, we introduce two $\mathbb{Z}$-modules that form the algebraic input for both theories. These modules are defined for an arbitrary finite set $S$; in later sections, $S$ will typically be a finite collection of circles in $\mathbb{R}^2$.

Let $S$ be a finite set, and let $T(S)$ denote the tensor algebra generated by $S$ over $\mathbb{Z}$.

\begin{definition}[Even algebra]\label{def: even module}
	Let $I_e \subset T(S)$ be the two-sided ideal generated by
	\[
	x \otimes x \quad \text{and} \quad x \otimes y - y \otimes x,
	\qquad x,y \in S.
	\]
	The \emph{even algebra} associated to $S$ is defined by
	\[
	\Lambda_e(S) := T(S)/I_e.
	\]
\end{definition}

\begin{definition}[Odd algebra]\label{def: odd module}
	Let $I_o \subset T(S)$ be the two-sided ideal generated by
	\[
	x \otimes x \quad \text{and} \quad x \otimes y + y \otimes x,
	\qquad x,y \in S.
	\]
	The \emph{odd algebra} associated to $S$ is defined by
	\[
	\Lambda_o(S) := T(S)/I_o.
	\]
\end{definition}

\begin{remark}
	\begin{enumerate}
	\item The algebra $\Lambda_e(S)$ is the symmetric algebra on $S$ modulo the ideal generated by the squares of the generators. The algebra $\Lambda_o(S)$ is the exterior algebra generated by $S$. 
	
	\item If $S$ is a singleton, then $\Lambda_e(S) \cong \mathbb{Z}[x]/(x^2)$. More generally, for a finite set $S$, we have
	\[
	\Lambda_e(S) \cong \left(\mathbb{Z}[x]/(x^2)\right)^{\otimes |S|}.
	\]
	
	\end{enumerate}
	
\end{remark}

\subsection{Even and Odd Khovanov homology}\label{sec: Khovanov homology defn}
In this section, we recall the definitions of even and odd Khovanov homology.

\begin{definition}[States of a diagram]
	Let $D$ be an oriented link diagram with $n$ ordered crossings $\crossing(D) = \{c_1, \dots, c_n\}$. A \emph{state} of a diagram \(D\) is a function  $s \colon \crossing(D) \to \{0,1\},$ assigning to each crossing of \(D\) a value \(0\) or \(1\), called a \emph{marker}.
\end{definition}

Let $\mathcal{S}(D)$ denote the set of all $2^n$ possible states of the diagram $D$. Given $s \in \cS(D)$, the \emph{resolution} $sD$ corresponds to the diagram obtained after smoothing every crossing $c_i \in \crossing(D)$ according to its marker $s(c_i)$ following Figure~\ref{fig: smoothing}. The resolution $sD$ does not contain any crossings and consists of a collection of $|sD|$ circles embedded in the plane.

\begin{definition}[Seifert state and Seifert resolution]
	The \emph{Seifert state} is the state in which each positive crossing is assigned the marker $0$ and each negative crossing is assigned the marker $1$.
	The \emph{Seifert resolution} is the resolution of a diagram corresponding to the Seifert state.
\end{definition}

\begin{figure}[h!]
	\centering
	\includegraphics[width=.50\textwidth]{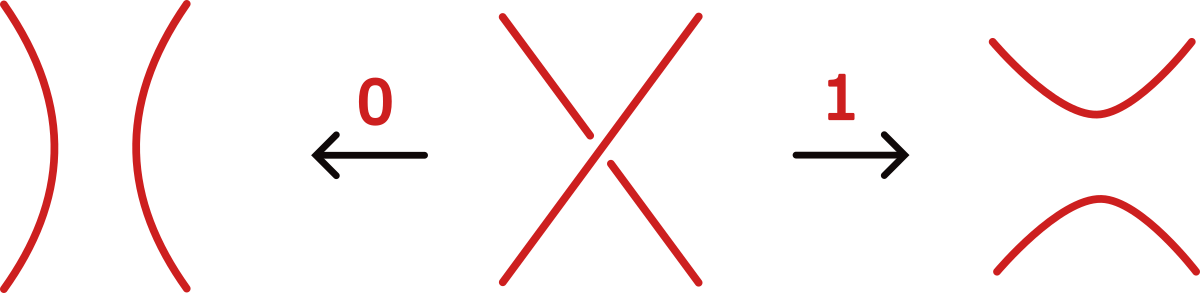}
	\caption{Smoothing.}
	\label{fig: smoothing}
\end{figure}

\begin{definition}[State graph]
	Given a state $s\in \cS(D)$, the \textit{state graph} $G_s(D)$ is a planar graph obtained by assigning a vertex to each circle in $sD$ and adding an edge between two vertices for each crossing connecting the corresponding circles in the original diagram.
\end{definition} 

The graph $G_{s}(D)$ may contain loops and/or multi-edges (i.e. edges connecting the same pair of vertices, also called parallel edges). 

\begin{definition}[Reduced graph]
Given a graph $G$, its associated reduced graph $G^{\text{red}}$ is the graph obtained from $G$ by identifying parallel edges.
\end{definition}

Let $s_0$ and $s_1$ be the states assigning a $0$-marker and a $1$-marker to every crossing, respectively, with $G_0(D)$ and $G_1(D)$ their corresponding state graphs. The state graph of the Seifert state is called the \emph{Seifert graph}. For a positive link, the Seifert graph coincides with the state graph $G_0(D)$.

\begin{definition}[Adequate diagrams]
	A link diagram $D$ is called \textit{$0$-adequate} if the state graph $G_0(D)$ contain no loops,  \textit{$1$-adequate} if the state graph $G_1(D)$ contain no loops and \textit{adequate} if both $G_0(D)$ and $G_1(D)$ contains no loops.
\end{definition}

\begin{example}
	Every positive diagram is \(0\)-adequate.
\end{example}

\begin{definition}[Cyclomatic number] \label{def: cyclomatic number}
	The \textit{cyclomatic number }$p_1(G)$ of a graph $G$ is the minimal number of edges that one must delete to transform $G$ into a collection of disjoint tree graphs. It can be computed as $p_1(G) = e-v+k$, where $v, e$ and $k$ denote the number of vertices, edges and connected components of $G$, respectively. 
	
	Given a link diagram $D$, we define its cyclomatic number $p_1(D)$ as the cyclomatic number of the reduced $G_0(D)^{\red}$ graph associated with $D$. 
\end{definition}

When defining odd Khovanov homology, in addition to fixing an ordering of the crossings, we must equip the oriented link diagram $D$ with an additional decoration. Specifically, we choose an \emph{orientation of each crossing}, i.e., an assignment of an arrow to every crossing of $D$, as illustrated in Figure~\ref{fig: orientation of crossings}.

\begin{figure}[h!]
	\centering
	\includegraphics[width=.10\textwidth]{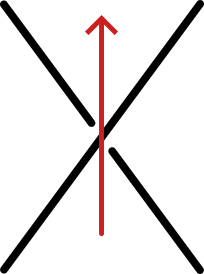}
	\caption{Orientation of crossings.}
	\label{fig: orientation of crossings}
\end{figure}

Given a state $s \in \cS(D)$ of a decorated diagram $D$ with $n$ ordered crossings, the \emph{resolution} $sD$ is obtained by smoothing each crossing $c_i \in \crossing(D)$ according to the marker $s(c_i)$, as illustrated in Figure~\ref{fig: smoothing_oriented}. The resolution $sD$ of a decorated diagram contains no crossings. It consists of a collection of $|sD|$ disjoint circles embedded in the plane, together with $n$ oriented arcs placed between the circles at the points corresponding to the original crossings.

\begin{figure}[h!]
	\centering
	\includegraphics[width=.50\textwidth]{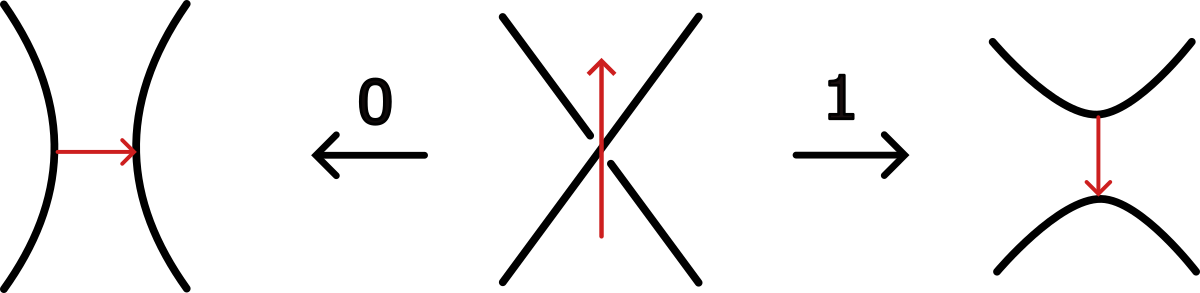}
	\caption{Smoothing of an oriented crossing.}
	\label{fig: smoothing_oriented}
\end{figure}

\begin{remark}
We will not distinguish, in our notation, between states or their resolutions for decorated and undecorated diagrams. When discussing even Khovanov homology, all diagrams are understood to be undecorated, whereas in the odd Khovanov homology setting, diagrams are assumed to be decorated.
\end{remark}

The set of states $\mathcal{S}(D)$ is naturally identified with $\mathbf{2}^n \coloneq \{0,1\}^n$, the vertices of the $n$-dimensional unit cube. This identification is defined by associating to $s = (s_1, \ldots, s_n) \in \{0,1\}^n$ the state $s(c_j) = s_j$ for each crossing $c_j$. 
We order the set $\{0,1\}$ by declaring $0<1$, which induces the product partial order on
$\mathbf{2}^n$. The cube is graded by
\[
|s|=\sum_i s_i.
\]
We write $s\leq_k s'$ if $s\leq s'$ and $|s'|-|s|=k$. Two states $s$ and $s'$ are connected by an edge in the $n$-dimensional unit cube if and only if either $s\leq_1 s'$ or $s'\leq_1 s$. 

\begin{remark}
A categorical structure induced on $\mathbf{2}^n$ by the partial order, where $\Hom_{\underline{2}^n}(s,s')$ has a single element if $s \leq s'$ and is empty otherwise.  Write $\phi_{s,s'}$ for the unique morphism $s \to s'$ if it exists. We will denote this by $\underline{2}^n$ when we regard cube as a catgeory, rather than as a poset or simply as a set.
\end{remark}

\subsubsection{Even Khovanov functor}

With the cube category $\underline{2}^n$ in place, we define a functor
\[
\fF_e \colon \underline{2}^n \longrightarrow \mathsf{Ab}
\]
from the category $\underline{2}^n$ to the category of abelian groups $\mathsf{Ab}$, using the resolutions associated to the vertices of the $n$-dimensional cube.

\noindent\textbf{On objects.}
For a state $s\in \underline{2}^n$, set
\[
\fF_e(s) \coloneq \Lambda_e(sD).
\]
\noindent\textbf{On morphisms.}
It suffices to define $\fF_e$ on morphisms $\phi_{s,s'}$ with $s\leq_1 s'$, since morphisms between arbitrary states are obtained by composition along paths in the cube. For this assignment to be well defined, the resulting maps must commute around all $2$-dimensional faces of the cube.

Let $\phi_{s,s'}$ denote the unique morphism with $s\leq_1 s'$. The resolutions $sD$ and $s'D$ differ locally by either a split of a circle or a merge of two circles.

\begin{enumerate}
	\item Suppose $\phi_{s,s'}$ corresponds to a split, so that a circle $a\in sD$ splits into two circles $a_1,a_2\in s'D$, with all other circles naturally identified. Define
	\[
	\fF_e(\phi_{s,s'})(y) \coloneq (a_1+a_2)\otimes y,
	\]
	where $\Lambda_e(sD)$ is viewed as a subalgebra of $\Lambda_e(s'D)$ by identifying $a$ with either $a_1$ or $a_2$.
	
	\item Suppose instead that $\phi_{s,s'}$ corresponds to a merge, so that two circles $a_1,a_2\in sD$ merge into a single circle $a\in s'D$, again with all other circles naturally identified. Define
	\[
	\fF_e(\phi_{s,s'}) \colon \Lambda_e(sD) \longrightarrow \Lambda_e(s'D)
	\]
	to be the homomorphism sending both $a_1$ and $a_2$ to $a$ and acting as the identity on all other generators.
\end{enumerate}

The assignments above strictly commute around every $2$-dimensional face of the cube. Consequently, $\fF_e$ is a well-defined functor on the cube category $\underline{2}^n$.

\subsubsection{Odd Khovanov functor}

We now define the \emph{odd Khovanov functor}, which differs from the even functor $\fF_e$ only by sign choices on the morphisms.

As in the even case, we attempt to define a functor
\[
\fF'_o \colon \underline{2}^n \longrightarrow \mathsf{Ab}
\]
using the resolutions associated to the vertices of the $n$-dimensional cube. Note that a resolution $sD$ is not merely a collection of $|sD|$ disjoint circles embedded in the plane. Rather, it also carries oriented arcs between the circles at the locations corresponding to the original crossings. This additional structure will play a crucial role in the definition of the functor.

\noindent\textbf{On objects.}
For a state $s\in \underline{2}^n$, set
\[
\fF'_o(s) \coloneq \Lambda_o(sD).
\]

\noindent\textbf{On morphisms.}
As before, it suffices to define $\fF_o$ on morphisms $\phi_{s,s'}$ with $s\leq_1 s'$, since all other morphisms are obtained by composition. The resolutions $sD$ and $s'D$ differ locally by either a split of a circle or a merge of two circles.

\begin{enumerate}
	\item Suppose $\phi_{s,s'}$ corresponds to a split, so that a circle $a \in sD$ splits into two circles $a_1, a_2 \in s'D$, with all other circles naturally identified. Let the oriented arc at the crossing where $s$ and  $s'$ differ (resolved with the $1$ marker) go from $a_1$ to $a_2$. Then define
	\[
	\fF'_o(\phi_{s,s'})(y) \coloneq (a_1 - a_2) \otimes y,
	\]
	where $\Lambda_o(sD)$ is embedded in $\Lambda_o(s'D)$ by sending $a$ to either $a_1$ or $a_2$.
	
	\item Suppose instead that $\phi_{s,s'}$ corresponds to a merge, so that two circles $a_1,a_2\in sD$ merge into a single circle $a\in s'D$, again with all other circles naturally identified. Define
	\[
	\fF'_o(\phi_{s,s'}) \colon \Lambda_o(sD) \longrightarrow \Lambda_o(s'D)
	\]
	to be the homomorphism sending both $a_1$ and $a_2$ to $a$ and acting as the identity on all other generators.
\end{enumerate}

Unlike in the even case, the edge maps of $\fF'_o$ do not strictly commute around $2$-dimensional faces of the cube; instead, they commute only up to sign. Thus, this naive assignment does not define a functor. However, one can consistently choose signs for each edge of the $n$-dimensional cube so that, after multiplying by these signs, the resulting maps define a functor.

The $2$-dimensional configurations can be divided into four types (with unoriented arcs being orientable in either direction).

\begin{figure}[ht]
\centering
\begin{minipage}[t]{0.68\textwidth}
    \centering
	\vspace{0pt}
    \begin{subfigure}{0.45\linewidth}
        \centering
        \includegraphics[width=0.35\linewidth]{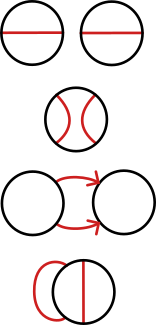}
        \caption{Type A}
        \label{fig:1}
    \end{subfigure}
    \hfill
    \begin{subfigure}{0.45\linewidth}
        \centering
        \includegraphics[width=0.67\linewidth]{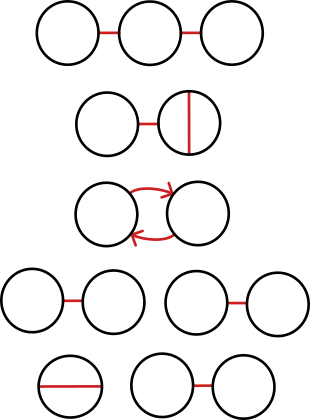}
        \caption{Type C}
        \label{fig:2}
    \end{subfigure}

\end{minipage}
\hfill
\begin{minipage}[t]{0.28\textwidth}
	\centering
	\vspace{1cm}
   	\begin{subfigure}{0.45\linewidth}
        \centering
        \includegraphics[width=0.5\linewidth]{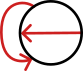}
        \caption{Type X}
        \label{fig:3}
    \end{subfigure}

    \vspace{0.5em}

    \begin{subfigure}{0.45\linewidth}
        \centering
        \includegraphics[width=0.5\linewidth]{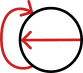}
        \caption{Type Y}
        \label{fig:4}
    \end{subfigure}
\end{minipage}

\caption{Types of 2-dimensional configurations.}
\label{fig:SquareTypes}
\end{figure}

The behavior of the edge maps of $\fF'_o$ around $2$-dimensional faces of the cube depends on the face type. Specifically:
\begin{itemize}
	\item For type-A faces, $\fF'_o$ commutes only after multiplication by $-1$.
	\item For type-C faces, $\fF'_o$ commutes strictly, without any sign adjustment.
	\item For type-X and type-Y faces, $\fF'_o$ commutes both strictly and up to a
	sign.
\end{itemize}

To systematically correct the signs, define cochains
\[
\psi_X, \psi_Y \in C^2_{\mathrm{cell}}([0,1]^n; \mathbb{Z}_2)
\]
by setting
\[
\psi_X(f) =
\begin{cases}
	1, & \text{if $f$ is a type-A or type-X face},\\
	0, & \text{if $f$ is a type-C or type-Y face},
\end{cases}
\qquad
\psi_Y(f) =
\begin{cases}
	1, & \text{if $f$ is a type-A or type-Y face},\\
	0, & \text{if $f$ is a type-C or type-X face}.
\end{cases}
\]

\begin{definition}[Edge assignment]\label{def:edge-assignment}
	A \emph{type-X} (respectively, \emph{type-Y}) \emph{edge assignment} for a diagram $D$ with oriented crossings is a cochain
	\[
	\epsilon \in C^1_{\mathrm{cell}}([0,1]^n; \mathbb{Z}_2)
	\]
	satisfying
	\[
	\delta \epsilon = \psi_X \quad (\text{respectively, } \delta \epsilon = \psi_Y).
	\]
\end{definition}

\begin{theorem}[\cite{ORS_OddKhovanovHomology}, Lemma~1.2]\label{thm:edge-assignment}
	Every diagram $D$ admits both type-X and type-Y edge assignments.
\end{theorem}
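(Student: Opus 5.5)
The plan is to solve $\delta\epsilon = \psi$ by showing that $\psi$ is a cocycle, $\delta\psi_X = 0$ and $\delta\psi_Y = 0$ in $C^3_{\mathrm{cell}}([0,1]^n;\Z_2)$. Since the cube $[0,1]^n$ is contractible, $H^2_{\mathrm{cell}}([0,1]^n;\Z_2)=0$, and so every $2$-cocycle is a coboundary. Thus the whole proof reduces to a local statement: for every $3$-dimensional face $F$ of the cube, the number of its six $2$-faces of type A or type X (respectively, A or Y) is even.

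\textbf{Step 1 (reduction to three crossings).} A $3$-face of the cube corresponds to fixing the markers at all but three crossings $c_i,c_j,c_k$. The six $2$-faces of $F$ then only involve the resolution in which the other crossings are smoothed, together with the three oriented arcs at $c_i,c_j,c_k$. The type of each $2$-face (A, C, X, Y) depends only on the circles touched by the two relevant arcs and on how those arcs sit on them. So it suffices to check the parity condition for every planar configuration of circles with three oriented arcs attached. Circles not touched by the arcs play no role and can be discarded.

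\textbf{Step 2 (case analysis).} I would classify the configurations by the number of circles in the starting resolution met by the arcs (at most six, really at most four after merging considerations), and by how the arcs are arranged. Disjoint arcs on distinct circles yield type-C faces, which contribute nothing. The genuinely interesting cases are those where several arcs lie on a single circle, which produce type A, X, Y faces. For each configuration I would list the six faces of the cube $\{0,1\}^3$ and determine their types.

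A cleaner alternative, which I would attempt first, avoids enumeration. Let $\eta(f)\in\Z_2$ be the sign by which the naive maps $\fF'_o$ anticommute around $f$. For type X set $\eta=1$ exactly on type A and X faces; for type Y the same with Y in place of X. Observe that $\delta\eta=0$ should follow from compatibility of the composite maps over the $3$-cube: going around the boundary of $F$, each edge map appears twice and cancels. This argument works whenever the relevant composites are nonzero. The degenerate X/Y faces, where both composites vanish so that any sign is allowed, are exactly where the abstract argument fails. There one needs the specific convention chosen in $\psi_X$ or $\psi_Y$.

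\textbf{Main obstacle.} The hard step is precisely these degenerate configurations, in which some $2$-faces of the $3$-cube are of type X or Y. I expect to handle them by direct enumeration of the few planar configurations of three arcs in which a type X/Y face appears, namely those where two arcs join the same pair of circles or the same circle in a ladybug-like pattern. In each of these I would verify that the count of A-or-X faces (resp. A-or-Y faces) is even. Once parity is checked in all cases, contractibility of the cube gives the edge assignments $\epsilon$.
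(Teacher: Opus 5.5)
Your reduction is correct and is the standard one. The paper gives no proof of its own and simply cites ORS, where existence likewise comes from $\psi_X,\psi_Y$ being cocycles on a contractible cube. Since $H^2_{\mathrm{cell}}([0,1]^n;\Z_2)=0$, everything reduces to a parity condition on the six $2$-faces of each $3$-face.

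However, your proposal stops exactly where the lemma has content. For $3$-faces containing a face of type X or Y you only announce an enumeration, and that check is not a formality. Take three chords $\gamma_1,\gamma_2,\gamma_3$ on one circle, with $\gamma_1,\gamma_3$ inside and $\gamma_2$ outside and interleaved with both. Then:
\begin{itemize}
\item the faces $\{1,2\}$ and $\{2,3\}$ at the initial vertex are ladybugs;
\item the face $\{1,3\}$ there (two non-interleaved splits of one circle) is of type A;
\item the faces $\{1,2\}$ with $\gamma_3$ resolved and $\{2,3\}$ with $\gamma_1$ resolved are split/merge faces of type C;
\item the face $\{1,3\}$ with $\gamma_2$ resolved consists of two parallel arcs between the same two circles, so it is of type A or C depending on the orientations of $\gamma_1$ and $\gamma_3$.
\end{itemize}
The two cocycle conditions then demand that the two ladybug faces have different types (one X, one Y) when that last face is of type C, and the same type when it is of type A. Checking this needs the precise rule distinguishing X from Y in terms of arc orientations. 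The same kind of check is needed for every configuration of three arcs containing a ladybug face. This includes a ladybug plus a third arc elsewhere, on the same circle, or joining it to another circle. It also includes three arcs between two circles, where the ladybug appears only after resolving one crossing. Until that list is written down and verified, you have reformulated the statement rather than proved it.

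Two further points need fixing.

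First, your claim that disjoint arcs on distinct circles give type-C faces is false when both arcs are splits. Since $(a_1-a_2)\wedge(b_1-b_2)=-(b_1-b_2)\wedge(a_1-a_2)$, such a face anticommutes and is of type A. Only merge/merge and merge/split faces on disjoint circles are of type C.

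Second, your argument for $3$-faces without ladybugs is not right as stated. The edge maps are not invertible, so nothing ``cancels'' around the boundary. The correct version uses the six monotone paths from the initial to the terminal vertex of $F$. Consecutive paths differ by one face relation, so going around gives $P=(-1)^{\delta\eta(F)}P$ for the composite $P$. Then $\delta\eta(F)=0$ follows if $P\neq 0$, because $\Lambda_o$ is torsion-free.

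You also only assert that $P\neq 0$ whenever no face of $F$ is of type X or Y; this needs a proof. One route has two steps:
\begin{itemize}
\item With only three $1$-handles, positive genus of the traced cobordism forces two of the arcs to form a ladybug, either at the initial vertex of $F$ or after resolving the third crossing.
\item A genus-zero composite is nonzero. Modulo $2$ the odd edge maps coincide with the even ones, and in the even theory such a cobordism sends $1$ to a nonzero element.
\end{itemize}
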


Fix a choice of edge assignment $\epsilon$ (either type-X or type-Y). For each edge $\phi_{s,s'}$ in the cube, define the signed edge map
\[
\fF_o(\phi_{s,s'}) \coloneqq (-1)^{\epsilon(\phi_{s,s'})} \fF'_o(\phi_{s,s'}).
\]
With these sign corrections, the assignment $\fF_o$ extends to a well-defined functor
\[
\fF_o \colon \underline{2}^n \longrightarrow \mathsf{Ab}.
\]

\subsubsection{Khovanov Chain Complex}

The construction of the Khovanov chain complex requires a coherent choice of signs on the edges of the $n$-dimensional cube $\underline{2}^n$.  The \emph{standard sign assignment} is a function
\[
\sgn \colon \{\text{edges of } \underline{2}^n\} \longrightarrow \mathbb{Z}_2
\]
defined as follows. Given an edge connecting states $s \leq_1 s'$, let $k \in \{1,\dots,n\}$ be the unique index for which $s_k < s'_k$. Then
\[
\sgn_{s,s'} \coloneq \sum_{i=1}^{k-1} s_i \;\bmod 2.
\]

Using this choice of sign assignment, we define, for each $\bullet \in \{e,o\}$, the chain complex associated to a link diagram $D$ by
\[
CKh_\bullet(D) \coloneq \bigoplus_{s \in \underline{2}^n} \mathfrak{F}_\bullet(s).
\]
The differential is given by
\[
\partial_\bullet(x)
\coloneq
\sum_{\substack{s' \\ s \leq_1 s'}}
(-1)^{\sgn_{s,s'}}
\,\mathfrak{F}_\bullet\big(\phi_{s,s'}(x)\big).
\]
where $x \in \Lambda_{\bullet}(sD)$.
The elements of $CKh_\bullet(D)$ are naturally bigraded by their homological grading $h$ and the quantum grading $q$. For an element
\[
x = a_1 \otimes \cdots \otimes a_k \in \Lambda_\bullet(sD), 
\qquad a_1, \dots, a_k \in sD,
\]
these gradings are defined by
\[
\begin{aligned}
	h(x) &\coloneq |s| - n_-, \\
	q(x) &\coloneq |sD| - 2k + |s| + n_+ - 2n_-,
\end{aligned}
\]
where $n_+$ and $n_-$ denote the numbers of positive and negative crossings of the diagram $D$, respectively.
The boundary maps $\partial_\bullet$ preserve the quantum grading and increase the homological grading by $1$.  
Consequently, the resulting homology, denoted $Kh_{\bullet}(D)$, inherits a natural bigrading. Its graded components are denoted $Kh^{i,j}_\bullet(D)$, where $i$ and $j$ denote the homological and quantum gradings, respectively.

The definition of $Kh^{i,j}_{\bullet}(D)$ depends on certain auxiliary choices, such as the ordering of the crossings, the orientations at crossings, and the choice of edge assignment.

\begin{theorem}[\cite{Khovanov_homology, ORS_OddKhovanovHomology}]
	The bigraded homology $Kh_{\bullet}^{i,j}(D)$ is independent of the choices involved in its construction and, up to isomorphism, depends only on the link $L$ represented by $D$. 
\end{theorem}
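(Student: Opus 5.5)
The plan has two parts. First, show that for a fixed diagram $D$ the complex $CKh_\bullet(D)$ is independent, up to isomorphism, of the auxiliary choices. Second, show invariance under the three Reidemeister moves, so that the bigraded homology depends only on the link $L$.

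For the first part, the key observation is that any two admissible sign conventions on the edges of the cube differ by a $1$-cocycle in $C^1_{\mathrm{cell}}([0,1]^n;\mathbb{Z}_2)$. Since the cube is contractible, such a cocycle is a coboundary $\delta\eta$ for some $0$-cochain $\eta$. Rescaling the summand $\fF_\bullet(s)$ by $(-1)^{\eta(s)}$ then gives an isomorphism of complexes that preserves both gradings. This argument applies in three situations.

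\begin{itemize}
\item \emph{Reordering the crossings.} The two standard sign assignments $\sgn$ and $\sgn'$ both make every square anticommute, so $\sgn+\sgn'$ is a cocycle.
\item \emph{Two type-X (resp.\ type-Y) edge assignments.} Two assignments $\epsilon,\epsilon'$ satisfy $\delta(\epsilon-\epsilon')=0$, so the same argument applies.
\item \emph{Reversing the arrow at a crossing $c$.} This negates the split maps $a_1-a_2$ on edges in the $c$-direction, i.e.\ it multiplies those edges by a $1$-cochain $\theta$. I would check that the face types (A, C, X, Y) change exactly by $\delta\theta$, so that $\epsilon+\theta$ is again an edge assignment for the new decoration. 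This reduces to the previous case.
\end{itemize}

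Passing between type-X and type-Y assignments is different, because $\psi_X-\psi_Y$ is not a coboundary in general. Following \cite{ORS_OddKhovanovHomology}, I would instead construct an explicit isomorphism. It acts on each $\Lambda_o(sD)$ by a sign depending on the state together with the degree of the exterior monomial, chosen so that it intertwines the two sets of edge maps.

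For Reidemeister invariance, I would follow Bar-Natan's cancellation approach, using Gaussian elimination on the cube complex. For R1 and R2, the complex of the larger diagram splits along the new crossings. One cancels an acyclic subquotient, namely the merge/split isomorphism onto the summand generated by a small circle, and what remains is the complex of the smaller diagram with the grading shifts absorbed by the $n_\pm$ normalization.

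In the odd case the cancelled maps are still isomorphisms up to sign, because $a_1-a_2$ and the merge map behave exactly like the even maps on the relevant summand. The edge assignment only affects which signs appear. By the first part, we may choose $\epsilon$ near the new crossings to be whatever is convenient, for example so that the new squares are of type C.

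The main obstacle is R3 in the odd theory. There I would reduce both sides by R2-type cancellations to a common complex, as in Bar-Natan's proof, and then check that the residual sign data agree. Concretely, I would verify that the two induced edge assignments differ by a coboundary on the cube of the remaining crossings. This needs a case check of how the face types of squares involving the three local crossings transform, and this is where the arrow conventions of Figure~\ref{fig: orientation of crossings} must be tracked carefully.
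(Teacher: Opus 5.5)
The paper does not prove this statement; it is quoted from \cite{Khovanov_homology, ORS_OddKhovanovHomology}. Your overall structure is the standard one: cube-cochain arguments for the auxiliary choices, then Reidemeister invariance by cancellation. Your treatment of reordering the crossings, of two edge assignments of the same type, and of reversing an arrow is sound. The step that fails is the passage from type-X to type-Y edge assignments.

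\textbf{The diagnosis is wrong.} By Theorem~\ref{thm:edge-assignment}, $\psi_X=\delta\epsilon_X$ and $\psi_Y=\delta\epsilon_Y$. Hence $\psi_X-\psi_Y=\delta(\epsilon_X-\epsilon_Y)$ is always a coboundary. The actual obstruction is that $\eta:=\epsilon_X-\epsilon_Y$ is not a cocycle: $\delta\eta$ is the indicator function of the type-X and type-Y faces. That is why no vertex rescaling relates the two complexes.

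\textbf{The proposed isomorphism cannot exist.} This fails whenever $D$ has a type-X or type-Y face. An example is the two-crossing Reidemeister~II diagram of the two-component unlink, whose all-$0$ resolution is exactly this configuration.

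Consider such a face with edges $e_1\colon s_{00}\to s_{10}$, $e_2\colon s_{10}\to s_{11}$, $e_3\colon s_{00}\to s_{01}$ and $e_4\colon s_{01}\to s_{11}$. A circle $a$ splits into $b_1,b_2$ (resp.\ $c_1,c_2$), which then merge into a single circle $d$. Suppose $\Phi_s$ multiplies $\Lambda_o^k(sD)$ by $(-1)^{f(s,k)}$.
\begin{itemize}
\item Intertwining along $e_1$ applied to $1$ (whose image $\pm(b_1-b_2)$ is nonzero), and along $e_2$ applied to $b_1$ (whose image $d$ is nonzero), forces $f(s_{11},1)-f(s_{00},0)\equiv\eta(e_1)+\eta(e_2)$.
\item The path through $s_{01}$ forces the same quantity to equal $\eta(e_3)+\eta(e_4)$.
\end{itemize}
Hence $\delta\eta=0$ on this face, a contradiction. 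The $\sgn$ signs cancel, since both complexes use the same $\sgn$. Changing $\epsilon_Y$ within its type does not alter $\delta\eta$. The same computation rules out every map that is diagonal in the monomial basis.

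\textbf{Possible repairs.} A diagram-level isomorphism must therefore be genuinely non-diagonal. On the face above one can take $+1$ on the image of the split map, which is the kernel of the merge, and $-1$ on a complement. Such choices would then have to be made coherently over the whole cube.

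Alternatively, you can avoid a diagram-level isomorphism. Reflect the plane and switch every crossing; this gives a diagram of the same link whose cube of resolutions is the reflected one.
\begin{itemize}
\item The edge maps change at most by a sign on every split edge. This is a cocycle, since each square has an even number of split edges, so A- and C-faces are preserved.
\item As one checks, X- and Y-faces are exchanged.
\end{itemize}
Reidemeister invariance for a single type then gives the claim.

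\textbf{A smaller slip.} Face types are determined by the arrows, not by $\epsilon$. So you cannot choose $\epsilon$ to make the new squares in R1 and R2 of type C. What you may choose, by your arrow-reversal step, are the arrows at the new crossings; $\epsilon$ itself is fixed up to $\delta$ of a $0$-cochain.
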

Thus, we obtain a well-defined link invariant, denoted by $Kh_{\bullet}^{i,j}(L)$.

\subsection{Reduced Khovanov homology} \label{sec: reduced Khovanov homology}

Let $p$ be a basepoint on the diagram $D$.  
For $\bullet \in \{e, o\}$, the \emph{reduced complex} $CKh_{\bullet,\mathrm{red}}(D, p)$ is defined as the subcomplex of $CKh_\bullet(D)$ consisting of elements of the form
\[
a \otimes y,
\]
where $a$ is a circle containing $p$ in a given resolution of $D$, and $y$ is any other element.  

The complex $CKh_{\bullet,\mathrm{red}}(D, p)$ inherits the homological grading from $CKh_\bullet(D)$, but its quantum grading is shifted. For an element $x = a \otimes y$, the quantum grading is
\[
q_{\red}(x) \coloneq q(x) + 1,
\]
where $q(x)$ is the formula for the quantum grading in the unreduced complex. 

\begin{definition}[Reduced Khovanov homology] \label{def: reduced_KH}
	For $\bullet \in \{e, o\}$, the \emph{reduced Khovanov homology} is defined as
	\[
	Kh^{i,j}_{\bullet, \mathrm{red}}(D, p) \coloneq H_i\left(CKh^{*,j}_{\bullet, \mathrm{red}}(D, p), \partial_\bullet \right).
	\]
\end{definition}

In the odd case, the reduced homology depends only on the link $L$ represented by $D$, and we denote it by $Kh^{i,j}_{o, \mathrm{red}}(L)$. 
In the even case, the reduced homology depends only on the link $L$ represented by $D$ and the component of $L$ containing the base point $p$. Accordingly, we denote it by $Kh^{i,j}_{e, \mathrm{red}}(L, p)$.

The odd Khovanov complex splits as a direct sum, see \cite[Proposition~1.7]{ORS_OddKhovanovHomology}:
\begin{equation} \label{eqn:oddKH_split}
	CKh^{i,j}_{o}(D) = CKh^{i,j-1}_{o, \mathrm{red}}(D) \oplus CKh^{i,j+1}_{o, \mathrm{red}}(D).
\end{equation}

In contrast, the even theory does not admit such a splitting. Instead, the chain complexes fit into a short exact sequence:
\[
0 \longrightarrow CKh^{i,j+1}_{e, \mathrm{red}}(D, p) 
\longrightarrow CKh^{i,j}_{e}(D) 
\longrightarrow CKh^{i,j-1}_{e, \mathrm{red}}(D, p) 
\longrightarrow 0.
\]

%% file: RedEvenKH_PositiveLinks.tex
\section{Reduced even Khovanov homology of positive links}\label{sec: Reduced even Khovanov homology of positive links}
This section is dedicated to the proof of Theorem~\ref{thm: maintheorem1} for even Khovanov homology. We begin by establishing the necessary setup.

Let $L$ be a non-split positive link with a basepoint $p$. Choose a positive diagram $D$ representing $L$ and, without loss of generality, assume that $p$ lies away from all crossings and their resolutions.
Let $D'$ be the diagram with $n'$ crossings obtained from a diagram $D$ by performing Seifert smoothing at all but one crossing between each pair of Seifert circles. Let $L'$ denote the link represented by $D'$. Consequently, $D'$ inherits a natural orientation from $D$, and with this orientation, $D'$ is a positive diagram. We keep the base point $p$ fixed in its original position.  

The diagram $D'$ has the following properties, which will be essential later.
\begin{enumerate}
	\item We have $|s_0 D| = |s_0 D'|$. This follows immediately from the fact that the Seifert circles obtained from $D$ and $D'$ are exactly the same.
	\item By \cite[Lemma~4.2]{KMMS_Khovanov_pos_fib}, the diagram $D'$ is alternating, and its signature satisfies
	\begin{equation}\label{eqn:signature_cyclomaticnumber_equality}
	\sigma(L') = p_1(D) = p_1(D').
	\end{equation}
\end{enumerate}

\begin{proposition}\label{prop: even technical}
The reduced even Khovanov homology of the diagram $(D', p)$ constructed above satisfies
\[
Kh^{1}_{e, \mathrm{red}}(D, p) = Kh^{1}_{e, \mathrm{red}}(D', p).
\]

\end{proposition}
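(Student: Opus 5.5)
The plan is to proceed by induction on the number of crossings removed. It suffices to treat one step. Suppose $c$ and $c'$ are two crossings of $D$ joining the same pair of Seifert circles $A,B$. Let $D_0$ be the diagram obtained by $0$-smoothing $c$, which is the Seifert smoothing since $D$ is positive. I will show that $Kh^1_{e,\red}(D,p)\cong Kh^1_{e,\red}(D_0,p)$, with quantum gradings shifted by the change in $n_+$. Iterating over all but one crossing between each pair of Seifert circles then yields $D'$.

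Note that $D_0$ is again positive, non-split, and has the same Seifert circles. Since $n_-=0$ throughout, the homological grading is just $|s|$. Also, the quantum shift $n_+(D)-n_+(D_0)=1$ is compensated by the change in $\chi$, because $\chi=|s_0D|-n_+$.

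For the single step, I will use the skein short exact sequence associated with the crossing $c$. The states with $s(c)=1$ form an upward closed set, so they span a subcomplex $K\subset CKh_{e,\red}(D,p)$. The quotient is $CKh_{e,\red}(D_0,p)$, up to a grading shift. Moreover, $K$ is isomorphic, up to shift, to the reduced cube of $D_1$, the $1$-smoothing of $c$, with homological degree raised by one. Thus $K^i\cong CKh^{i-1}(D_1)$ as cubes, so $K^0=0$. The long exact sequence then reads
\[
0\to H^1(K)\to Kh^1_{e,\red}(D,p)\to Kh^1_{e,\red}(D_0,p)\xrightarrow{\ \delta\ } H^2(K)\to\cdots .
\]
It is therefore enough to show that $H^1(K)=0$ and that $\delta=0$. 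The cleanest route is to prove that $H^1(K)=H^2(K)=0$, which amounts to proving that the reduced cube of $D_1$ is acyclic in cube degrees $0$ and $1$.

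To prove this acyclicity, I will exploit the parallel crossing $c'$. In $D_1$, the crossing $c$ has been $1$-smoothed, so $A$ and $B$ are merged into a single circle when $c'$ carries the marker $0$. Changing the marker of $c'$ from $0$ to $1$ then splits this circle again. Indeed, because $c$ and $c'$ are parallel between $A$ and $B$, the $c'$-edge from $(c,c')=(1,0)$ to $(1,1)$ is a split, for every choice of markers on the other crossings. I will filter the cube of $D_1$ by the marker of $c'$. This exhibits it as the mapping cone of the split map $\Delta\colon C_{c'=0}\to C_{c'=1}$. Equivalently, $D_1$ contains a Reidemeister~I curl at $c'$, and the cone of $\Delta$ is exactly the complex that appears in the Reidemeister~I cancellation. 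I will perform Gaussian elimination along the components of $\Delta$ that are isomorphisms. This requires placing the basepoint $p$ on a circle not involved in the split, or using basepoint independence within a component, and tracking the reduced generators $a\otimes y$. I expect this to reduce the low-degree part of the complex to a copy of the cube of a diagram whose lowest cube degree sits in homological degree at least $2$. That would give $H^1(K)=H^2(K)=0$ in every quantum grading.

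The main obstacle is making this cancellation precise. The split map $y\mapsto (a_1+a_2)\otimes y$ is only injective, not an isomorphism. To cancel it, one must pair each generator of $C_{c'=0}$ with an explicit complementary generator of $C_{c'=1}$. One must then check that the surviving generators lie only in degrees that do not affect $H^1$ and $H^2$. If this direct cancellation turns out to be delicate, the fallback is a degree-counting argument. Namely, I will show that the surjection $Kh^1_{e,\red}(D,p)\to Kh^1_{e,\red}(D_0,p)$ is injective by computing both sides in the only quantum gradings where $C^1$ and $C^2$ interact, using the $0$-adequacy of the positive diagrams $D$ and $D_0$.
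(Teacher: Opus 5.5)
Your reduction via the skein sequence is sound as far as it goes. $K=\{s(c)=1\}$ is a subcomplex with $K^0=0$, and $H^1(K)=\ker(K^1\to K^2)$ does vanish, simply because the $c'$-component of the differential out of $\Lambda_e(s^cD)$ is a split and hence injective. (The sequence actually begins $Kh^0(D_0)\to H^1(K)\to\cdots$, which is harmless once $H^1(K)=0$.) So $Kh^1_{e,\red}(D,p)\to Kh^1_{e,\red}(D_0,p)$ is injective, and everything rests on $\delta=0$.

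The route you choose for this, namely $H^2(K)=0$ in every quantum grading, is false. For the positive Hopf link $D=\widehat{\sigma_1^2}$ you get $K^1=\Z\langle a_M\rangle$, $K^2=\Z\langle a_P,\,a_Pa_Q\rangle$ and $a_M\mapsto a_Pa_Q$. Hence $H^2(K)\cong\Z$; it is $Kh^{2,5}_{e,\red}$ of the Hopf link. The supporting claims also fail. Take $D=\widehat{\sigma_1\sigma_2\sigma_1\sigma_2}$ with $c,c'$ the two $\sigma_1$-crossings. Then $c'$ is not a curl in $D_1$; it forms a Reidemeister~II bigon with a $\sigma_2$-crossing. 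Moreover, the $c'$-edge is a merge once both $\sigma_2$-crossings carry the marker $1$. Even for a genuine curl, the cancellation leaves survivors in cube degree $1$ of $D_1$, i.e.\ degree $2$ of $K$. That gives $H^1(K)=0$ but not $H^2(K)=0$. Your fallback presupposes the very surjectivity that is in question.

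What is missing is a chain-level lift, and this is exactly the mechanism of the paper's proof. Given a cycle $z=(z_j)_{j\neq c}$ of $CKh^1_{e,\red}(D_0,p)$, set $\tilde z_c:=z_{c'}$ via the identification $s^cD\cong s^{c'}D$. There is also an identification $s^{cj}D\cong s^{c'j}D$ compatible with all edge maps: these are merges when $j$ is not parallel to $c$, and symmetric splits $y\mapsto(p+q)y$ otherwise. For $j\neq c'$, the $s^{cj}$-component of $d\tilde z$ therefore vanishes by the cycle condition of $z$ at $s^{c'j}$ (with the standard signs it reads $d_{c'}(z_j)=d_j(z_{c'})$). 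For $j=c'$ it vanishes because both terms are the same split. Thus every class lifts, and $\delta=0$.

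The paper uses this observation directly, without any skein sequence. Cycles of $D$ have equal components on parallel edges, by injectivity of the splits into $s^{ij}$. So degree-$1$ cycles and boundaries of $D$ and $D'$ are identified in one step.

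Alternatively, you could restrict to the quantum grading $n-|s_0D|+3$. There $K$ is the acyclic complex $\bigotimes_{P}(\Z\xrightarrow{1}\Z)$ over the set $P\ni c'$ of crossings parallel to $c$. But this needs to know beforehand that $Kh^1(D_0)$ is concentrated in that grading, and your argument does not supply this.
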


\begin{proof}
	
	We suppress quantum grading shifts throughout, as neither the statement nor the argument depends on them. Since the diagram \( D \) contains no negative crossings, the homological grading \( i \) equals \( |s| \).
	
	As recalled in Section~\ref{sec: reduced Khovanov homology}, there is a natural inclusion of chain complexes
	\[
	CKh^{i}_{e,\mathrm{red}}(D,p) \hookrightarrow CKh^{i}_e(D).
	\]
	We therefore describe cycles in \( CKh^{i}_e(D) \); the same description applies to the reduced complex.
	Let \( G = G_0(D) \) be the state graph associated to the state \( s_0 \), which assigns the \(0\)-marker to every crossing. Let \( E = \{ e_1, \ldots, e_n \} \) be the edge set of \( G \), where each edge \( e_j \) corresponds to a crossing \( c_j \). Let \( l = |s_0 D| \) denote the number of circles in the resolution \( s_0 D \).
	
	The group \( CKh^1_e(D) \) is generated by states with exactly one \(1\)-marker, and decomposes as
	\[
	CKh^1_e(D) = \bigoplus_{i=1}^n A_i,
	\]
	where \( A_i =\Lambda_e(s^iD)\) and the state \( s^i \) assigning a \(1\)-marker at \( c_i \) and \(0\)-markers elsewhere. The proof of \cite[Theorem~4.1]{KMMS_Khovanov_pos_fib} establishes the following fact. If
	\[
	z = (z_i)_{i=1}^n \in CKh^1_e(D)
	\]
	satisfies \( \partial z = 0 \), then \( z_i = z_j \) whenever the edges \( e_i \) and \( e_j \) are parallel in \( G \).
	
	Therefore, for the purpose of computing \( Kh^1_{e,\red}(D,p) \), we may replace \( D \) by a diagram \( D' \) obtained by retaining exactly one crossing between each pair of Seifert circles and resolving all remaining crossings using the \(0\)-marker. In particular,
	\[
	Kh^1_{e,\red}(D,p) \cong Kh^1_{e,\red}(D',p).
	\]
	
\end{proof}

\begin{lemma} \label{lem: extreme even KH}
If $D$ is $0$-adequate link diagram with $n_+$ positive and $n_-$ negative crossings and a basepoint $p$, then its reduced even Khovanov homology satisfies the following.
\[
Kh_{e,\red}^{i,n_+ - 2n_- - |s_0 D| + 1}(D,p) =
\begin{cases}
	\mathbb{Z} & \text{if } i = -n_- \\
	0 & \text{otherwise.}
\end{cases}
\]
\end{lemma}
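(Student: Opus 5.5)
We identify the chain complex $CKh_{e,\red}^{*,j_{\min}}(D,p)$ at the quantum grading $j_{\min} \coloneq n_+ - 2n_- - |s_0D| + 1$ and show that it has rank one and sits in the single homological degree $-n_-$.

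Consider a generator $x = a_1\otimes\cdots\otimes a_k \in \Lambda_e(sD)$. Its reduced quantum grading is
\[
q_{\red}(x) = |sD| - 2k + |s| + n_+ - 2n_- + 1 .
\]
Elements of the reduced complex contain the marked circle $a_p$ as a factor, so $k \geq 1$. The maximum $k = |sD|$ is attained by the product of all circles. Hence every reduced generator satisfies
\[
q_{\red}(x) \geq -|sD| + |s| + n_+ - 2n_- + 1 .
\]
The key combinatorial input is the standard estimate $|sD| \leq |s_0D| + |s|$. Changing one marker alters the number of circles by $\pm 1$, so this follows by induction along a path from $s_0$ to $s$. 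Combining it with the bound above gives $q_{\red}(x) \geq j_{\min}$, with equality only if both of the following hold.
\begin{enumerate}
\item $k = |sD|$, so $x$ is the product of all circles.
\item $|sD| = |s_0D| + |s|$, so every step along every monotone path from $s_0$ to $s$ is a split.
\end{enumerate}

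Next we use $0$-adequacy. For $|s| = 1$, the single edge out of $s_0$ corresponds to a crossing joining two distinct circles of $s_0D$, since $G_0(D)$ has no loops. Smoothing it with a $1$-marker therefore merges those two circles, giving $|sD| = |s_0D| - 1$. For any state $s \neq s_0$ there is a monotone path from $s_0$ to $s$ whose first step is such a merge, so $|sD| < |s_0D| + |s|$ and condition (2) fails. We conclude that the only generator in quantum grading $j_{\min}$ is the top element of $\Lambda_e(s_0D)$, namely the product of all circles of $s_0D$. This element contains $a_p$, so it lies in the reduced complex. The complex $CKh^{*,j_{\min}}_{e,\red}(D,p)$ is therefore $\Z$, concentrated in homological degree $h = |s_0| - n_- = -n_-$, and it has zero differential.

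Hence the homology is $\Z$ in degree $-n_-$ and vanishes in all other degrees, as claimed. The main points to get right are the exact quantum bookkeeping for the reduced shift $+1$ and the constraint $k \geq 1$ coming from the marked circle. Everything else is the standard extreme-grading argument of \cite{extremeKH}, adapted to the reduced complex.
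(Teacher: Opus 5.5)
Your proof is correct and takes essentially the same approach as the paper. Both arguments use $0$-adequacy to show that the top monomial $a_1\otimes\cdots\otimes a_{|s_0D|}\in\Lambda_e(s_0D)$, which contains the marked circle, is the only generator of the reduced complex in quantum grading $n_+-2n_--|s_0D|+1$, so the complex is $\Z$ in degree $-n_-$ with zero differential. Your explicit bound $|sD|\le|s_0D|+|s|$, made strict for $s\neq s_0$ by choosing a path whose first step is a merge, simply spells out what the paper's ``the same argument shows'' sentence leaves implicit.
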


\begin{proof}
	Consider the generator
	\[
	x = a_1 \otimes a_2 \otimes \cdots \otimes a_{|s_0 D|}
	\]
	in $\Lambda_e(s_0 D)$, where each $a_i$ corresponds to a circle of $s_0 D$. Observe that this generator lies in the minimal possible quantum grading.
	
	Since $D$ is $0$-adequate, performing a $1$-smoothing at any crossing reduces the number of circles in the smoothing of that state compared to $s_0 D$. Consequently, $\partial x=0$. Furthermore, as there are no chain groups in lower homological degrees whose differential maps to $x$, the element $x$ represents a nontrivial homology class.
	The same argument shows that, for every state $s_0\leq s$, the quantum grading of any generator in $\Lambda_e(sD)$ is strictly greater than the minimal quantum grading by $2$. Hence, no additional generators in the required quantum grading arise from states obtained by $1$-smoothings.
	
	Hence, the chain complex in this minimal quantum grading is generated uniquely by $x$, and so is its homology.
\end{proof}

\begin{lemma} \label{lem: almost extreme even KH}
If $D$ is a positive diagram of $L$ with $n$ crossings, then its reduced even Khovanov homology satisfies the following.
\[
Kh_{e,\red}^{0,n - |s_0 D| + 3}(D,p) =0.
\]
\end{lemma}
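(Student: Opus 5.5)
Since $D$ is positive, $n_-=0$, $n_+=n$, and homological degree $0$ is exactly the all-$0$ state $s_0$. The chain group $CKh^{0}_{e,\red}(D,p)$ is therefore $\Lambda_e(s_0D)$ restricted to elements of the form $a_p\otimes y$, where $a_p$ is the circle of $s_0D$ containing $p$. There is nothing in homological degree $-1$, so $Kh^{0,j}_{e,\red}(D,p)=\ker\bigl(\partial\colon CKh^{0,j}_{e,\red}\to CKh^{1,j}_{e,\red}\bigr)$. The task is to show this kernel is zero for $j=n-|s_0D|+3$.

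\textbf{Step 1 (identify the degree-$0$ generators).} With $l=|s_0D|$ and $|s|=0$, the reduced quantum grading of a monomial with $k$ factors is $l-2k+n+1$. Setting this equal to $n-l+3$ gives $k=l-1$. Because $a_p$ must be a factor, the generators are
\[
x_b=\bigotimes_{a\in s_0D\setminus\{b\}} a,\qquad b\neq a_p,
\]
that is, products over all circles except one circle $b\neq a_p$. There are $l-1$ of them, one for each vertex $b$ of $G=G_0(D)$ other than the base vertex.

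\textbf{Step 2 (compute the differential).} For a positive diagram, $G$ is the Seifert graph, so it has no loops and every edge $c_i$ joins two distinct circles $u_i,v_i$. Changing the marker at $c_i$ merges $u_i$ and $v_i$ into one circle $w$. The merge map sends $x_b$ to the product over the merged circles, in which both $u_i$ and $v_i$ become $w$.
\begin{itemize}
\item If $b\notin\{u_i,v_i\}$, the product contains $w\otimes w=0$.
\item If $b\in\{u_i,v_i\}$, the result is the monomial $\pm$ the product of all circles of $s^iD$, with a nonzero coefficient.
\end{itemize}
Hence the component of $\partial(\sum_b\lambda_b x_b)$ in $A_i$ is $\pm(\lambda_{u_i}+\lambda_{v_i})$ times the top generator of $A_i$, with the convention $\lambda_{a_p}=0$. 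The signs are the standard signs $\sgn$, and since each summand $A_i$ receives contributions from only two terms, I only need to track whether they add or cancel.

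\textbf{Step 3 (kernel is zero).} A cycle must satisfy $\lambda_{u_i}=\pm\lambda_{v_i}$ for every edge of $G$. Since $L$ is non-split and $D$ may be taken non-split, $G$ is connected. Propagating along a path from the base vertex $a_p$, where $\lambda=0$, forces $\lambda_b=0$ for every $b$. Therefore the kernel vanishes and $Kh^{0,n-l+3}_{e,\red}(D,p)=0$. The unreduced analogue is where bipartiteness would matter, but the anchor $\lambda_{a_p}=0$ removes it here.

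\textbf{Main obstacle.} The delicate point is Step 2's sign bookkeeping. The propagation argument only needs each coefficient $\pm1$ to be a unit, so the exact signs are irrelevant and only nonvanishing of both contributions matters. I will check this carefully for the even merge map, including that no circle appears twice except in the $w\otimes w$ case.
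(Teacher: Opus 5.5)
Your proof is correct, and it is organized differently from the paper's. The paper works from the unreduced side. It quotes \cite[Remark~6.4]{almost_extremeKH} for the fact that the degree-$0$ cycles in this grading are spanned by $\sum_v \sigma(v)\, a_1\otimes\cdots\widehat{a_v}\cdots\otimes a_{|s_0D|}$, with $\sigma$ alternating across edges. It then observes that this element has a nonzero term omitting $a_p$, so no nonzero multiple of it lies in the reduced subcomplex. You instead restrict to the reduced generators first and solve the edge equations directly, anchored by $\lambda_{a_p}=0$. The underlying computation is the same: each crossing $c_i$ forces $\lambda_{u_i}+\lambda_{v_i}=0$.

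Your version has some advantages over the citation route:
\begin{itemize}
\item It is self-contained. The unreduced description the paper quotes depends on $G_0(D)$ being connected and bipartite (true here, since it is a Seifert graph); your argument never needs bipartiteness.
\item It works verbatim over any coefficient ring, which is what Proposition~\ref{prop: general coefficent} later requires.
\item It makes explicit that non-splitness is essential. For a split diagram $D_1\sqcup O$ with $p$ on $D_1$, your element $x_O$ is a cycle and the group is nonzero. So the lemma relies on the section's standing assumption that $L$ is non-split.
\end{itemize}

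Two small corrections. First, $D$ is given, not chosen. The right justification is that every diagram of a non-split link is connected, hence $G_0(D)$ is connected. Second, the signs need no hedging. Both contributions to $A_i$ pass through the single cube edge $s_0\to s^i$, where $\sgn_{s_0,s^i}=0$. The even merge map is a homomorphism of a commutative algebra. So the $A_i$-component is exactly $(\lambda_{u_i}+\lambda_{v_i})$ times the top generator.
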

\begin{proof}
    By \cite[Remark~6.4]{almost_extremeKH}, the unreduced even Khovanov complex admits a unique generating cycle in bidegree $(i,j) = (0, n - |s_0 D| + 3)$, explicitly given by
    \[
    \sum_{v \in V_{G_0(D)}} \sigma(v)\, a_1 \otimes \cdots \widehat{a_v} \cdots \otimes a_{|s_0D|},
    \]
    where $\sigma \colon V_{G_0(D)} \to \{\pm 1\}$ assigns opposite signs to adjacent vertices, and $\widehat{a_v}$ indicates omission of the corresponding tensor factor. For details regarding the derivation of this generator, see the proof of Theorem~\ref{thm: maintheorem5} in Section~\ref{sec: Almost extreme reduced odd Khovanov homology}.

    This element does not lie in the subcomplex
    \[
    CKh_{e,\red}^{0,\, n - |s_0 D| + 3}(D,p) \hookrightarrow CKh_{e}^{0,\, n- |s_0 D| + 2}(D),
    \]
    and therefore does not contribute to the reduced even Khovanov homology.
\end{proof}

\begin{lemma}\label{lem: reduced even KH last_group}
 The reduced even Khovanov homology of the diagram $(D',p)$ above satisfies 
	\begin{align*}
		Kh^{1, j}_{e,\red}(D',p)=\begin{cases}
			\Z^{p_1(D')} &\textrm{ if } j=n'-|s_0D|+3,\\
			0 &\textrm{ otherwise,}
		\end{cases}
	\end{align*}
\end{lemma}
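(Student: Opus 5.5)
Since $D'$ is a positive alternating diagram, the plan is to use thinness of reduced Khovanov homology of alternating links together with the signature identity \eqref{eqn:signature_cyclomaticnumber_equality}. We also need $L'$ to be non-split. This holds because $D'$ keeps at least one crossing between every pair of adjacent Seifert circles. Hence $G_0(D')=G_0(D)^{\red}$ is connected, and so is the diagram $D'$. A connected alternating diagram represents a non-split link. Fix this reduced alternating picture first, discarding nugatory crossings if necessary; these correspond to bridges of $G_0(D')$ and do not change $p_1$.

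Step one: by Lee's theorem (or Ozsváth--Szabó for the reduced theory), the reduced even Khovanov homology of a non-split alternating link is free abelian and supported on the single diagonal $j=2i+s$ with $s=-\sigma(L')$ in the paper's conventions. The constant $s$ is pinned down by Lemma~\ref{lem: extreme even KH} rather than by tracking signature conventions. That lemma puts the class in bidegree $(0,n'-|s_0D'|+1)$, so the diagonal is $j=2i+n'-|s_0D|+1$, using $|s_0D'|=|s_0D|$. Consequently $Kh^{1,j}_{e,\red}(D',p)$ vanishes unless $j=n'-|s_0D|+3$, and in that grading it is free.

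Step two is to compute the rank in that grading. Because the homology is thin and free, its ranks are the absolute values of the coefficients of the reduced Jones polynomial, which alternate in sign. So the rank of $Kh^{1}_{e,\red}$ equals the absolute value of the second coefficient of $V_{L'}$, normalised so that the lowest coefficient is $\pm1$. I would compute this coefficient via a Kauffman-bracket state sum, or equivalently as a graded Euler characteristic of the reduced complex restricted to $j=n'-|s_0D|+3$ and homological degrees $0,1$.

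At homological degree $0$ and that quantum grading, the reduced generators are the elements $a_p\otimes\prod_{w\ne v}a_w$, obtained by omitting one factor $a_v$ with $v$ not the basepoint circle. There are $|s_0D|-1$ of them. At degree $1$, the generators come from $n'$ states with a single $1$-marker. Each such state merges two circles (there are no loops) and contributes exactly one top generator. No higher degree contributes in this grading, by the counting argument in Lemma~\ref{lem: extreme even KH}. The Euler characteristic is therefore $n'-(|s_0D|-1)=p_1(G_0(D'))$, since the graph is connected and reduced. Lemma~\ref{lem: almost extreme even KH} gives vanishing in degree $0$, so the rank in degree $1$ is exactly $p_1(D')$.

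An alternative that avoids thinness is to use the formula $\operatorname{rank}=|\sigma|$-type identities, but the Euler characteristic route is cleaner. The main obstacle is justifying the thinness input and the absence of contributions from degrees $\ge2$ in this quantum grading. For the latter, observe that a state with $k$ one-markers has at most $|s_0D|-k$ circles. Its minimal reduced quantum grading is then at least $|s_0D|-k\cdot$ hmm; more carefully, the minimal grading rises by $2$ with each additional marker, because positive-diagram merges lose one circle per step. So degree $\ge2$ generators start at $j\ge n'-|s_0D|+5$. This also shows directly that the complex in that grading is just $\Z^{|s_0D|-1}\to\Z^{n'}$. The coboundary there is the (signed) incidence map of the graph, which is injective on the reduced part, so its cokernel is free of rank $p_1$. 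That cokernel claim is the one I would verify carefully, as it gives freeness without invoking thinness.
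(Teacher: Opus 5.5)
Your argument is correct and reaches the rank by a different route from the paper. The paper locates the diagonal through the signature identity \eqref{eqn:signature_cyclomaticnumber_equality}. It then gets the rank by induction on $p_1(D')$ via the skein long exact sequence, using Lemma~\ref{lem: extreme even KH} for $D_1$ and Lemma~\ref{lem: almost extreme even KH} for $D_0$, and splitting the resulting short exact sequence.

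You instead pin the diagonal with the extremal class of Lemma~\ref{lem: extreme even KH}, which neatly avoids sign conventions. (In the paper's convention the diagonal is $j-2i=\sigma(L')$, not $-\sigma(L')$, but you never use this.) You then compute the whole reduced complex in quantum grading $n'-|s_0D|+3$.

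Your last idea is the strongest version of this. Let $g_v$ be the degree-$0$ generator omitting the factor of a non-basepoint circle $v$, and $t_e$ the unique generator of the state $s^e$. The complex is $\Z^{|s_0D|-1}\to\Z^{n'}$ with $g_v\mapsto\sum_{e\ni v}t_e$. All signs are $+1$, since edges leaving $s_0$ have $\sgn=0$. So the map is the transpose of the unsigned incidence matrix of $G_0(D')$ with the basepoint row deleted.

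For a spanning tree, the corresponding maximal minor is $\pm1$: expand along a leaf other than the basepoint vertex and induct. Hence the map is split injective. This gives vanishing in degree $0$ and $\Z^{n'-|s_0D|+1}=\Z^{p_1(D')}$, free, in degree $1$. The skein sequence, Lemma~\ref{lem: almost extreme even KH}, and torsion-freeness of alternating links all become unnecessary. Thinness is only needed to kill $Kh^{1,j}_{e,\red}(D',p)$ for $j\neq n'-|s_0D|+3$, and the computation in that grading works over any coefficient ring.

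The one step whose justification you must redo is the absence of generators in homological degree $\geq 2$. As written, it is false. A state with $k$ one-markers can have more than $|s_0D|-k$ circles. Also, the minimal quantum grading of $\Lambda_e(sD)$ does not rise by $2$ with every marker: it rises by $2$ at a merge and is unchanged at a split. For example, $1$-smoothing all edges of a cycle of $G_0(D')$ ends with a split.

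The correct argument has two parts. First, this minimal grading is non-decreasing along cube edges. Second, $G_0(D')$ has neither loops nor parallel edges, so every state with two $1$-markers is reached by two merges. Such a state has $|s_0D|-2$ circles and minimal reduced grading $n'-|s_0D|+5$, and this bound persists in all higher degrees.

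This is precisely where the passage from $D$ to $D'$ is used. In $D$ itself, two parallel crossings give a degree-$2$ state with $|s_0D|$ circles, which does contribute in this grading. Finally, the preliminary discarding of nugatory crossings is not needed in your route: thinness is a statement about $L'$, and the chain-level computation works for $D'$ as constructed.
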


\begin{proof}
	Recall that, by construction, $D'$ a positive, alternating diagram representing a non-split link. Without loss of generality, we may also assume that $D'$ is reduced. By \cite{Lee_KhovanovHomology}, the reduced Khovanov homology of a non-split alternating link is thin, i.e.,
	\[
	Kh^{i,j}_{e,\red}(L') \neq 0 \quad \text{only if} \quad j - 2i = \sigma(L').
	\]
	In particular, for $i = 1$, the reduced Khovanov homology can be nonzero only in the quantum grading
	\[
	j = \sigma(L') + 2.
	\]
	By Equation~\eqref{eqn:signature_cyclomaticnumber_equality} and Definition~\ref{def: cyclomatic number}, we obtain	\[
	\sigma(L') = p_1(D') = n' - |s_0 D'| + 1,
	\]
	so that
	\[
	Kh^{1,j}_{e,\red}(D',p) \neq 0 \quad \text{only if} \quad j = n' - |s_0 D'| + 3.
	\]
	We now prove that 
	\[
	Kh^{1, n'-|s_0 D| + 3}_{e,\red}(D',p) \cong \mathbb{Z}^{\,p_1(D')}.
	\] 
	We proceed by induction on $p_1(D')$.
	\begin{itemize}
		\item If $p_1(D') = 0$, then $G_0(D')$ is a tree, so $D'$ represents the unknot. In this case, the reduced Khovanov homology clearly satisfies the statement. 
		\item Now suppose that $p_1(D') > 0$. Let $v$ be a crossing of $D'$, and consider its $0$-smoothing $D_0$ and $1$-smoothing $D_1$. Then, there is a long exact sequence in reduced Khovanov homology~\cite{MPSWY_Search_for_torsion}.
		\[
		\longrightarrow 
		Kh_{e,\mathrm{red}}^{\frac{w_1 - w - 1}{2} + 1,\, \frac{3(w_1 - w) - 1}{2} + p_1(D') + 2}(D_1,p)
		\longrightarrow Kh_{e,\mathrm{red}}^{1, p_1(D') + 2}(D',p)
		\longrightarrow Kh_{e,\mathrm{red}}^{1, p_1(D_0) + 2}(D_0,p) \longrightarrow
		\]
		where $w$ and $w_1$ denote the writhes of $D'$ and $D_1$, respectively.
		\item Since $D_0$ is positive, alternating, and represents a non-split link, the induction hypothesis applies. Hence, 
		\[
		Kh_{e,\mathrm{red}}^{1, p_1(D_0) + 2}(D_0,p) \cong \mathbb{Z}^{\,p_1(D_0)} = \mathbb{Z}^{\,p_1(D') - 1}.
		\] 
		Note that $D_1$ is adequate. Hence, by Lemma~\ref{lem: extreme even KH}, we have
		\[
		Kh_{e,\mathrm{red}}^{\frac{w_1 - w - 1}{2} + 1,\, \frac{3(w_1 - w) - 1}{2} + p_1(D') + 2}(D_1,p) \cong \mathbb{Z}, \quad \text{and} \quad Kh_{e,\mathrm{red}}^{\frac{w_1 - w - 1}{2} + 2,\, \frac{3(w_1 - w) - 1}{2} + p_1(D') + 2}(D_1,p)=0.
		\]
		By Lemma~\ref{lem: almost extreme even KH}, we have
		\[
		Kh_{e,\mathrm{red}}^{0, p_1(D_0) + 2}(D_0,p) = 0.
		\]
	\end{itemize}

	Combining these facts with the long exact sequence, we obtain the short exact sequence
	\[
	0 \longrightarrow \mathbb{Z} \longrightarrow Kh_{e,\mathrm{red}}^{1, p_1(D') + 2}(D',p) \longrightarrow \mathbb{Z}^{\,p_1(D') - 1} \longrightarrow 0.
	\]
	
	This completes the induction step and proves the desired result for all $p_1(D')$.
\end{proof}

\begin{proof}[Proof of Theorem~\ref{thm: maintheorem1} (even case)]
From Proposition~\ref{prop: even technical} and Lemma~\ref{lem: reduced even KH last_group}, we obtain that the reduced even Khovanov homology of $D$ in homological grading $1$ is $\mathbb{Z}^{\,p_1(D)}$. Thus, determining the precise quantum grading is all that remains.

We know that for $D'$, this group appears in quantum grading $n' - |s_0 D'| + 3$. There is no difference in the homological gradings of the complexes of $D$ and $D'$, both of which are simply $|s|$. However, there is a difference in their quantum grading shifts. Since $|s_0 D|=|s_0 D'|$, their quantum grading shift differs by $n - n'$, that is, there is an additional shift of $n - n'$ when passing to the complex $CKh^{1,j}_{e,\mathrm{red}}(D)$.

Therefore, the group $\mathbb{Z}^{\,p_1(D)}$ must appear in quantum grading
\[
n - |s_0 D| + 3
\]
of $Kh^1_{e,\mathrm{red}}(D)$. Combining this with the fact that the Euler characteristic of a link can be computed from a positive diagram as $|s_0 D| - n$, this gives the desired result.
\end{proof}

%% file: Extreme_AlmostextremeKH.tex
\section{Extreme and almost extreme reduced even Khovanov homology} \label{sec: Extreme and almost extreme reduced even Khovanov homology}
In this section, we discuss the versions of the results proved in \cite{extremeKH} and \cite{almost_extremeKH} for extreme and almost extreme Khovanov homology, for reduced even Khovanov homology. 

\subsection{Extreme reduced even Khovanov homology} \label{sec: Extreme reduced even Khovanov homology}
In this section, we describe the reduced even Khovanov complex in the extreme quantum grading and use this description to determine its homotopy type by relating it to the descriptions of Khovanov homotopy types in \cite{LS_StableHomotopyType, extremeKH}.
\begin{proposition} \label{prop: extreme even KH complex}
Given a link diagram $D$ with a base point $p$, the even Khovanov chain complex and the reduced even chain complex are related as follows.
\[
\left(CKh_{e,\mathrm{red}}^{i,\, n_+ - 2n_- - |s_0 D| + 1}(D,p), \partial_e \right) = \left( CKh_{e}^{i,\, n_+ - 2n_- - |s_0 D|}(D), \partial_e \right).
\]
\end{proposition}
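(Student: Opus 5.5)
The plan is to compare the two complexes generator by generator. I will show that in the relevant quantum gradings the reduced complex and the unreduced complex have exactly the same underlying groups, and then observe that the differentials agree automatically. No adequacy hypothesis should be needed; the statement is purely about the extreme quantum grading.

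\emph{Step 1: the extreme quantum grading.} Let $x = a_1\otimes\cdots\otimes a_k \in \Lambda_e(sD)$. Its unreduced quantum grading is
\[
q(x) = |sD| - 2k + |s| + n_+ - 2n_-.
\]
Since $k \le |sD|$, we have $q(x) \ge -|sD| + |s| + n_+ - 2n_-$. Each elementary change of a marker alters the number of circles by exactly $\pm 1$, so $|sD| \le |s_0D| + |s|$. Combining the two inequalities gives
\[
q(x) \ge n_+ - 2n_- - |s_0D|.
\]
Equality holds if and only if both bounds are sharp: $k = |sD|$, so that $x$ is (up to sign) the product of \emph{all} circles of $sD$, and $|sD| = |s_0D| + |s|$. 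Hence $CKh_e^{i,\,n_+-2n_--|s_0D|}(D)$ is freely generated by the full products $a_1\otimes\cdots\otimes a_{|sD|}$, one for each state $s$ with $|s| = i + n_-$ and $|sD| = |s_0D| + |s|$.

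\emph{Step 2: these generators are all reduced.} Each full product contains the circle through $p$ as a tensor factor. Since $\Lambda_e$ is commutative, it can be rewritten in the form $a\otimes y$ with $a$ the basepoint circle, so it lies in $CKh_{e,\mathrm{red}}(D,p)$.

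\emph{Step 3: the reduced side.} Conversely, an element of the reduced complex has reduced grading $q_{\mathrm{red}} = q + 1$. So the reduced summand in grading $n_+ - 2n_- - |s_0D| + 1$ consists precisely of the reduced elements whose unreduced grading is $n_+ - 2n_- - |s_0D|$. By Steps 1 and 2 this is all of $CKh_e^{i,\,n_+-2n_--|s_0D|}(D)$. Thus the two graded groups coincide as subgroups of $CKh_e(D)$.

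\emph{Step 4: differentials.} The reduced complex is a subcomplex with the restricted differential $\partial_e$. Both differentials preserve the quantum grading, so they agree on this summand, and the claimed equality of chain complexes follows.

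The only point needing real care is the inequality $|sD| \le |s_0D| + |s|$ and the analysis of its equality case in Step 1. This follows by induction along any monotone path in the cube from $s_0$ to $s$, since each edge of the cube is a single split or a single merge. Everything else is bookkeeping of the grading shift $q_{\mathrm{red}} = q + 1$.
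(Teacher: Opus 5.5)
Your proof is correct and follows essentially the same route as the paper: both identify the generators in the extreme grading as the full products $a_1\otimes\cdots\otimes a_{|sD|}$ over states in which every $1$-smoothing increases the number of circles. Both then note that each such product contains the basepoint circle, so it lies in the reduced subcomplex, and conclude via the shift $q_{\mathrm{red}}=q+1$; your explicit inequality $|sD|\le|s_0D|+|s|$, with its equality case, makes rigorous the step the paper only sketches.
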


\begin{proof}
The proof is analogous to that of Lemma~\ref{lem: extreme even KH}.
The proof follows by explicitly understanding the generators of the Khovanov complex in this quantum grading. Consider the generator
	\[
	x = a_1 \otimes a_2 \otimes \cdots \otimes a_{|s_0 D|}
	\]
in $\Lambda_e(s_0 D)$, where each $a_i$ corresponds to a circle of $s_0 D$. Observe that this generator lies in the minimal possible quantum grading $n_+ - 2n_- - |s_0D| + 1$ and has homological grading $-n_-$.

To obtain a generator in the same quantum grading at homological grading $-n_-+1$, we must perform a $1$-smoothing at some crossing, which increases the number of circles. Thus, for such a state $s$, the corresponding generator is of the form
\[
x = a_1 \otimes a_2 \otimes \cdots \otimes a_{|sD|}
\]
in $\Lambda_e(sD)$, where each $a_i$ corresponds to a circle of $sD$.

More generally, each additional $1$-smoothing must increase the number of circles, with the generator given by the tensor product of the $x_i$ over all circles in that state.

Importantly, all of these generators are also generators of the reduced even chain complex. Therefore, we obtain an isomorphism
\[
CKh_{e,\mathrm{red}}^{i,\, n_+ - 2n_- - |s_0 D| + 1}(D,p) \;\cong\; CKh_{e}^{i,\, n_+ - 2n_- - |s_0 D|}(D).
\]
\end{proof}

The above proposition is simple, following directly from a basic understanding of the Khovanov chain complex, but it has several interesting consequences.

Before proceeding, we introduce the Spanier--Whitehead category, a simplified model of spectra that suffices for understanding the stable homotopy types arising from Khovanov homology.

\begin{definition}[Spanier--Whitehead category]
The Spanier--Whitehead category $\mathcal{SW}$ is defined as follows:
\begin{itemize}
    \item The objects are pairs $(X,n)$, where $X$ is a pointed CW-complex and $n \in \mathbb{Z}$.
    \item For two objects $(X_1,n_1)$ and $(X_2,n_2)$, the morphism set is
    \[
    \mathrm{Hom}_{\mathcal{SW}}\big((X_1,n_1),(X_2,n_2)\big)
    \coloneqq \varinjlim_{q \geq \max\{\lvert n_1\rvert,\lvert n_2\rvert\}}
    \big[\Sigma^{q+n_1} X_1, \Sigma^{q+n_2} X_2\big].
    \]
\end{itemize}
\end{definition}

The notion of reduced cohomology extends naturally to $\mathcal{SW}$ by defining
\[
\widetilde{H}^*\big((X,n)\big) \coloneqq \widetilde{H}^{*-n}(X).
\]

Let $D$ be a link diagram with a chosen basepoint $p$. In \cite{LS_StableHomotopyType}, R.~Lipshitz and S.~Sarkar associate to each quantum grading $j$ stable homotopy types $\mathcal{X}^j(D),\, \mathcal{X}^j_{\mathrm{red}}(D,p) \in \mathcal{SW}.$
These are defined so that their reduced cohomology recovers the even Khovanov homology.
\[
\widetilde{H}^*\big(\mathcal{X}^j(D)\big) \cong Kh^{*,j}(D), 
\qquad
\widetilde{H}^*\big(\mathcal{X}^j_{\mathrm{red}}(D,p)\big) \cong \widetilde{Kh}^{*,j}(D,p).
\]

In \cite{extremeKH}, J.~Gonz\'{a}lez-Meneses, P.~M.~G.~Manch\'{o} and M.~Silvero constructed another CW spectrum $X_D$ using Lando graphs and independence complexes, whose homology again recovered the even Khovanov homology of $D$ at the extreme quantum grading, i.e., $j= n_+ - 2n_- - |s_0 D|$. In \cite{ExtremeKhovanovSpectra}, it was shown that this construction is stable homotopy equivalent to the Lipshitz–Sarkar spectrum at the extreme quantum grading.

\begin{theorem}\label{thm:extreme_evenstablehomotopy}
Let $D$ be a link diagram with basepoint $p$. There is a stable homotopy equivalence at the minimal quantum grading:
\[
\mathcal{X}^{n_+ - 2n_- - |s_0 D| + 1}_{\mathrm{red}}(D,p) \;\simeq\; X_D \;\simeq\; \mathcal{X}^{n_+ - 2n_- - |s_0 D|}(D),
\]
where $X_D$ is the CW spectrum constructed in \cite{extremeKH}, and $\mathcal{X}^{j}_{\mathrm{red}}(D,p)$ and $\mathcal{X}^{j}(D)$ reduced and unreduced Lipshitz-Sarkar spectrum at quantum grading $j$, respectively.
\end{theorem}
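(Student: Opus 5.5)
The argument lifts the chain-level identification of Proposition~\ref{prop: extreme even KH complex} to the level of spectra. The second equivalence, $X_D \simeq \mathcal{X}^{n_+ - 2n_- - |s_0 D|}(D)$, is exactly the result of \cite{ExtremeKhovanovSpectra}, so it can be quoted directly. The real content is the first equivalence, $\mathcal{X}^{n_+ - 2n_- - |s_0 D| + 1}_{\mathrm{red}}(D,p) \simeq \mathcal{X}^{n_+ - 2n_- - |s_0 D|}(D)$. I would prove it by comparing the Lipshitz--Sarkar flow categories, or equivalently the cubical Burnside functors, in these two quantum gradings.

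\emph{Step 1: identify the generators and the sign data.} Recall that $\mathcal{X}^j(D)$ is the realization of a framed flow category $\mathcal{C}^j_K(D)$. Its objects are the Khovanov generators in quantum grading $j$, and its $0$-dimensional moduli spaces recover the coefficients of $\partial_e$. The reduced spectrum $\mathcal{X}^{j+1}_{\mathrm{red}}(D,p)$ is realized by the full subcategory whose objects are the generators in which the circle through $p$ is labelled $a$, in the notation used here. The proof of Proposition~\ref{prop: extreme even KH complex} shows that in the extreme grading every generator is the full tensor product of all circles of its resolution. In particular each such generator already contains the circle through $p$. Hence the object sets of the two flow categories coincide, and the gradings match, since both are $|s|-n_-$.

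\emph{Step 2: compare the higher moduli spaces.} In Lipshitz--Sarkar's construction the reduced flow category is a full subcategory of the unreduced one, cut out by a condition on the label of the marked circle. It is also closed under the morphisms: it is an upward-closed subcategory, matching the fact that the reduced complex is a subcomplex. A full subcategory has the same morphism spaces between its objects, so once the object sets agree, the two framed flow categories are literally equal. Their realizations, after the same grading shift, are then the same CW spectrum, which gives
\[
\mathcal{X}^{n_+ - 2n_- - |s_0 D| + 1}_{\mathrm{red}}(D,p) \simeq \mathcal{X}^{n_+ - 2n_- - |s_0 D|}(D).
\]
If one prefers the Burnside-functor model of Lawson--Lipshitz--Sarkar, the argument is the same. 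The reduced functor is the subfunctor given by the labelled generators, and in this grading it coincides with the whole functor.

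\emph{Main obstacle.} The hard part is Step 2, namely verifying carefully that the reduced spectrum is defined as (or is equivalent to) the full subcategory on these generators, including framings and the quantum shift by $1$. Lipshitz--Sarkar define $\mathcal{X}_{\mathrm{red}}$ precisely in this way, so I would quote their definition and check that the shift conventions agree with $q_{\red} = q + 1$. The remaining possible subtlety is the base point $p$ lying on a circle involved in a merge or split. Every generator in this grading has all circles labelled, so any such move keeps the label on the circle through $p$, and no case distinction is needed.
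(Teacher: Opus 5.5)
Your proposal is correct and follows the same route as the paper. In the extremal quantum grading every generator is the full tensor product of the circles of its resolution, so the reduced cube-indexed object (the paper's Burnside functor, equivalently your full flow subcategory) is the whole unreduced one; both arguments then quote \cite{ExtremeKhovanovSpectra} for $X_D$, the only difference being that you work primarily with Lipshitz--Sarkar flow categories rather than the Burnside-functor model.
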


\begin{proof}
These homotopy types are constructed from functors from the cube category to the Burnside category, which decompose as coproducts of functors indexed by the quantum grading. Proposition~\ref{prop: extreme even KH complex} shows that the functors corresponding to the extremal quantum grading are identical, and therefore determine identical homotopy types.
\end{proof}

\begin{remark}
This theorem shows that, at the minimal (extremal) quantum grading, the reduced and unreduced Khovanov stable homotopy types coincide with the CW spectrum $X_D$ constructed via Lando graphs. Hence, the extreme Khovanov homology captures the same stable homotopy type in all three constructions.
\end{remark}


\subsection{Almost extreme reduced even Khovanov homology}\label{sec: Almost extreme reduced even Khovanov homology}

This section is dedicated to proving Theorem~\ref{thm: maintheorem5} for even Khovanov homology.
The proof closely parallels the proof in Section~\ref{sec: Reduced even Khovanov homology of positive links}, with attention restricted to the almost-extreme quantum grading.


Before proceeding with the proof, we establish some preliminary notation and setup. 
Let $D$ be a $0$-adequate link diagram. 
Without loss of generality, we may assume that the base point $p$ lies away from all crossings and their resolutions. Let $D'$ be an unoriented link diagram with $n'$ crossings obtained from $D$ by performing a $0$-smoothing at all but one crossing between each pair of circles in $s_0 D$. We keep the base point $p$ fixed in its original position. Note that $D'$ is also $0$-adequate and satisfies $|s_0D'|=|s_0D|$.

We now define the complexes $C^{i,j}(D)$ and $C^{i,j}_{\red}(D,p)$ for unoriented link diagrams. The generators of $C^{i,j}(D)$ (respectively, $C^{i,j}_{\red}(D,p)$) are the same as those of the (unreduced) even Khovanov complex $CKh_e(D)$ (respectively, the reduced even Khovanov complex $CKh_{e,\red}(D,p)$), but the gradings are defined differently.

For an element $x = a_1 \otimes \cdots \otimes a_k \in \Lambda_e(sD)$, the gradings in the unreduced complex $C^{i,j}(D)$ are defined by
$$
h'(x) \coloneq |s| \quad \text{and} \quad q'(x) \coloneq |sD| - 2k + |s|.
$$

In the reduced complex $C^{i,j}_{\red}(D,p)$, the gradings are given by
$$
h'_{\red}(x) \coloneq |s| \quad \text{and} \quad q'_{\red}(x) \coloneq |sD| - 2k + |s| + 1.
$$

We denote the homology of the complexes $C^{i,j}(D)$ and $C^{i,j}_{\red}(D,p)$ by $H^{i,j}(D)$ and $H^{i,j}_{\red}(D,p)$, respectively.

\begin{remark}
The complexes $C^{i,j}(D)$ and $C^{i,j}_{\red}(D,p)$ are the unnormalized complexes defined in \cite[Equation~(44)]{Khovanov_homology} for unoriented diagrams. Working with these complexes is natural in the present setting, as our results concern $0$-adequate diagrams rather than the underlying links, and we work with unoriented diagrams throughout this section. Furthermore, at the level of diagrams, the skein exact sequence continues to hold for the unnormalized complexes.
\end{remark}

\begin{lemma}\label{lem: even almost extreme technical1}
Given a $0$-adequate diagram $D$ with a basepoint $p$ and $(D',p)$ constructed as above, we have 
\[
C_{\red}^{1,-|s_0D|+2}(D, p) \cong C_{\red}^{1,-|s_0D|+2}(D', p).
\]
\end{lemma}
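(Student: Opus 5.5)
The plan is to compute both chain groups directly from the unnormalized gradings $h'_{\red}$ and $q'_{\red}$, using only $0$-adequacy and the equality $|s_0D'|=|s_0D|$. I will show that the two groups are described by identical data, and in fact that both are zero, so the isomorphism is immediate.

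\emph{Step 1: the states in homological degree $1$.} Since $h'_{\red}(x)=|s|$, the group $C^{1,*}_{\red}(D,p)$ is spanned by the generators of $\Lambda_e(s^iD)$ containing the based circle. Here $s^i$ is the state with a single $1$-marker, at the crossing $c_i$. Because $D$ is $0$-adequate, $G_0(D)$ has no loops. So the two arcs at $c_i$ lie on distinct circles of $s_0D$, and changing the marker at $c_i$ merges them. Hence $|s^iD|=|s_0D|-1$ for every $i$. The diagram $D'$ is again $0$-adequate with $|s_0D'|=|s_0D|$, so the same holds for $D'$.

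\emph{Step 2: the quantum gradings in degree $1$.} For $x=a_1\otimes\cdots\otimes a_k\in\Lambda_e(s^iD)$, the formula $q'_{\red}(x)=|s^iD|-2k+|s^i|+1$ becomes
\[
q'_{\red}(x)=|s_0D|+1-2k .
\]
Therefore every generator in homological degree $1$, for $D$ and for $D'$ alike, has quantum grading congruent to $|s_0D|+1$ modulo $2$. The grading in the statement satisfies $-|s_0D|+2\equiv |s_0D| \pmod 2$, which has the opposite parity. So no generator of either complex lies in bidegree $(1,-|s_0D|+2)$. This gives $C^{1,-|s_0D|+2}_{\red}(D,p)=0=C^{1,-|s_0D|+2}_{\red}(D',p)$, and the isomorphism holds trivially.

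\emph{Step 3: a bijection of generators in each quantum grading.} Steps 1 and 2 also yield a finer statement in every quantum grading $j$, which is what the later argument will use. The key facts are the following.
\begin{itemize}
\item The generators of $C^{1,j}_{\red}$ are indexed by pairs $(c_i,\text{monomial})$.
\item The resolution $s^iD$ depends only on which pair of circles of $s_0D$ the crossing $c_i$ joins, together with its position.
\item The $0$-smoothings used to pass from $D$ to $D'$ leave $s_0D$ unchanged.
\end{itemize}
Consequently, each crossing $c_i$ of $D'$ yields the same circles $s^iD'$ as the corresponding crossing of $D$, and the grading formula above is identical for both diagrams. The main point needing care is the bookkeeping between crossings of $D$ and crossings of $D'$. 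I will handle it by matching the generators of $C^{1,j}_{\red}(D',p)$ with those coming from one chosen representative crossing for each pair of adjacent circles. The vanishing in the stated bidegree, however, needs only the parity computation in Step 2.
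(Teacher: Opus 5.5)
Your Step 2 is correct. In homological degree $1$ every generator has $q'_{\red}=|s_0D|+1-2k$; indeed $q'_{\red}\equiv|s_0D|+1 \pmod 2$ on the whole reduced complex. So both groups in the printed bidegree $(1,-|s_0D|+2)$ are zero, and the statement as printed holds trivially. That vanishing, however, reflects an off-by-one misprint in the grading, not the content of the lemma. The lemma is used together with Lemma~\ref{lem: even almost extreme technical2}, which claims $\Z^{p_1(D)}$ in the same bidegree. That would be false for $p_1(D)>0$ if the group were zero, and its proof actually runs at $q'_{\red}=-|s_0D|+3$. After normalization this is the grading $n_+-2n_--|s_0D|+3$ of Theorem~\ref{thm: maintheorem5} at $i=-n_-+1$. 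What is needed is an isomorphism of degree-$1$ homology in that grading; the companion lemma treats these groups as homology via a long exact sequence. Your proposal does not give this.

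In that grading your Step 3 fails. Each $s^iD$ contributes exactly one generator $g_i$, the product of all its circles, which automatically contains the based circle. Hence the degree-$1$ chain groups of $D$ and $D'$ have ranks $n$ and $n'$. Choosing one representative crossing per pair of circles gives only an injection, and the summands $A_i$ of the other parallel crossings are unmatched. For the standard positive trefoil diagram the ranks are $3$ and $1$.

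The missing idea is the paper's computation of $\ker d^1$:
\begin{itemize}
\item For parallel crossings $c_i,c_j$, the state $s^{ij}$ has $|s_0D|$ circles, so $A_{ij}$ has a generator in this grading.
\item Up to an overall sign, the only components of $d^1$ into $A_{ij}$ are $\delta_{iij}=-\delta_{jij}=(a_p+a_q)\otimes(\,\cdot\,)$. These are injective, which forces $x_i=x_j$.
\item For non-parallel pairs there is no generator in this grading.
\end{itemize}
Thus the cycles are exactly the tuples constant on parallel classes. These correspond to the degree-$1$ chains of $D'$, where $d^1$ vanishes in this grading because $D'$ has no parallel crossings. To conclude on homology, you must also check the images of $d^0$. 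The degree-$0$ generator omitting a non-based circle $a_v$ is sent to $\sum_{c_i\ni v} g_i$. This sum is constant on parallel classes and matches the analogous element for $D'$, so the images of $d^0$ agree as well.
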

The proof of this lemma is essentially the same as that of \cite[Theorem~4.1]{KMMS_Khovanov_pos_fib}, with only minor modifications. We include the details here for completeness.
\begin{proof}
    There is a natural inclusion of chain complexes
	\[
	C^{i,j+1}_{\mathrm{red}}(D,p) \hookrightarrow C^{i,j}(D).
	\]
	We therefore describe cycles in \( C^{i,j}(D) \); the same description applies to the reduced complex.
	Let \( G = G_0(D) \) be the state graph associated to the state \( s_0 \), which assigns the \(0\)-marker to every crossing. Let \( E = \{ e_1, \ldots, e_n \} \) be the edge set of \( G \), where each edge \( e_j \) corresponds to a crossing \( c_j \). Let \( l = |s_0 D| \) denote the number of circles in the resolution \( s_0 D \).
	
	The group \( C^{1}(D) \) is generated by states with exactly one \(1\)-marker, and decomposes as
	\[
	C^{1}(D) = \bigoplus_{i=1}^n A_i
	\]
	where \( A_i =\Lambda_e(s^iD)\) and the state \( s^i \) assigning a \(1\)-marker at \( c_i \) and \(0\)-markers elsewhere. 
    
    Similarly, the group $C^{2}(D)$ consists of states with exactly two $1$-markers
	\[
	C^2(D) = \bigoplus_{1 \leq i < j \leq n} A_{ij},
	\]
    where \( A_{ij} = \Lambda_e(s^{ij}D)\) and the state $s^{ij}$ assigns $1$-marker at crossings \( c_i \) and \( c_j \) and $0$-markers elsewhere. If the edges $e_i$ and $e_j$ are parallel in the graph $G$, meaning that the corresponding crossings connect the same pair of circles, then the resolution $s^{ij}D$ has exactly $l$ circles. Indeed, smoothing one of the crossings merges the two circles (since $D$ is $0$-adequate) and decreases the number of circles by one, while smoothing the second crossing splits this circle into a new two circles, restoring the total number of circles to $l$.

    The differential $d^1$ arises from the maps 
    $$\delta_{kij} \colon A_k \to A_{ij}.$$
    in the cube of resolution. We have by definition that \( \delta_{kij} = 0 \) whenever \( k \neq i, j \), as there is no edge connecting the corresponding states in the cube. The rest of the proof boils down to analysing this map when the edges \( e_i \) and \( e_j \) are parallel. 

    The only non-zero maps into  \( A_{ij} \) are $\delta_{iij} \colon A_i \to A_{ij}$, $\delta_{jij} \colon A_j \to A_{ij}$, see the following commutative diagram:
\begin{center}
	\begin{xy}
		(0,90)*+{}="empt";(30,90)*+{C^1(D)}="D1"; (90,90)*+{C^2(D)}="D2";%
		(30,70)*+{A_i\oplus A_j}="AiAj"; (90,70)*+{A_{ij}}="Aij";%
		{\ar@{->}^{d^1} "D1";"D2"};
		{\ar@{->}_{\pi_i\oplus \pi_j} "D1";"AiAj"};
		{\ar@{->}^{\pi_{ij}} "D2";"Aij"};
		{\ar@{->}^{\delta_{iij}\oplus\delta_{jij}} "AiAj";"Aij"};
	\end{xy}
\end{center}
Here, $\pi_i$, $\pi_j$, and $\pi_{ij}$ denote the natural projections onto the respective summands $A_i$, $A_j$, and $A_{ij}$. These smoothings share $l-2$ common circles, which we identify when comparing the corresponding states and their resolutions. With this identification, we obtain the following description of the maps.
\begin{align*}
\delta_{iij}(x) = \phantom{-} (a_p + 
a_q)\otimes x,\\
\delta_{jij}(x) = - (a_p + 
a_q) \otimes x,
\end{align*}
where $p$ and $q$ denote the circles of $s^{ij}D$ that are not shared by $s^iD$ and $s^jD$. The minus sign in $\delta_{jji}$ appears because, without loss of generality, we assume that $i < j$.

Now suppose that $x\in\ker(d^1)$, and set $x_i:=\pi_i(x)$ and $x_j:=\pi_j(x)$. Since $\pi_{ij}\circ d^1(x)=0$, it follows that
$$
\delta_{iij}(x_i)+\delta_{jij}(x_j)=0.
$$

Since $\delta_{iij}$ and $\delta_{jij}$ are injective, it follows that
\[
x_i = x_j.
\]

This argument shows that for any pair of parallel edges $e_i$ and  $e_j$ in $G$, the components $x_i$ and $x_j$ of any $x \in \ker(d^1)$ must agree under identification via the corresponding isomorphisms. Hence, the kernel of $d^1$ is contained in the subspace of $C^1(D)$ consisting of tuples $(x_1, \ldots, x_n)$ such that $x_r = x_s$ whenever $e_r$ and $e_s$ are parallel.

Thus, we may reduce to a diagram $D'$ by keeping exactly one crossing between each pair of circles in $s_0D$ and resolving all other crossings using the $0$-marker. The graph $G_{s_0}(D')$ is then $G^{\mathrm{red}}$. Now, this gives the natural isomorphism between $C^{1, -|s_0D|+1}(D)$ and $C^{1, -|s_0D|+1}(D')$.

Importantly, the above isomorphism restricts to an isomorphism of the reduced complexes, yielding the required result.

\end{proof}

\begin{proof}[Proof of \ref{thm: maintheorem5} (even case)]
We first consider the cases where $i = -n_-$ and $i \geq -n_- + 2$. In these ranges, we give an explicit description of the generators of the (unreduced) even Khovanov complex. We begin with the case $i = -n_-$, which splits into two subcases depending on whether the graph $G_0(D)$ is bipartite.

\begin{enumerate}
    \item \textbf{$G_0(D)$ is not bipartite}.  
    By \cite[Remark~6.4]{almost_extremeKH}, the (unreduced) even Khovanov complex has no cycles. Consequently, the reduced complex also has no cycles.

    \item \textbf{$G_0(D)$ is bipartite}.  This case proceeds identically to Lemma~\ref{lem: almost extreme even KH}, where the homological and quantum gradings are shifted by the presence of a negative crossing while the rest of the argument remains unchanged.
\end{enumerate}

We now consider the case $i \geq -n_- + 2$. In the quantum grading under consideration, each state $s$ with $|s| \geq 1$ contributes at most one generator to
$$
CKh_{e}^{-n_- + |s|,\, n - |s_0 D| + 3}(D).
$$

We describe precisely when such a generator appears.
\begin{enumerate}
    \item If $|s| = 1$, then $\Lambda_e(sD)$ contains a generator in this quantum grading, given by the tensor product of the element $a$ over all circles in the smoothing $sD$.
    \item If $|s| \geq 2$, a generator appears only if every lower state $s' < s$ with $|s'| \geq 2$ corresponds to a circle-splitting. In this case, the generator is uniquely determined and is again the tensor product of the element $a$ over all circles in the smoothing $sD$.
\end{enumerate}

Consequently, for $i \geq n_-+1$, the reduced chain complex $CKh_{e,\red}^{i,\, n_+ - 2n_- - |s_0 D| + 3}(D,p)$ coincides with the unreduced chain complex $CKh_{e}^{i,\, n_+ - 2n_- - |s_0 D| + 2}(D,p).$
The desired result now follows from \cite[Remark~6.4]{almost_extremeKH}.

The only case that remains is the one where $i=-n_-+1$. This follows by simply combining Lemma~\ref{lem: even almost extreme technical1} and Lemma~\ref{lem: even almost extreme technical2}.
At this point, we make essential use of the setup established at the beginning of this section.
\end{proof}

\begin{lemma}\label{lem: even almost extreme technical2}
The diagram $(D',p)$ constructed as above satisfies
\[
C_{\red}^{1,-|s_0D|+2}(D', p) = \Z^{p_1(D)}.
\]
\end{lemma}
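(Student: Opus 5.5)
We read the statement as the degree-one group of the reduced complex of $(D',p)$ in the grading $q'_{\red}=-|s_0D|+2$, computed through the complex itself. The chain group alone has rank $n'$, one generator per crossing of $D'$. The group that equals $\Z^{p_1(D)}$ is therefore the degree-one homology $H^{1,-|s_0D|+2}_{\red}(D',p)$, and this is what feeds into the proof of Theorem~\ref{thm: maintheorem5} alongside Lemma~\ref{lem: even almost extreme technical1}. Since $G_0(D')=G_0(D)^{\red}$ is a connected graph with $l=|s_0D|$ vertices and $n'$ edges, we have $p_1(D)=n'-l+1$. The plan is to write down the relevant chain groups in homological degrees $0,1,2$ explicitly and compute the homology in degree $1$.

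\textbf{Step 1: the chain groups.} In degree $0$ (the state $s_0$), the generators in this grading are the tensor products of all circles of $s_0D'$ except one, and the omitted circle must not be the basepoint circle. This gives $l-1$ generators $y_v$, one for each non-basepoint vertex $v$ of $G_0(D')$. In degree $1$, each state $s^i$ merges two circles, because $D'$ is $0$-adequate. The only generator in this grading is then $x_i$, the product of all $l-1$ circles, and it automatically contains the basepoint circle. This is the free group of rank $n'$. In degree $2$, the state $s^{ij}$ performs a second merge, since $D'$ has no parallel edges and so $c_j$ cannot split the circle just created by $c_i$. Applying a merge to $x_i$ produces a square of a circle, which is $0$, so the differential $d^1$ vanishes on this grading. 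Consequently every element of $C^1$ is a cycle.

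\textbf{Step 2: the image of $d^0$.} Each merge sends $y_v$ to $\pm x_i$ exactly when the edge $e_i$ is incident to $v$, because the merge reinserts the missing circle. Hence $d^0$ is, up to signs coming from the standard sign assignment, the transpose of the incidence matrix of $G_0(D')$ with the basepoint vertex deleted. We check that along each edge the two endpoint contributions carry opposite relative signs; this is the same sign bookkeeping that produced $\sigma(v)$ in Lemma~\ref{lem: almost extreme even KH}. Once this is verified, $d^0$ is the simplicial coboundary $C^0(G)\to C^1(G)$ restricted to cochains vanishing at the basepoint.

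\textbf{Step 3: conclusion.} For a connected graph, the coboundary restricted to cochains vanishing at one vertex is injective, and its cokernel is $H^1(G;\Z)\cong\Z^{p_1(G)}$, which is free. It follows that $H^{1,-|s_0D|+2}_{\red}(D',p)\cong\Z^{n'-l+1}=\Z^{p_1(D)}$.

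The main obstacle is Step 2. One must make sure that the standard sign assignment, combined with the even merge maps, produces exactly a signed incidence (coboundary) matrix. A bad sign could change the cokernel by torsion. I would settle this by rescaling each generator $x_i$ and $y_v$ by a sign, as in the proof of Lemma~\ref{lem: even almost extreme technical1}. If that fails, I would instead compare with the unreduced computation in \cite[Remark~6.4]{almost_extremeKH}.
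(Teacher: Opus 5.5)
Your Step~1 is correct, and computing $H^1$ as $\mathrm{coker}\,d^0$ is a sound plan: in this grading the degree-$2$ group of $D'$ is zero. The gap is in Step~2, where the sign check you defer actually fails.

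Every cube edge leaving $s_0$ has $\sgn_{s_0,s^i}=0$, and the even merge map is a homomorphism of commutative algebras. So for $e_i=\{u,w\}$, both $y_u$ and $y_w$ are sent to $+x_i$. Thus $d^0$ is the \emph{unsigned} incidence matrix with the basepoint column deleted, not a coboundary. The unreduced cycle $\sum_v\sigma(v)\,y_v$ of Lemma~\ref{lem: almost extreme even KH} shows exactly this: adjacent vertices need opposite $\sigma$ precisely because their contributions agree.

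Your proposed repair also fails. Rescaling $y_v\mapsto\epsilon_v y_v$ and $x_i\mapsto\eta_i x_i$ turns the two entries on an edge $uw$ with $u,w\neq v_0$ into $\epsilon_u\eta_i$ and $\epsilon_w\eta_i$. So you would need a proper $2$-colouring of $G_0(D')\setminus v_0$. This does not exist, for instance, for the standard diagram of the Borromean rings: there $G_0=K_4$ has no parallel edges, and deleting a vertex leaves a triangle. Hence the identification with $H^1(G;\Z)$ in Step~3 is unjustified exactly in the non-bipartite case, which is where the reduced statement has content. The fallback comparison with \cite[Remark~6.4]{almost_extremeKH} does not rescue it, since the unreduced cokernel there has $\Z_2$-torsion and the comparison would itself need the missing argument.

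The missing observation is that deleting the basepoint column already makes $d^0$ split injective, whatever the signs. Take a spanning tree $T$ of $G_0(D')$ (connected, since the link is non-split), rooted at the basepoint vertex $v_0$. For $v\neq v_0$, let $e_v$ be the edge of $T$ joining $v$ to its parent. Then $v\mapsto e_v$ is a bijection onto $E(T)$. Moreover, $d^0(y_v)$ has coefficient $1$ on $x_{e_v}$, and the only other tree edges it involves are the $e_w$ with $w$ a child of $v$. Order the vertices by distance from $v_0$. The composite of $d^0$ with the projection onto the span of $\{x_e : e\in E(T)\}$ is then unitriangular, hence invertible over $\Z$. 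So $\mathrm{coker}\,d^0$ is free on $\{x_e : e\notin E(T)\}$, of rank $n'-l+1=p_1(D)$.

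With this replacing Steps~2--3, you get a correct direct proof. It differs from the paper's proof, which inducts on $p_1(D')$ through the skein exact sequence for $D_0$ and $D_1$, using the extreme-grading computation for $D_1$ and degree-$0$ vanishing for $D_0$. Your route has the advantage of showing explicitly why the basepoint removes the dependence on bipartiteness.
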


\begin{proof}
	The proof of this lemma is essentially the same as that of Lemma~\ref{lem: reduced even KH last_group}, with only minor modifications. We proceed by induction on $p_1(D')$.
	\begin{itemize}
		\item If $p_1(D') = 0$, then $G_0(D')$ is a tree, so $D'$ represents the unknot. 
		By construction, the diagram $D'$ requires that any two Seifert circles share at most a single crossing, from which the claim follows immediately.
		\item Now suppose that $p_1(D') > 0$. Let $v$ be a crossing of $D'$, and consider its $0$-smoothing $D_0$ and $1$-smoothing $D_1$. Then, there is a long exact sequence of the complexes $C^{i,j}_{\red}$
		\[
		\longrightarrow 
		C_{\mathrm{red}}^{0, -|s_0D_1|+1}(D_1,p)
		\longrightarrow C_{\mathrm{red}}^{1, -|s_0D'| + 3}(D',p)
		\longrightarrow C_{\mathrm{red}}^{1, -|s_0D_0| + 3}(D_0,p) \longrightarrow.
		\]
		\item Since $D_0$ is also $0$-adequate, the induction hypothesis applies. Combining this with the results established already in the previous proof,
		\[
		C_{\mathrm{red}}^{1, -|s_0D_0| + 3}(D_0,p) \cong \mathbb{Z}^{\,p_1(D_0)} = \mathbb{Z}^{\,p_1(D') - 1}, \quad
		C_{e,\mathrm{red}}^{0, -|s_0D_0| + 3}(D_0,p) = 0.
		\] 
		Additionally, we have by Proposition~\ref{prop: extreme even KH complex}
		\[
		C_{\mathrm{red}}^{0,-|s_0D_1|+1}(D_1,p) \cong \mathbb{Z}, \quad C_{\mathrm{red}}^{1,-|s_0D_1|+1}(D_1,p) \cong 0,
		\]
	\end{itemize}
	Combining these facts with the long exact sequence, we obtain the short exact sequence
	\[
	0 \longrightarrow \mathbb{Z} \longrightarrow C_{\mathrm{red}}^{1, -|s_0D'| + 3}(D',p) \longrightarrow \mathbb{Z}^{\,p_1(D') - 1} \longrightarrow 0.
	\]
	This completes the induction step, proving the desired result.
\end{proof}

%% file: RedOddKH_PositiveLinks.tex
\section{Reduced odd Khovanov homology of positive links}\label{sec: Reduced odd Khovanov homology of positive links}
In this section, we establish Theorem~\ref{thm: maintheorem1} for odd Khovanov homology. The arguments largely parallel those of Section~\ref{sec: Reduced even Khovanov homology of positive links}.
The main objective of this section is to identify precisely where the sign choices in odd Khovanov homology play a role, and to demonstrate that the proofs adapt accordingly.

Let $L$ be a non-split positive link with a base point $p$. Choose a positive diagram $D$ of $L$ and, without loss of generality, assume that $p$ lies away from all crossings and their resolutions. Construct the diagram $D'$ as in Section~\ref{sec: Reduced odd Khovanov homology of positive links}. We also fix a decoration, i.e., an orientation at each crossing. The orientations are chosen so that, in the resolution $s_0D$, all arcs connecting a given pair of Seifert circles are consistently oriented from one circle to the other. 

\begin{proposition}\label{prop: odd technical}
The reduced odd Khovanov homology of the diagram $(D', p)$ constructed above satisfies
\[
Kh^{1}_{o, \mathrm{red}}(D,p) = Kh^{1}_{o, \mathrm{red}}(D',p).
\]

\end{proposition}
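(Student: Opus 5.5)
We follow the strategy of Proposition~\ref{prop: even technical} and Lemma~\ref{lem: even almost extreme technical1}: show that every cycle in homological degree $1$ is constant on parallel edges, and then identify it with a cycle for $D'$. As before, no crossing is negative, so the homological grading is $|s|$. We write
\[
CKh^1_o(D)=\bigoplus_{i=1}^n A_i, \qquad A_i=\Lambda_o(s^iD),
\qquad
CKh^2_o(D)=\bigoplus_{i<j} A_{ij}.
\]
It suffices to work in the unreduced complex and restrict afterwards. The splitting \eqref{eqn:oddKH_split} and the inclusion $a\otimes(\cdot)$ are compatible with the summands $A_i$, so the conclusion passes to $CKh_{o,\red}$.

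\textbf{Step 1 (the square for a pair of parallel crossings).} Take crossings $c_i,c_j$ with $i<j$ whose edges $e_i,e_j$ are parallel in $G_0(D)$. By $0$-adequacy and positivity, $s^iD$ and $s^jD$ are each obtained from $s_0D$ by merging the two Seifert circles into one circle $a$. The resolution $s^{ij}D$ is obtained from this by splitting $a$ into two circles $a_p,a_q$.

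The decoration was chosen so that both arcs run from the same Seifert circle to the other. Hence, in $s^{ij}D$, the arcs at $c_i$ and at $c_j$ both run from the same circle (say $a_p$) to the other ($a_q$). The unsigned split maps are therefore equal:
\[
\fF'_o(\phi_{s^i,s^{ij}})(x)=\fF'_o(\phi_{s^j,s^{ij}})(x)=(a_p-a_q)\wedge x .
\]
The total component of $d^1$ into $A_{ij}$ is then
\[
(-1)^{\epsilon_1+\sgn_1}(a_p-a_q)\wedge x_i+(-1)^{\epsilon_2+\sgn_2}(a_p-a_q)\wedge x_j .
\]
Here $x_i,x_j\in\Lambda_o(\,\cdot\,)$ are identified via the common circles, with $a$ corresponding to $a_p$ or $a_q$. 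Wedging with $a_p-a_q$ is injective on the subalgebra generated by $a$ and the remaining circles, because $a_p-a_q$ is part of a basis of the degree-one generators. So $d^1x=0$ forces $x_i=\pm x_j$, with a sign determined by the edge assignment and the standard signs.

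\textbf{Step 2 (controlling the sign).} This is the main obstacle. The face $s_0\to s^i,s^j\to s^{ij}$ is a merge–merge versus merge–split square. I expect it to be of type C or type X/Y; I would check this against Figure~\ref{fig:SquareTypes}. Since $\delta\epsilon$ on this face is prescribed, the product of the four edge signs is fixed, and the edges out of $s_0$ contribute a known relative sign. So the relation between $x_i$ and $x_j$ is a well-defined sign $\eta_{ij}$, not the zero map.

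Then one of two things happens. Either $\eta_{ij}$ is consistent along chains of parallel edges, because the same reasoning applies to each pair $(i,k)$, $(j,k)$ and the cocycle condition on the $3$-dimensional face forces $\eta_{ik}=\eta_{ij}\eta_{jk}$. Or we can change the edge assignment by a coboundary concentrated near $s_0$ to make all $\eta_{ij}=+1$. In either case, $\ker d^1$ embeds in the subgroup of tuples determined by one representative per parallel class.

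\textbf{Step 3 (identification with $D'$).} The complex of $D'$ is the subcube of $D$ on which the redundant parallel crossings are fixed at the $0$-marker. Both the odd edge assignment and the standard sign assignment restrict to valid choices on this subcube. Odd Khovanov homology is independent of these choices, so we may use the restricted ones.

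We then define a map $CKh^{\le 1}_o(D')\to CKh^{\le1}_o(D)$ by sending the summand of a kept crossing diagonally, with signs $\eta$, to all crossings of its parallel class. This map is a chain map in degrees $0\to1\to2$. By Step 2 it induces a bijection on $1$-cycles. On $1$-boundaries it is also a bijection, since $d^0$ of the single $s_0$ summand has exactly this diagonal form with the same signs. The reason is that the face square in Step 1 anticommutes, which is precisely how $\eta$ is determined.

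Hence $Kh^1_{o,\red}(D,p)\cong Kh^1_{o,\red}(D',p)$.
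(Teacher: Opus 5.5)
Your argument is essentially the paper's (constrain $\ker d^1$ through the components into $\Lambda_o(s^{ij}D)$ for parallel $c_i,c_j$, use injectivity of wedging with $a_p-a_q$ to get $x_i=\pm x_j$, then pass to $D'$), and the remark at the end of your Step~3 adds something the paper lacks: $\partial^2=0$ on the square at $s_0$ forces $\eta_{ij}$ to be the product of the total signs on the edges $s_0\to s^i$ and $s_0\to s^j$, which settles the consistency question you left open in Step~2 and gives the identification with $D'$ that the paper only asserts. One correction: with the chosen decoration the dual arcs at $c_i$ and $c_j$ in $s^{ij}D$ point in \emph{opposite} directions, so the unsigned split maps are negatives of each other and the face is of type~A, as the paper says (types X and Y are the one-circle ladybug faces with vanishing composites and cannot occur here); this does not affect your proof, since you only use that the two maps agree up to sign.
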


\begin{proof}
	The proof is analogous to that of Proposition~\ref{prop: even technical}, and we use the notation for states, state graphs, and the associated quantities introduced there. As in that proof, we suppress quantum grading shifts throughout.
	
	As recalled in Section~\ref{sec: reduced Khovanov homology}, there is a natural inclusion of chain complexes
	\[
	CKh^{i}_{o,\mathrm{red}}(D,p) \hookrightarrow CKh^{i}_o(D).
	\]
	We therefore describe cycles in \( CKh^{i}_o(D) \); the same description applies to the reduced complex.

	The group \( CKh^1_o(D) \) is generated by states with exactly one \(1\)-marker, and decomposes as
	\[
	CKh^1_o(D) = \bigoplus_{i=1}^n B_i,
	\]
	where \( B_i = \Lambda_o(s^iD)\) and the state \( s^i \) assigns a \(1\)-marker at \( c_i \) and \(0\)-markers elsewhere. 
	Similarly, the group \( CKh^2(D) \) consists of states with exactly two $1$-markers
	\[
	CKh^2_o(D) = \bigoplus_{1 \leq i < j \leq n} B_{ij},
	\]
	where \( B_{ij} = \Lambda_o(s^{ij}D)\) and the state $s^{ij}$ assigns $1$-marker at crossings \( c_i \) and \( c_j \) and $0$-markers elsewhere. 
	The differential \( d^1 \) arises from the maps 
\[
\delta_{kij} \colon B_k \to B_{ij},
\]
in the cube of resolutions. We have by definition that \( \delta_{kij} = 0 \) whenever \( k \neq i, j \), as there is no edge connecting the corresponding states in the cube. The rest of the proof boils down to analysing this map when the edges \( e_i \) and \( e_j \) are parallel. 

The only non-zero maps into  \( B_{ij} \) are $\delta_{iij} \colon B_i \to B_{ij}$, $\delta_{jij} \colon B_j \to B_{ij}$, see the following commutative diagram.
\begin{center}
	\begin{xy}
		(0,90)*+{}="empt";(30,90)*+{CKh_o^1(D)}="D1"; (90,90)*+{CKh_o^2(D)}="D2";%
		(30,70)*+{B_i\oplus B_j}="AiAj"; (90,70)*+{B_{ij}}="Aij";%
		{\ar@{->}^{d^1} "D1";"D2"};
		{\ar@{->}_{\pi_i\oplus \pi_j} "D1";"AiAj"};
		{\ar@{->}^{\pi_{ij}} "D2";"Aij"};
		{\ar@{->}^{\delta_{iij}\oplus\delta_{jij}} "AiAj";"Aij"};
	\end{xy}
\end{center}
Here, $\pi_i$, $\pi_j$, and $\pi_{ij}$ denote the natural projections onto the respective summands $B_i$, $B_j$, and $B_{ij}$. These smoothings share $l-2$ common circles, which we identify when comparing the corresponding states and their resolutions. With this identification, we obtain the following description of the maps.
\begin{align*}
\delta_{iij}(x) &= \phantom{-} (v_p-v_q)\otimes x,\\
\delta_{jij}(x) &= - (v_p-v_q) \otimes x,
\end{align*}
where $p$ and $q$ are the two circles in $s^{ij}$ that are not common, and the minus sign in $\delta_{jij}$ arises because, with our choice of decoration, these crossings form a type-A square (see Figure~\ref{fig:SquareTypes}).

To construct the chain complex, we first fix an edge assignment and a sign assignment so that each square anti-commutes. In a type-A square, a $(-1)$ sign appears on even number of edges. There are two cases, depending on whether both of $\delta_{iij}$ and $\delta_{jij}$ carry a sign or only one does. Since the proofs of these cases are essentially identical, we treat them simultaneously.

Let $x \in \ker(d^1)$, and write $x_i := \pi_i(x)$ and $x_j := \pi_j(x)$. For $k \neq i,j$, we have $\delta_{kij} = 0$, so the contribution of $d^1(x)$ to $A_{ij}$ is
\[
\delta_{iij}(x_i) \pm \delta_{jij}(x_j).
\]
The condition $\pi_{ij} \circ d^1(x) = 0$ then implies
\[
\delta_{iij}(x_i) \pm \delta_{jij}(x_j) = 0.
\]

Since $\delta_{iij}$ and $\delta_{jij}$ are injective, it follows that
\[
x_i = \pm x_j.
\]

Consequently, for any pair of parallel edges $e_i$ and $e_j$, the corresponding components $x_i$ and $x_j$ of $x \in \ker(d^1)$ must agree up to a sign. This sign is determined by the edge and sign assignments chosen for $D$. Therefore, $\ker(d^1)$ is contained in the subspace of $CKh^1_o(D,p)$ consisting of tuples $(x_1, \dots, x_n)$ satisfying
\[
x_r = \pm x_s \quad \text{whenever $e_r$ and $e_s$ are parallel,}
\]
with the signs determined by the chosen edge and sign assignments.

It follows that, for the purpose of computing $Kh^1_o(D)$, we may replace $D$ by a diagram $D'$ obtained by retaining exactly one crossing between each pair of circles in $G_0(D)$ and resolving all remaining crossings using the $0$-marker. In particular,
\[
Kh^1_o(D,p) \cong Kh^1_o(D',p).
\]
\end{proof}

\begin{lemma} \label{lem: extreme odd KH}
If $D$ is $0$-adequate link diagram with $n_+$ positive and $n_-$ negative crossings, then its reduced odd Khovanov homology satisfies the following.
\[
Kh_{o,\red}^{i,n_+ - 2n_- - |s_0 D| + 1}(D,p) =
\begin{cases}
	\mathbb{Z} & \text{if } i = -n_- \\
	0 & \text{otherwise.}
\end{cases}
\]
\end{lemma}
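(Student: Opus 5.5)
The plan is to mirror the proof of Lemma~\ref{lem: extreme even KH}. The point is that the argument only uses which generators lie in the extremal quantum grading, not the signs on the edge maps. The odd theory differs from the even one only through the edge assignment $\epsilon$ and the multiplication in $\Lambda_o$. Neither of these affects which generators sit in a given bigrading, or whether a particular component of the differential vanishes.

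First, identify the chain group in the minimal quantum grading. The quantum grading of a generator $x = a_{i_1}\wedge\cdots\wedge a_{i_k}\in\Lambda_o(sD)$ is $q(x) = |sD| - 2k + |s| + n_+ - 2n_-$. For the reduced complex it is $q(x)+1$, and $x$ must contain the circle $a_p$ through the basepoint. In the all-$0$ state $s_0$, the minimum is attained only by the top exterior power $x_0 = a_1\wedge\cdots\wedge a_{|s_0D|}$. This element contains $a_p$, so it lies in $CKh_{o,\red}$, and its reduced quantum grading is $n_+ - 2n_- - |s_0D| + 1$. Its homological grading is $-n_-$.

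Next, show that no other state contributes to this grading. Passing along an edge $s\leq_1 s'$ changes $|sD|$ by $\pm1$ and raises $|s|$ by $1$. Consequently, for any state the minimal possible value of $|sD|-2|sD|+|s| = |s|-|sD|$ is nondecreasing along edges. It is strictly increasing, by $2$, exactly on merges. By $0$-adequacy, every edge out of $s_0$ is a merge. Hence every state $s>s_0$ factors through some $s^i$ with $|s^i|-|s^iD| = |s_0|-|s_0D|+2$, and all its generators have quantum grading at least $2$ above the minimum.

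So the chain complex in this quantum grading is $\Z$, generated by $x_0$ in homological degree $-n_-$, with zero differential for degree reasons. This gives the claimed homology. Equivalently, one checks directly that $\partial_o x_0=0$: each merge map sends $x_0$ to a wedge containing a repeated circle, which is zero in $\Lambda_o$. The sign factors $(-1)^{\epsilon}$ and $(-1)^{\sgn}$ are irrelevant here.

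The only step needing care is that the subtlety of odd Khovanov homology does not enter. The reduced odd complex is defined as the subcomplex of elements divisible by $a_p$. One must confirm that $x_0$, written as a wedge with $a_p$ in some position, lies in this subcomplex; it does, up to a sign, which is harmless. With that checked, the argument is purely a grading count.
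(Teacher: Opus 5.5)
Your proposal is correct and follows essentially the same route as the paper, which simply reuses the even-case argument: the top exterior power $x_0$ at $s_0$ is the only generator in the minimal quantum grading, every state $s>s_0$ sits at least two quantum degrees higher by $0$-adequacy, so the complex is $\Z$ in degree $-n_-$. Your observation that $|s|-|sD|$ is nondecreasing along edges, and increases by $2$ on merges, just makes the paper's ``at least $2$ higher'' step explicit, and you correctly note that the odd sign conventions play no role.
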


\begin{proof}
	The argument of this proof is identical to that of Lemma~\ref{lem: extreme even KH}.
\end{proof}

\begin{lemma} \label{lem: almost extreme odd KH}
If $D$ is a positive diagram of $L$ with $n$ crossings, then its reduced odd Khovanov homology satisfies the following.
\[
Kh_{o,\red}^{0,n - |s_0 D| + 3}(D,p) =0.
\]
\end{lemma}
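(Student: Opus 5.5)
To prove that $Kh_{o,\red}^{0,n-|s_0D|+3}(D,p)=0$ for a positive diagram $D$, I will follow the proof of Lemma~\ref{lem: almost extreme even KH}, checking where the odd signs enter. Since $D$ is positive, homological grading $0$ corresponds to the single state $s_0$, so the relevant chain group is $\Lambda_o(s_0D)$ in the given quantum grading. In the reduced grading this quantum degree is attained exactly by generators with one circle left unmarked. Concretely, the generators are the wedges $a_p\wedge a_{1}\wedge\cdots\widehat{a_v}\cdots\wedge a_{|s_0D|}$, where $a_p$ is the circle containing $p$ and $v\neq p$ ranges over the remaining vertices of $G_0(D)$. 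There is nothing in homological degree $-1$, so the homology equals the kernel of $\partial_o$ on this group. The task is therefore to show that no nonzero $\Z$-combination $x=\sum_{v\neq p}\lambda_v\, y_v$ of these generators is a cycle.

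I will compute $\partial_o$ on each $y_v$ edge by edge. Since $D$ is $0$-adequate (Example), every edge $s_0\to s^i$ is a merge of the two circles $a,b$ joined by $c_i$. Up to the global signs $(-1)^{\epsilon+\sgn}$, the merge sends $a,b\mapsto$ the merged circle. Take an edge $e_i$ of $G_0(D)$ with endpoints $u,w$. If both $u,w\neq v$, the image of $y_v$ contains the merged circle twice, so it vanishes in $\Lambda_o$. Hence the component of $\partial_o x$ in $\Lambda_o(s^iD)$ is $\pm\lambda_u\,\bar y \pm\lambda_w\,\bar y$, where $\bar y$ is the common image, a generator of the free group $\Lambda_o(s^iD)$ in that degree. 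The special cases are when $u$ or $w$ equals the basepoint circle; there the corresponding $\lambda$ is absent, because the circle of $p$ is never omitted. For the basepoint case, if $e_i=\{p,w\}$, the component is $\pm\lambda_w\bar y$, which forces $\lambda_w=0$. For a general edge, the component forces $\lambda_u=\pm\lambda_w$. Since $D$ represents a non-split link, $G_0(D)$ is connected. Propagating from the vertices adjacent to $p$ along paths then gives $\lambda_v=0$ for all $v$. So $\ker\partial_o=0$ and the homology vanishes.

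The main obstacle I expect is sign bookkeeping: confirming that each edge component really is a single generator times $\pm\lambda_u\pm\lambda_w$ with both coefficients units. In the odd algebra, merging into $\Lambda_o$ reorders wedge factors and introduces signs; the edge assignment $\epsilon$ and $\sgn$ add further signs. However, all of these are units, and the argument only needs $\lambda_u=\pm\lambda_w$ and $\lambda_w=0$ at basepoint neighbours. So unlike the unreduced bipartiteness analysis, the exact signs never matter. I would state this explicitly and note that it holds for either type-X or type-Y edge assignment.
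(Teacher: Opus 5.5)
Your argument is correct, but it is organized differently from the paper's. The paper proves this lemma by saying it is identical to Lemma~\ref{lem: almost extreme even KH}. There, one imports from \cite[Remark~6.4]{almost_extremeKH} the whole kernel of $\partial$ on the unreduced group in bidegree $(0,n-|s_0D|+2)$: it is spanned by a single element with unit coefficients on every $y_v$. One then observes that no nonzero multiple of it lies in the reduced subgroup, because its coefficient on the term omitting the basepoint circle is $\pm1$.

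You never determine the unreduced kernel. Deleting that term from the basis turns each edge at the basepoint circle into a one-term equation $\pm\lambda_w\bar y=0$, and connectivity of $G_0(D)$ propagates the vanishing. The paper's route is shorter given the literature and also describes the unreduced group. Yours is self-contained and uses only that the edge coefficients are units. It therefore works verbatim for either edge assignment and over any coefficient ring, as Proposition~\ref{prop: general coefficent} later needs.

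This sign-independence is a real advantage in the odd theory, where the unreduced cycle is not the alternating-sign element carried over from the even case. For an edge joining the $u$-th and $w$-th circles ($u<w$), the merge images of $y_u$ and $y_w$ differ by $(-1)^{w-u-1}$ from reordering exterior factors. So the cycle is $\sum_v(-1)^v y_v$. For general $0$-adequate diagrams it exists whether or not $G_0(D)$ is bipartite, as \eqref{eqn:oddKH_split} together with Lemma~\ref{lem: extreme odd KH} also forces. The paper's conclusion survives, since this cycle still has a unit coefficient on the basepoint term, but your argument does not depend on it.

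You are also right to invoke non-splitness. It is a standing assumption of the section and is genuinely needed: for the crossingless diagram of the two-component unlink the group is $\Z$.
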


\begin{proof}
	The argument of this proof is identical to that of Lemma~\ref{lem: almost extreme even KH}.
\end{proof}

\begin{lemma}\label{lem: reduced odd KH last_group}
 The reduced odd Khovanov homology of the diagram $(D',p)$ above satisfies 
	\begin{align*}
		Kh^{1, j}_{o,\red}(D',p)=\begin{cases}
			\Z^{p_1(D')} &\textrm{ if } j=n'-|s_0D|+3,\\
			0 &\textrm{ otherwise,}
		\end{cases}
	\end{align*}
\end{lemma}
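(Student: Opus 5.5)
I will mirror the proof of Lemma~\ref{lem: reduced even KH last_group}, replacing each even-theory ingredient by its odd counterpart. There are two steps: first show that $Kh^{1,j}_{o,\red}(D',p)$ can be nonzero only in the single quantum grading $j = n'-|s_0D'|+3$, and then compute the group there by induction on $p_1(D')$ using the skein exact sequence.

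\textbf{Concentration.} Recall that $D'$ is a positive, alternating diagram of a non-split link $L'$, which we may assume reduced, and that $\sigma(L') = p_1(D') = n'-|s_0D'|+1$ by Equation~\eqref{eqn:signature_cyclomaticnumber_equality}. I will invoke the theorem of Ozsv\'ath--Rasmussen--Szab\'o that reduced odd Khovanov homology of a non-split alternating link is thin. It is supported on the single diagonal $j-2i=\sigma(L')$, is torsion-free, and has total rank $\det(L')$. At $i=1$ this forces $Kh^{1,j}_{o,\red}(D',p)=0$ unless $j=\sigma(L')+2=n'-|s_0D'|+3$. Since $|s_0D'|=|s_0D|$, this is the grading in the statement.

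\textbf{Induction.} I will prove $Kh^{1,p_1(D')+2}_{o,\red}(D',p)\cong\Z^{p_1(D')}$ by induction on $p_1(D')$.
\begin{itemize}
\item Base case. If $p_1(D')=0$, then $G_0(D')$ is a tree, $D'$ represents the unknot, and the claim is immediate.
\item Inductive step. Choose a crossing $v$ of $D'$ whose edge lies on a cycle of $G_0(D')$; one exists because $p_1(D')>0$. Let $D_0$ and $D_1$ be its $0$- and $1$-smoothings.
\item Skein sequence. Odd Khovanov homology, reduced included, is defined from a cube of resolutions in which the crossing $v$ is one coordinate direction. Hence $CKh_{o,\red}(D')$ is the mapping cone of a chain map from the (shifted) complex of $D_0$ to that of $D_1$, with signs coming from the edge assignment. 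Restricting the edge assignment of $D'$ to the two faces $v=0$ and $v=1$ gives valid edge assignments there, since $\delta\epsilon$ restricts to $\psi_X$ or $\psi_Y$ on each face. So the same long exact sequence with the same grading shifts as in the even case holds.
\item The $D_0$ term. The diagram $D_0$ is still positive, alternating and non-split, with $p_1(D_0)=p_1(D')-1$, so by induction its group is $\Z^{p_1(D')-1}$.
\item The $D_1$ term. The diagram $D_1$ is $0$-adequate, in fact adequate. Lemma~\ref{lem: extreme odd KH} gives a $\Z$ in the relevant extreme bidegree and $0$ one homological degree higher.
\item The connecting term. Lemma~\ref{lem: almost extreme odd KH} gives $Kh^{0,p_1(D_0)+2}_{o,\red}(D_0,p)=0$.
\end{itemize}
The long exact sequence then collapses to
\[
0\to\Z\to Kh^{1,p_1(D')+2}_{o,\red}(D',p)\to\Z^{p_1(D')-1}\to0 .
\]
This sequence splits because the right-hand term is free, which gives $\Z^{p_1(D')}$. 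The torsion-freeness from the concentration step confirms the answer independently.

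\textbf{Main obstacle.} The delicate step is checking that the extreme and almost-extreme inputs for $D_1$ and $D_0$ sit in exactly the bidegrees the sequence requires. In particular, $D_1$ may not be positive, so its writhe changes, and the homological and quantum shifts $\tfrac{w_1-w-1}{2}$ and $\tfrac{3(w_1-w)-1}{2}$ must be recomputed in the odd setting. I expect these shifts to be identical to the even case, since the gradings in both theories are defined by the same formulas. So I would carry the computation over verbatim, verify it on one small example such as the Hopf link or the trefoil, and spend the real checking on confirming that the odd sign conventions leave the mapping-cone description of the skein sequence intact.
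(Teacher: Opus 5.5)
Your proposal is correct and follows the paper's route: the paper's proof just says the argument for Lemma~\ref{lem: reduced even KH last_group} (Lee-type thinness, then a skein-sequence induction on $p_1(D')$ using the extreme and almost-extreme lemmas) carries over once thinness is known for reduced odd Khovanov homology, which is exactly the Ozsv\'ath--Rasmussen--Szab\'o input you use. Your choice of $v$ on a cycle of $G_0(D')$ is a useful precision the paper leaves implicit, since it is what keeps $D_0$ non-split with $p_1(D_0)=p_1(D')-1$.
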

\begin{proof}
	The proof is identical to that of Lemma~\ref{lem: reduced even KH last_group}, the only additional input being that Lee’s theorem extends to odd Khovanov homology.
\end{proof}

\begin{proof}[Proof of Theorem~\ref{thm: maintheorem1} (odd case)]
From Proposition~\ref{prop: odd technical} and Lemma~\ref{lem: reduced odd KH last_group}, we obtain that the reduced even Khovanov homology of $D$ in homological grading $1$ is $\mathbb{Z}^{\,p_1(D)}$. Thus, determining the precise quantum grading is all that remains.
The proof of the first part of the theorem is identical to the argument given for the even case in Section~\ref{sec: Reduced even Khovanov homology of positive links}.
\end{proof}

%% file: Extreme_AlmostextremeOddKH.tex
\section{Extreme and almost extreme reduced odd Khovanov homology} \label{sec: Extreme and almost extreme reduced odd Khovanov homology}
In this section, we discuss analogues of the results from \cite{almost_extremeKH} on almost extreme Khovanov homology in the setting of reduced odd Khovanov homology. We also establish a partial generalization of the results from Section~\ref{sec: Extreme reduced even Khovanov homology} to the odd theory.

As before, we pinpoint where the sign choices in the odd theory become relevant and verify that the arguments carry over.

\subsection{Extreme reduced odd Khovanov homology} \label{sec: Extreme reduced odd Khovanov homology}
\begin{proposition} \label{prop: extreme odd KH complex}
Given a link diagram $D$ with a base point $p$, the odd Khovanov chain complex and the reduced odd chain complex are related as follows.
\[
\left(CKh_{o,\mathrm{red}}^{i,\, n_+ - 2n_- - |s_0 D| + 1}(D,p), \partial_o \right) = \left( CKh_{o}^{i,\, n_+ - 2n_- - |s_0 D|}(D), \partial_o \right).
\]
\end{proposition}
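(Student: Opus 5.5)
Following the proof of Proposition~\ref{prop: extreme even KH complex}, I will show that in the extremal quantum grading the generators of the unreduced odd complex all lie in the reduced subcomplex. Once the generating sets agree, both complexes carry the restriction of the same differential $\partial_o$, so they coincide as chain complexes. The extra input needed in the odd setting is to check that the sign conventions (edge assignment $\epsilon$ and standard sign assignment $\sgn$) cause no discrepancy. This is automatic, since the reduced complex is by definition a subcomplex of $CKh_o(D)$ with the inherited differential.

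\textbf{Step 1: which generators lie in the extremal grading.} For a generator $x=a_1\wedge\cdots\wedge a_k\in\Lambda_o(sD)$, the unreduced quantum grading is $q(x)=|sD|-2k+|s|+n_+-2n_-$. Each $1$-smoothing changes the number of circles by exactly $\pm1$, so $|sD|\le |s_0D|+|s|$ and hence $|sD|+|s|\ge |sD|-|s|\ge \ldots$. The cleaner bound is $|sD|-2k\ge -|sD|$, which gives
\[
q(x)\ge |s|-|sD|+n_+-2n_-\ge -|s_0D|+n_+-2n_-.
\]
Equality holds exactly when two conditions are met. First, $k=|sD|$, meaning $x$ is the wedge of all circles of $sD$; this element is unique up to sign in the exterior algebra. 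Second, $|sD|=|s_0D|+|s|$, meaning every step along every monotone path from $s_0$ to $s$ is a split.

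\textbf{Step 2: these generators are reduced.} Each such generator contains as a wedge factor the circle $a_p$ through the basepoint. Up to reordering, which changes only a sign in $\Lambda_o$, it can therefore be written as $a_p\wedge y$ and lies in $CKh_{o,\red}(D,p)$. The reduced grading is $q+1$, so the degree $n_+-2n_--|s_0D|$ of the unreduced complex matches the degree $n_+-2n_--|s_0D|+1$ of the reduced one. Conversely, every reduced generator in that reduced degree comes from an unreduced generator of degree $n_+-2n_--|s_0D|$, which by Step 1 is of the form just described. The two graded pieces therefore have identical bases.

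\textbf{Step 3: the differentials agree.} Both differentials are restrictions of $\partial_o$, so they agree. One should still check that $\partial_o$ preserves this span. The odd split map $(a_1-a_2)\wedge y$, applied to a top wedge $a\wedge y'$, yields $\pm a_1\wedge a_2\wedge y'$ after using $a_1\wedge a_1=0$ and $a_2\wedge a_2=0$ under either embedding of $a$. The merge map kills the top wedge, because it sends $a_1\wedge a_2$ to $a\wedge a=0$. Both outcomes are consistent with the grading. I expect the only point needing care to be this sign and embedding check for the odd split map, namely confirming that the result is independent of whether $a$ is embedded as $a_1$ or as $a_2$. It is routine, since the two choices differ by $(a_1-a_2)\wedge(a_1-a_2)=0$.
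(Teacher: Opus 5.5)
Your proof is correct and follows the paper's route (the paper simply cites the even case): the generators in the extremal grading are exactly the top wedges of states reached from $s_0$ by splits only, and each contains the basepoint circle. Hence the reduced and unreduced graded pieces coincide as subgroups of $CKh_o(D)$ and carry the same restricted differential. Two small cleanups: delete the garbled chain of inequalities in Step~1, and note that the closure check in Step~3 is redundant, since $\partial_o$ preserves quantum grading.
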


\begin{proof}
The proof of this proposition is identical to that of Proposition~\ref{prop: extreme even KH complex}.
\end{proof}

Given a link diagram $D$ with a basepoint $p$, S.~Sarkar, C.~Scaduto and M.~Stoffregen~\cite{SSS_OddStableHomotopyType} constructed stable homotopy types 
\(\mathcal{X}_{o}^j(D)\) and \(\mathcal{X}^j_{o,\mathrm{red}}(D,p)\) for each quantum grading $j$, whose homology recovers the (unreduced and reduced) odd Khovanov homology, respectively.
\[
\widetilde{H}_*(\mathcal{X}_o^j(D)) \cong Kh_o^{*,j}(D), 
\qquad
\widetilde{H}_*(\mathcal{X}^j_{o, \mathrm{red}}(D,p)) \cong Kh^{*,j}_{o,\red}(D).
\]

The isomorphism of chain complexes established in Proposition~\ref{prop: extreme odd KH complex} immediately yields the following result.

\begin{theorem}
Let $D$ be a link diagram with basepoint $p$. There is a stable homotopy equivalence at the minimal quantum grading:
\[
\cX^{n_+ - 2n_- - |s_0 D| + 1}_{o,\red}(D,p) \;\simeq\; \cX_o^{n_+ - 2n_- - |s_0 D|}(D),
\]
where $\cX^{j}_{o, \red}(D,p)$ and $\cX_o^{j}(D)$ reduced and unreduced odd Khovanov spectrum at quantum grading $j$, respectively.
\end{theorem}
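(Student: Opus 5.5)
The odd statement will follow the even one, Theorem~\ref{thm:extreme_evenstablehomotopy}, with Proposition~\ref{prop: extreme odd KH complex} standing in for its even counterpart. Recall from \cite{SSS_OddStableHomotopyType} that $\cX_o^j(D)$ and $\cX^j_{o,\red}(D,p)$ are built by realizing a functor from the cube category $\underline{2}^n$ to the signed Burnside category. That functor lifts the odd Khovanov functor $\fF_o$, together with the chosen edge assignment $\epsilon$ and the standard sign assignment. It splits as a coproduct of subfunctors indexed by the quantum grading. The reduced functor at grading $j+1$ is the subfunctor spanned by generators containing the circle through $p$. So it suffices to identify the two grading-summands as functors to the signed Burnside category, not merely their linearizations.

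First, I would redo the generator count from the proof of Proposition~\ref{prop: extreme even KH complex} at the level of the functor on objects. The extreme quantum grading $n_+-2n_--|s_0D|$ is attained at a state $s$ only when $s\geq s_0$ can be reached by circle-splitting $1$-smoothings only. In that case the unique generator is the full wedge product of all circles of $sD$, which contains the basepoint circle. Thus the set over each vertex of the cube is the same singleton or empty set for both functors. The rule $q_{\red}=q+1$ accounts exactly for the grading shift.

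Second, I would compare the morphisms. Each edge of this subcube is a split sending the top generator to $\pm$ the top generator, and the Burnside correspondences are the one-element spans carrying that sign. The reduced construction restricts the same correspondences, with the same signs from $\epsilon$ and $\sgn$, so the two functors to the signed Burnside category agree on edges. Because both realizations are fixed by the cube functors, they must also agree on the higher coherence data over $2$-faces. Hence both subfunctors are literally the same functor, and their realizations give the stated stable homotopy equivalence.

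I expect the main obstacle to be the second step: verifying that the reduced odd spectrum of \cite{SSS_OddStableHomotopyType} really is the realization of the restricted subfunctor, with the same sign data and the same choice of $2$-face coherences. In the even case this is automatic, but in the odd case the construction uses a $\Z_2$-equivariant signed refinement, and the reduced version may be defined through a splitting or a cofiber. I would first check the definition in \cite{SSS_OddStableHomotopyType} directly. If the reduced spectrum arises instead from the splitting \eqref{eqn:oddKH_split} at the spectrum level, I would argue that in the extreme grading the other summand, $\cX^{j-1}_{o,\red}$, lies below the minimal reduced grading and is therefore contractible. That would give the equivalence in this case too.
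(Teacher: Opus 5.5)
Your proposal is correct and takes the paper's route. The paper's proof simply defers to Theorem~\ref{thm:extreme_evenstablehomotopy}: both spectra are realizations of cube functors to the (signed) Burnside category that split by quantum grading, and Proposition~\ref{prop: extreme odd KH complex} identifies the two extremal summands. Your extra checks make explicit what the paper leaves implicit: singleton-or-empty sets at each vertex, split-only edges carrying one-element signed spans (so the $2$-face data is forced), and the fallback through the spectrum-level splitting, where $\cX^{j-1}_{o,\red}$ is trivial because it lies below the minimal reduced grading.
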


\begin{proof}
The proof is identical to that of Theorem~\ref{thm:extreme_evenstablehomotopy}.
\end{proof}


\subsection{Almost extreme reduced odd Khovanov homology}\label{sec: Almost extreme reduced odd Khovanov homology}
This section is dedicated to proving Theorem~\ref{thm: maintheorem5} for odd Khovanov homology.

\begin{proof}[Proof of Theorem~\ref{thm: maintheorem5} (odd case)]
We first consider the cases where $i = -n_-$ and $i \geq -n_- + 2$. We begin with the case $i = -n_-$, which splits into two subcases depending on whether the graph $G_0(D)$ is bipartite.

\begin{enumerate}
	\item \textbf{$G_0(D)$ is bipartite}.  
	There is a unique generating cycle in the unreduced odd Khovanov complex. It is given by
	\[
	\sum_{v \in V_{G_0(D)}} \sigma(v)\, x_1 \otimes \cdots \widehat{x_v} \cdots \otimes x_{|s_0|-1},
	\]
	where $\sigma \colon V_{G_0(D)} \to \{\pm 1\}$ assigns opposite signs to any pair of adjacent vertices, and $\widehat{x_v}$ denotes omission of the corresponding tensor factor.

	This form is dictated by the quantum grading, which requires each generator to omit exactly one factor $x_v$, where $v\in V_{G_0(D)}$. Since all summands of the differential arise from merge maps, if two vertices $v$ and $w$ are joined by an edge corresponding to a crossing $c$, then omitting either $x_v$ or $x_w$ yields the same generator in $\fF_o(s)$, where $s$ is the state such that $s(c)=1$ and $s(c')=0$ for all other crossings $c'$. Thus, the corresponding terms must cancel, which is achieved by assigning opposite signs to adjacent vertices. This determines $\sigma$ uniquely up to an overall sign.
	

    However, this element does not lie in the subcomplex
    $$CKh_{o,\red}^{-n_-,\, n_+ - 2n_- - |s_0 D| + 3}(D,p) \hookrightarrow CKh_{o}^{-n_-,\, n_+ - 2n_- - |s_0 D| + 2}(D,p),$$
    and therefore does not contribute to the reduced odd Khovanov homology.

    \item \textbf{$G_0(D)$ is not bipartite}.  
    The unreduced odd Khovanov complex admits no nontrivial cycles. Indeed, the description of potential cycles would require the existence of a function 
	\[
	\sigma \colon V_{G_0(D)} \to \{\pm 1\}
	\]
	that assigns opposite signs to any pair of adjacent vertices. However, no such assignment exists if $G_0(D)$ is not bipartite, and hence it is impossible to arrange cancellations among the terms in the differential. It follows that no nontrivial cycle can occur. Consequently, the reduced complex also admits no cycles.
\end{enumerate}

We now turn to the case $i\geq -n_-+2$. We first construct an isomorphism of complexes
\begin{equation}\label{eqn:odd_even_isomorphism}
CKh_{e}^{-n_- + |s|,\, n_+-2n_- - |s_0 D| + 3}(D) \cong CKh_{o}^{-n_- + |s|,\, n_+ - 2n_- - |s_0 D| + 3}(D)\quad \text{for}\ |s|\geq 1.
\end{equation}
Assuming this isomorphism, we can complete the proof for $i\geq -n_-+2$. Indeed, the description of the generators for $s\geq1$ given in the proof of Theorem~\ref{thm: maintheorem5} in Section~\ref{sec: Almost extreme reduced even Khovanov homology} carries over to the odd Khovanov setting. In particular, for $i\geq1$, the reduced chain complex $CKh_{o,\red}^{i,n_+-2n_--|s_0D|+3}(D,p)$ coincides with the corresponding unreduced chain complex $CKh_{o}^{i,n_+-2n_--|s_0D|+2}(D,p)$.
The result now follows from Theorem~\ref{thm: maintheorem5} in the even case and Equation~\eqref{eqn:odd_even_isomorphism}.

We construct an isomorphism of complexes
\[
CKh_{e}^{-n_- + |s|,\, n_+ - 2n_- - |s_0 D| + 3}(D) \cong CKh_{o}^{-n_- + |s|,\, n_+ -2n_- - |s_0 D| + 3}(D)
\]
for $|s| \geq 1$. 
Fix orientations on the crossings so that, in the resolution $s_0D$, all arcs connecting any given pair of circles in $G_0(D)$ are directed consistently from one circle to the other. We now describe a compatible ordering of the circles in $sD$ for $|s|\geq 1$. This ordering allows us to write down explicit generators in the quantum grading $n_+ -2n_- - |s_0 D| + 3$. 


Given a state $s$ with $|s|=1$, there is a unique generator in $\fF_o(sD)$ in the required quantum grading, given by the tensor product of the elements $a$ over all circles in the smoothing $sD$. The boundary of this generator has nonzero contributions only in those summands $\fF_o(s'D)$ where $s'$ is obtained by resolving an additional crossing parallel to the one already resolved.

Consequently, when restricting to the complex
\[
CKh_{o}^{-n_- + |s|,\, n_+ -2n_- - |s_0 D| + 3}(D)
\quad \text{for } |s| \geq 1,
\]
the complex decomposes into summands corresponding to crossings between fixed pairs of circles. It therefore suffices to carry out the construction for crossings between a fixed pair of circles.

Fix such a pair. We first choose and fix an ordering of all crossings not occurring between these two circles. Among the crossings between the chosen pair, select one as a reference and order the remaining crossings as illustrated in Figure~\ref{fig:ordering1}.

\begin{figure}[h!]
	\centering
	\includegraphics[width=.25\textwidth]{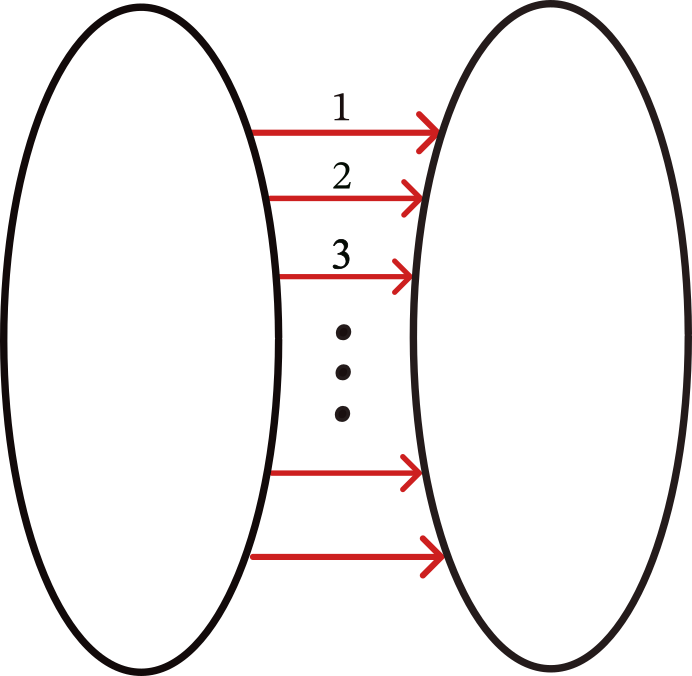}
	\caption{Ordering of crossings.}
	\label{fig:ordering1}
\end{figure}

With these conventions, we define the ordering of circles in $sD$ as follows. For $|s|=1$, the circle obtained by merging the two chosen circles is placed first, and the remaining circles follow in the fixed order. For $|s|=2$, we label the newly created circle by $x_2$ and the circle inherited from the previous merging by $x_1$, so that the ordering is $x_1, x_2$, followed by the remaining circles in the fixed order (See Figure~\ref{fig:ordering2}).

\begin{figure}[h!]
	\centering
	\includegraphics[width=.25\textwidth]{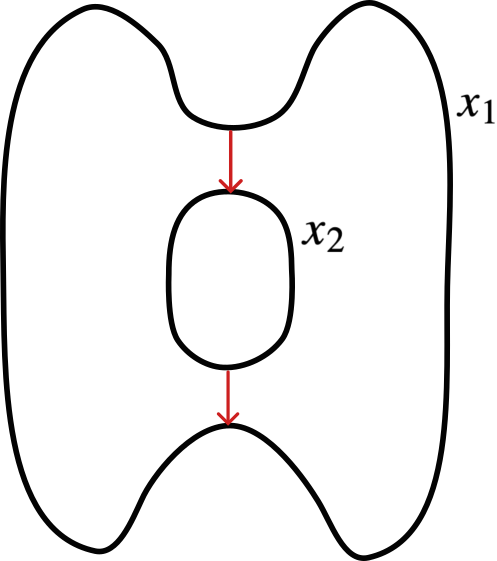}
	\caption{Ordering of circles.}
	\label{fig:ordering2}
\end{figure}

These choices determine the required identifications, and with respect to these orderings, the induced isomorphism between the even and odd complexes commutes with the boundary maps for $|s|=1,2$.

We proceed in the same manner for the remaining states with $|s|>2$. The reference crossings chosen for each pair of Seifert circles determine an ordering of the circles in each state. Specifically, we place the outermost circle first and order the remaining circles from top to bottom, with the reference crossings determining this order. The resulting isomorphisms between the even and odd complexes commute with the boundary maps for every state with $|s|>1$.

The Seifert circles need not be arranged as in Figure~\ref{fig:ordering2}; one may lie inside another. The same argument applies in this case. We orient all crossings so that the corresponding arcs are consistently directed from one Seifert circle to the other. We then choose the ordering of the crossings so that, in the $1$-resolution, the induced directions on the resulting arcs are compatible with this ordering. With this choice, the argument above carries over unchanged.

The only case that remains is the one where $i=-n_-+1$. This follows by simply combining Lemmas~\ref{lem: odd almost extreme technical1} and \ref{lem: odd almost extreme technical2}.
\end{proof}

Construct the diagram $D'$ as in Section~\ref{sec: Almost extreme reduced even Khovanov homology}. We then define the complexes $C_o^{i,j}(D)$ and $C^{i,j}_{o,\red}(D,p)$ as in the same section, replacing the even Khovanov complexes with their odd counterparts.

\begin{lemma}\label{lem: odd almost extreme technical1}
Given a $0$-adequate diagram $D$ with a basepoint $p$ and $(D',p)$ constructed as above, we have 
\[
C_{o,\red}^{1,-|s_0D|+2}(D, p) \cong C_{o,\red}^{1,-|s_0D|+2}(D', p).
\]
\end{lemma}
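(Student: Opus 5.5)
We follow the proof of Lemma~\ref{lem: even almost extreme technical1}, replacing the even Khovanov functor by the odd one. The only new ingredient is the sign bookkeeping already carried out in Proposition~\ref{prop: odd technical}. Fix decorations so that, in $s_0D$, all arcs joining a given pair of circles point the same way. Since $C^{i,j+1}_{o,\red}(D,p)\hookrightarrow C^{i,j}_o(D)$ is a subcomplex, it suffices to describe $\ker(d^1)$ on the unreduced side and then restrict.

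Decompose $C^1_o(D)=\bigoplus_k B_k$ and $C^2_o(D)=\bigoplus_{i<j}B_{ij}$, with $B_k=\Lambda_o(s^kD)$ and $B_{ij}=\Lambda_o(s^{ij}D)$. First, by $0$-adequacy each single $1$-smoothing merges two circles of $s_0D$. Hence, if $e_i,e_j$ are parallel edges of $G_0(D)$, the state $s^{ij}D$ again has $l=|s_0D|$ circles: the second smoothing splits the merged circle. The only nonzero components of $d^1$ into $B_{ij}$ are $\delta_{iij}$ and $\delta_{jij}$. Identifying the $l-2$ shared circles, both are split maps $x\mapsto \pm(v_p-v_q)\wedge x$. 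The relative sign comes from the face being of type A for our decoration, together with the chosen edge assignment $\epsilon$ and the standard sign assignment.

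Second, we need injectivity of $x\mapsto (v_p-v_q)\wedge x$ on $B_i$. Write $\Lambda_o(s^{ij}D)=\Lambda_o(\{v_p-v_q\})\otimes\Lambda_o(\text{rest})$ by a change of basis; $B_i$ embeds as the part not involving $v_p-v_q$, so wedging with $v_p-v_q$ is injective. From $\pi_{ij}d^1x=0$ we then get $x_i=\pm x_j$ for parallel edges, with the sign fixed by the assignments and independent of $x$. This is the only step where the odd theory needs care. I expect it to be the main obstacle: one must make sure the sign is consistent across a whole parallel class. When a class has three or more crossings, the pairwise relations $x_i=\epsilon_{ij}x_j$ must be compatible. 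This follows from anticommutativity of the cubes: the three-crossing subcube forces $\epsilon_{ij}\epsilon_{jk}=\epsilon_{ik}$, or else $x=0$, which only shrinks the kernel further. If needed, I would instead reorder the crossings within a parallel class, as in Figure~\ref{fig:ordering1}, to make all these signs $+1$ explicitly.

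Third, consider the map $C^1_o(D')\to C^1_o(D)$ sending a component at the retained crossing $c$ to the signed tuple supported on $c$'s parallel class. It lands in the subspace cut out above and commutes with the differentials up to the fixed signs. This gives the identification of cycles, and restricting to elements containing the basepoint circle gives the identification of the reduced complexes. Finally, we check gradings: $|s_0D'|=|s_0D|$, and the unnormalized gradings $h'=|s|$, $q'$ do not depend on the number of crossings. The isomorphism is therefore in bidegree $(1,-|s_0D|+2)$, exactly as in the even case.
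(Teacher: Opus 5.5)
Your proposal is correct and takes the same route as the paper, whose proof says only that the argument of Proposition~\ref{prop: odd technical} goes through with the modifications of Lemma~\ref{lem: even almost extreme technical1}; you add useful detail, namely injectivity of $(v_p-v_q)\wedge(\cdot)$ via a change of basis and the explicit comparison map from $D'$. One correction: an inconsistent sign pattern forcing $x=0$ would not be harmless, since it would make the kernel for $D$ strictly smaller and break the isomorphism. This cannot happen, however, because $d^1\circ d^0=0$, applied to the element of $C^0$ omitting one endpoint circle of the class (whose image is nonzero on every crossing of that class), forces $\epsilon_{ij}\epsilon_{jk}=\epsilon_{ik}$.
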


\begin{proof}
 The proof of this lemma is identical to that of Proposition~\ref{prop: odd technical}, with minor modifications as in Lemma~\ref{lem: even almost extreme technical1}.
\end{proof}

\begin{lemma}\label{lem: odd almost extreme technical2}
The diagram $(D',p)$ constructed as above has the following property
\[
C_{o,\red}^{1,-|s_0D|+2}(D', p) = \Z^{p_1(D)}
\]
\end{lemma}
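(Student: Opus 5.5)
The plan is to read the left-hand side the way it is used in the proof of Theorem~\ref{thm: maintheorem5} (odd case). There, Lemma~\ref{lem: odd almost extreme technical1} together with the present lemma must produce the group $\Z^{p_1(D)}$ in homological degree $1$ of the almost-extreme grading. So I will describe the reduced odd complex of $(D',p)$ in that quantum grading explicitly, generator by generator. I will then show that the group in degree $1$ modulo the image of degree $0$ is free of rank $p_1(D)$.

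One caveat: the raw degree-$1$ group has rank $n'$, not $p_1(D)$. The statement therefore only holds as the cokernel of $d^0$ (equivalently, the degree-$1$ homology), and that is what I will prove. The grading shift differs from the formula by one under the stated conventions; I take the grading to be the almost-extreme one of Theorem~\ref{thm: maintheorem5}.

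Write $l=|s_0D'|=|s_0D|$ and $G=G_0(D')=G_0(D)^{\red}$. The graph $G$ is connected, since the link is non-split, and has no multi-edges, so $p_1(D)=n'-l+1$.

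\textbf{Degree $0$.} In the almost-extreme reduced grading, the generators at $s_0$ are the wedge products of $l-1$ of the $l$ circles in which the basepoint circle $a_p$ is present. There are exactly $l-1$ of them, $y_v$ for $v\neq p$, where $y_v$ omits the circle $v$.

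\textbf{Degree $1$.} For each crossing $c$ of $D'$, the state $s^c$ merges the two circles at the ends of the edge $e_c$, because $D'$ is $0$-adequate. This state contributes exactly one generator $z_c$, the product of all $l-1$ circles of $s^cD'$. It automatically contains the basepoint circle, so it lies in the reduced complex. This gives $n'$ generators.

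\textbf{Degree $2$.} Because $G$ has no parallel edges, two $1$-markers perform two merges. Hence $|sD'|=l-2$, and the quantum grading is already too low. The degree-$2$ group vanishes in this grading, so $d^1=0$ and the degree-$1$ homology is $\operatorname{coker}(d^0)$.

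Next I compute $d^0$. The merge map at $c$ is an algebra map sending both endpoint circles to the merged circle. It sends $y_v$ to $\pm z_c$ if $v$ is an endpoint of $e_c$, and to $0$ otherwise. The second case holds because the product then contains both endpoints, which become equal, and $x\wedge x=0$.

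Consequently $d^0\colon\Z^{l-1}\to\Z^{n'}$ is a signed incidence (coboundary) matrix of $G$ with the row of the basepoint vertex deleted. In the even case all signs come from the standard sign assignment, and $d^0$ is, up to rescaling basis vectors, the graph coboundary $C^0(G)/\Z\to C^1(G)$. Its cokernel is $H^1(G;\Z)\cong\Z^{p_1(G)}$, which is free. Injectivity of $d^0$ follows from connectivity of $G$ once the basepoint vertex is removed.

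\textbf{Main obstacle: the odd signs.} In the odd theory, a signed incidence matrix of a graph can have $2$-torsion in its cokernel when some cycle is unbalanced. I must therefore show that the signs can be absorbed by replacing each $z_c$ and $y_v$ by $\pm$ itself. I would first use the decoration fixed in Section~\ref{sec: Reduced odd Khovanov homology of positive links}, together with the circle orderings in the proof of Theorem~\ref{thm: maintheorem5}. Those orderings yield the identification \eqref{eqn:odd_even_isomorphism} for $|s|\geq1$, and I would extend it to degree $0$ by the same rescaling.

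Alternatively, I would check balance directly. Around any cycle of $G$, the product of the signs of $d^0$ must equal the even-case value. This should follow because $d^1\circ d^0=0$ imposes no constraint here, while the squares of the cube restricted to this grading are of type C, X or Y, never type A. Type-A squares require parallel edges, which $D'$ lacks. The edge assignment $\epsilon$ then satisfies $\delta\epsilon=\psi$ with $\psi$ trivial on the relevant faces, so $\epsilon$ is a coboundary there and can be gauged away.

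Once balance holds, $\operatorname{coker}(d^0)\cong H^1(G;\Z)\cong\Z^{n'-l+1}=\Z^{p_1(D)}$, which is the claim.
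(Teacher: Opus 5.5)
Your reduction is correct, and it takes a different route from the paper. In this grading only homological degrees $0$ and $1$ survive, since any two $1$-markers in $D'$ perform two merges. So the degree-one homology is $\operatorname{coker} d^0$, where $d^0\colon \Z^{V\setminus\{p\}}\to\Z^{E}$ has, in the column of $y_v$, an entry $\pm1$ exactly at the edges incident to $v$. You are also right to read the statement as homology, and to take the grading to be the almost-extreme one ($-|s_0D'|+3$, as in the inductive step). The paper instead inducts on $p_1(D')$ through the skein long exact sequence, exactly as in Lemma~\ref{lem: even almost extreme technical2}, using Proposition~\ref{prop: extreme odd KH complex} for $D_1$.

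\textbf{The gap.} It lies in the step you call the main obstacle: the arguments you give there are false.

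\emph{Even case.} The standard sign on every edge $s_0\to s^c$ is trivial, and both $y_u$ and $y_w$ map to $+z_c$. So $d^0$ is the \emph{unsigned} incidence matrix. It becomes a coboundary after $\pm1$ rescalings only if $G-p$ is bipartite, which fails for general $0$-adequate $D$.

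\emph{Odd case, your gauge argument.} The signs $\epsilon$ and $\sgn$ attach one sign to each cube edge $s_0\to s^c$. They therefore rescale the whole row of $z_c$ and can never change the balance of a cycle of $G$. Besides, no square of the cube survives in this grading, and $\psi_X$ (resp.\ $\psi_Y$) is nonzero on type-X (resp.\ type-Y) faces anyway.

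\emph{Where the signs actually come from.} The relative sign within a row comes only from the exterior algebra. Order the circles $v_1,\dots,v_l$ and let $y_{v_i}$ be the ordered wedge omitting $v_i$. The two entries in the row of $c=(v_i,v_j)$ then differ by $(-1)^{i+j+1}$. After $y_{v_i}\mapsto(-1)^iy_{v_i}$, the odd $d^0$ is, up to row signs, the coboundary of $G$ for \emph{every} $G$. Consequences:
\begin{itemize}
\item Odd and even have opposite balance on each odd cycle of $G-p$.
\item Your claim that the sign products agree with the even case is false.
\item \eqref{eqn:odd_even_isomorphism} cannot be extended to degree $0$ by sign rescalings of generators.
\end{itemize}

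\textbf{The fix.} None of this sign analysis is needed, because the basepoint column is missing. Choose a spanning tree $T$ of $G$ rooted at $p$. For $v\neq p$, let $c_v$ be the edge of $T$ joining $v$ to its parent. Order the vertices by their distance from $p$ in $T$. Then the rows $c_v$ of $d^0$ form a lower triangular matrix with diagonal entries $\pm1$, whatever the signs are. Hence $d^0$ composed with the projection onto $\Z^{E(T)}$ is an isomorphism. So $d^0$ is injective and $\operatorname{coker} d^0\cong\Z^{E\setminus E(T)}\cong\Z^{n'-l+1}=\Z^{p_1(D)}$.

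With this repair your route is a clean alternative to the paper's induction:
\begin{itemize}
\item It computes the complex outright.
\item It works verbatim for both theories and over any ring.
\item It avoids the grading bookkeeping in the skein sequence.
\item It shows directly why no bipartiteness condition appears in the reduced setting: deleting the basepoint vertex makes the cokernel free regardless of signs.
\end{itemize}
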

\begin{proof}
The proof of this lemma is identical to that of lemma~\ref{lem: even almost extreme technical2}.
\end{proof}

%% file: General_coefficients.tex
\section{Reduced Khovanov homology over other coefficent rings} \label{sec: Reduced Khovanov homology over other coefficent rings}

Torsion phenomena in the Khovanov homology of positive links were studied in \cite{KMMS_Khovanov_pos_fib}, motivating the study of torsion in the reduced even and odd theories. In the unreduced setting, a key ingredient is \cite[Proposition~4.8]{semi_adequate_torsion}, for which there is no direct analogue in the reduced setting. Nevertheless, the reduced theory remains sufficiently tractable to carry out the analysis over arbitrary coefficient rings. 
We first establish the following proposition for general coefficient rings and then apply the arguments of \cite[Section~4.2]{KMMS_Khovanov_pos_fib} to obtain the corresponding torsion results.

\begin{proposition} \label{prop: general coefficent}
Let $L$ be a non-split positive link with base point $p$, and let $R$ be a commutative ring with unit. Then the reduced even and odd Khovanov homologies over $R$ satisfy
\[
\begin{aligned}
Kh^{1,j}_{*,\red}(L,p;R) &\cong
\begin{cases}
R^{p_1(L)} & \text{if } j = 3 - \chi(L),\\
0 & \text{otherwise,}
\end{cases} \\
Kh^{i,\,3-\chi(L)}_{*,\red}(L,p;R) &\cong
\begin{cases}
R^{p_1(L)} & \text{if } i = 1,\\
0 & \text{otherwise.}
\end{cases}
\end{aligned}
\]
where $* \in \{e,o\}$, and $\chi(L)$ and $p_1(L)$ are as previously defined.
\end{proposition}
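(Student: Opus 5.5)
We re-run the integral arguments of Sections~\ref{sec: Reduced even Khovanov homology of positive links}--\ref{sec: Extreme and almost extreme reduced odd Khovanov homology} over $R$. The only step that genuinely used $\Z$ is Lee's thinness theorem for alternating links in Lemma~\ref{lem: reduced even KH last_group}, and this is where the work lies. We fix a positive diagram $D$ of $L$ and pass to $D'$ as before, keeping one crossing between each pair of Seifert circles. The reduction of Proposition~\ref{prop: even technical} (and its odd analogue, Proposition~\ref{prop: odd technical}) works over any $R$. The reason is that the maps $\delta_{iij},\delta_{jij}$ are multiplication by $\pm(a_p\pm a_q)$ from $\Lambda(s^iD)\otimes R$ into $\Lambda(s^{ij}D)\otimes R$. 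Since $a_p$ is a new free generator, this map is injective after tensoring with any $R$: it is a split injection of free modules. Hence $x_i=\pm x_j$ still holds, and $Kh^1_{*,\red}(D;R)\cong Kh^1_{*,\red}(D';R)$.

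The two inputs for the exact sequence also hold over $R$, because they are chain-level statements about free complexes. First, Lemma~\ref{lem: extreme even KH} and Proposition~\ref{prop: extreme even KH complex} show that the extreme complex is a single copy of $R$ in one degree. Second, Lemma~\ref{lem: almost extreme even KH} holds because the candidate generating cycle has no reduced component. The same is true in the odd case.

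To avoid thinness, we prove both displayed statements simultaneously by induction on $p_1(D')$, using the skein long exact sequence with $R$ coefficients at a crossing $v$ of $D'$. The induction is run over the whole row $j=3-\chi$ and the whole column $i=1$. We keep track of three facts:
\begin{enumerate}
\item the homology in homological degree $1$ is concentrated in the single quantum grading $n'-|s_0D'|+3$;
\item in that quantum grading it is $R^{p_1}$ and is free;
\item the rest of that quantum row vanishes, by the almost-extreme analysis of Theorem~\ref{thm: maintheorem5}, whose chain-level identifications with the unreduced complexes and with \cite[Remark~6.4]{almost_extremeKH} again hold over $R$.
\end{enumerate}
For the column, we exploit the fact that the $D_1$ term is extreme and supported in one bidegree. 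For any other quantum grading $j$, the sequence then gives $Kh^{1,j}(D')\cong Kh^{1,j'}(D_0)$ up to shift, and this vanishes by induction. This replaces the appeal to Lee's theorem. One must check that the grading shifts place the $D_1$ contribution precisely in degree $j=p_1(D')+2$, and this follows from the writhe bookkeeping already displayed in Lemma~\ref{lem: reduced even KH last_group}.

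In the relevant grading we obtain
\[
0\to R\to Kh^{1,p_1(D')+2}_{*,\red}(D';R)\to R^{p_1(D')-1}\to 0 .
\]
Its left end is zero because $Kh^{0}$ of $D_0$ vanishes in that grading. This sequence splits since the quotient is free, so the middle term is $R^{p_1(D')}$. The second statement follows by combining this with the row vanishing for $i\neq1$ from Theorem~\ref{thm: maintheorem5} over $R$. Finally, the grading shift from $D'$ to $D$ is as in the proof of Theorem~\ref{thm: maintheorem1}.

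The main obstacle is verifying that the long exact sequence truly isolates the $D_1$ contribution in every quantum grading. Concretely, we must show that $Kh^{2}$ of the extreme part of $D_1$ vanishes with $R$ coefficients, so that no connecting map obstructs the argument. We expect this to follow from Proposition~\ref{prop: extreme even KH complex}, since $D_1$ is adequate with $|s_0D_1|$ circles and the extreme complex is concentrated in a single degree.
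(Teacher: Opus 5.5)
\textbf{There is a genuine gap: the step that is meant to replace Lee's theorem does not hold up.} Work in the unnormalized gradings of Section~\ref{sec: Almost extreme reduced even Khovanov homology}. The relevant piece of the skein sequence is
\[
H^{0,j-1}_{\red}(D_1,p)\longrightarrow H^{1,j}_{\red}(D',p)\longrightarrow H^{1,j}_{\red}(D_0,p).
\]
As $j$ varies, the $D_1$ term runs through \emph{all} quantum gradings of $D_1$, and it is the extreme one only for the target grading. Lemma~\ref{lem: extreme even KH} and Proposition~\ref{prop: extreme even KH complex} control only the extreme quantum row of $D_1$, and the homology of $D_1$ is not supported in a single bidegree. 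So your claim that, for every other $j$, the sequence reduces $Kh^{1,j}(D')$ to the corresponding group of $D_0$ is unsupported. Yet this is exactly the concentration statement that thinness was supplying.

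What you actually need is a statement about a \emph{column} of $D_1$: over $R$, the lowest homological degree of the reduced complex of $D_1$ is concentrated in the extreme quantum grading. This is true, but it needs an argument you have not given. One way is as follows.
\begin{enumerate}
\item $G_0(D_1)$ is $G_0(D')$ with the edge $v$ contracted. It is connected, and it is loopless because $D'$ has at most one crossing per pair of Seifert circles.
\item The group in question is the common kernel of the merge maps on $a_p\Lambda(s_0D_1)\otimes R$. Write a cycle as $\sum_S c_S\,a_pa_S$.
\item An edge $pu$ with $u\notin S$ forces $c_S=0$.
\item An edge $vw$ with $v\in S$ and $w\notin S\cup\{p\}$ forces $c_S=\pm c_{(S\setminus\{v\})\cup\{w\}}$.
\item Induction on the distance from $p$ to the nearest circle outside $S\cup\{p\}$ then kills every coefficient except that of the top monomial.
\end{enumerate}
The obstacle you single out instead, the vanishing of the next group in the extreme row of $D_1$, is already given by Lemma~\ref{lem: extreme even KH}. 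It says nothing about the other quantum gradings. You should also choose $v$ on a cycle of $G_0(D')$, so that $D_0$ stays non-split with $p_1(D_0)=p_1(D')-1$.

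\textbf{The premise of the detour is also mistaken: thinness is not a $\Z$-only input.} The integral reduced homology of a non-split alternating link is free and thin. By the universal coefficient theorem it is therefore thin over every $R$, and this is how the paper keeps Lee's theorem over $R$ in Lemma~\ref{lem: reduced even KH last_group}. Likewise, the paper obtains the second statement in one line, by applying the universal coefficient theorem to the integral Proposition~\ref{thm: mainproposition7}, whose answer is free and concentrated in $i=1$. Your alternative, re-running Theorem~\ref{thm: maintheorem5} over $R$ through \cite[Remark~6.4]{almost_extremeKH}, needs that universal coefficient step anyway. That remark is an integral statement, and the unreduced almost-extreme homology genuinely changes with $R$ when $G_0(D)$ is not bipartite.
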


\begin{proof}
	The proof proceeds by verifying that the arguments of the previous sections extend to this general setting.	

	\begin{enumerate}
	\item In Section~\ref{sec: Reduced even Khovanov homology of positive links}, Proposition~\ref{prop: even technical} and Lemma~\ref{lem: extreme even KH} are independent of the choice of coefficient ring; their statements and proofs remain valid without modification. For Lemma~\ref{lem: almost extreme even KH}, the argument carries over directly and shows that the relevant cycle does not lie in the reduced complex. Furthermore, Lemma~\ref{lem: reduced even KH last_group} relies only on exact sequences and Lee's theorem for alternating links, both of which hold over arbitrary coefficient rings. 
	Thus, the arguments in this section extend without essential modification to give the first statement of the theorem in the even case. 
	
	Similarly, the arguments of Section~\ref{sec: Reduced odd Khovanov homology of positive links} extend to arbitrary coefficient rings by the same reasoning as above, yielding the first statement in the odd case.

	\item The second statement follows from an application of the universal coefficient theorem to Proposition~\ref{thm: mainproposition7}.
\end{enumerate}
\end{proof}

An immediate consequence is the following.

\begin{corollary}\label{cor: torsion}
Let $L$ be a non-split positive link with base point $p$. Then
\[
\tor\bigl(Kh^{2,j}_{*,\red}(L,p)\bigr) = 0 \quad \text{for all } j,
\]
where $*\in \{e,o\}$ and $\tor$ denotes the torsion subgroup.
\end{corollary}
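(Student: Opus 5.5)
The plan is to use the universal coefficient theorem for cochain complexes together with Proposition~\ref{prop: general coefficent}, specialized to $R=\mathbb{Z}_\ell$ for primes $\ell$ (and $R=\mathbb{Q}$). Fix a quantum grading $j$. The reduced complex $CKh^{*,j}_{*,\red}(L,p)$ is a complex of finitely generated free abelian groups, and the differential raises the homological degree. So for any $R$ there is a split short exact sequence
\[
0 \to Kh^{i,j}_{*,\red}(L,p)\otimes R \to Kh^{i,j}_{*,\red}(L,p;R) \to \tor_1^{\mathbb{Z}}\bigl(Kh^{i+1,j}_{*,\red}(L,p),R\bigr) \to 0 .
\]
The point is that torsion in degree $2$ is detected in degree $1$ with field coefficients, through the $\tor_1$ term.

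First, take $i=1$ and $R=\mathbb{Z}_\ell$. By Theorem~\ref{thm: maintheorem1}, $Kh^{1,j}_{*,\red}(L,p)\otimes \mathbb{Z}_\ell$ equals $\mathbb{Z}_\ell^{p_1(L)}$ when $j=3-\chi(L)$ and is $0$ otherwise. By Proposition~\ref{prop: general coefficent}, applied with $R=\mathbb{Z}_\ell$, the middle term has exactly the same form. Comparing dimensions over $\mathbb{Z}_\ell$ in each $j$ gives $\tor_1^{\mathbb{Z}}(Kh^{2,j}_{*,\red}(L,p),\mathbb{Z}_\ell)=0$ for every $j$ and every prime $\ell$.

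Next, each $Kh^{2,j}_{*,\red}(L,p)$ is finitely generated. We have $\tor_1^{\mathbb{Z}}(\mathbb{Z}_{\ell^k},\mathbb{Z}_\ell)\cong\mathbb{Z}_\ell$, and $\tor_1$ commutes with finite direct sums. So a vanishing $\tor_1$ for all primes $\ell$ forces the torsion subgroup of $Kh^{2,j}_{*,\red}(L,p)$ to be trivial. This argument works for both $*=e$ and $*=o$, since Proposition~\ref{prop: general coefficent} covers both theories.

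The main obstacle is not the algebra but the legitimacy of the input. We need Proposition~\ref{prop: general coefficent} over $\mathbb{Z}_\ell$, and in particular over $\mathbb{Z}_2$, in the form of an equality of dimensions for every $j$. This relies on the earlier arguments (the injectivity of $\delta_{iij}$ and the exact-sequence induction using Lee's thinness result) holding over a field. If injectivity over $\mathbb{Z}_2$ were problematic, I would fall back on the first part of Proposition~\ref{prop: general coefficent} only as stated. That is still enough, because it asserts $Kh^{1,j}_{*,\red}(L,p;\mathbb{Z}_\ell)\cong \mathbb{Z}_\ell^{p_1(L)}$ or $0$ in exactly the same gradings as the integral group tensored with $\mathbb{Z}_\ell$.
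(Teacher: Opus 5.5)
Your argument is correct and is essentially the paper's: both apply the universal coefficient sequence in homological degree $1$ and compare Theorem~\ref{thm: maintheorem1} with Proposition~\ref{prop: general coefficent} to force $\tor_1^{\mathbb{Z}}(Kh^{2,j}_{*,\red}(L,p),R)=0$. The only difference is bookkeeping. The paper treats $j=1-\chi(L)$ and $j=3-\chi(L)$ separately, since there $Kh^{2,j}_{*,\red}$ vanishes outright by Lemmas~\ref{lem: extreme even KH}, \ref{lem: extreme odd KH} and Proposition~\ref{thm: mainproposition7}, and it invokes the universal coefficient theorem only for $j>3-\chi(L)$; your dimension count over $\mathbb{Z}_\ell$ instead handles every $j$ uniformly and makes explicit the final step from vanishing $\tor_1$ to vanishing torsion.
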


\begin{proof}
The universal coefficient theorem gives a short exact sequence
\[
0 \to Kh^{1,j}_{*,\red}(L,p)\otimes R
\to Kh^{1,j}_{*,\red}(L,p;R)
\to \tor^{\mathbb{Z}}_{1}\bigl(Kh^{2,j}_{*,\red}(L,p),R\bigr)
\to 0.
\]

\begin{enumerate}
	\item For $j = 1-\chi(L)$, the result follows from Lemmas~\ref{lem: extreme even KH} and \ref{lem: extreme odd KH}.
	\item For $j = 3-\chi(L)$, the result follows from Proposition~\ref{thm: mainproposition7}. 
	\item For $j > 3-\chi(L)$, both $Kh^{1,j}_{*,\red}(L,p;R)$ and $Kh^{1,j}_{*,\red}(L,p)\otimes R$ vanish by Proposition~\ref{prop: general coefficent} and Theorem~\ref{thm: maintheorem1}. Hence
	\[ \tor^{\mathbb{Z}}_{1}\bigl(Kh^{2,j}_{*,\red}(L,p),R\bigr)=0 \]
	for all $R$, and therefore $\tor\bigl(Kh^{2,j}_{*,\red}(L,p)\bigr)=0.$
\end{enumerate}
\end{proof}

\begin{corollary} \label{cor: torsion_positive}
Let $L$ be a non-split positive link. Then
\[
\tor\bigl(Kh^{2,j}_{o}(L)\bigr) = 0 \quad \text{for all } j,
\]
where $\tor$ denotes the torsion subgroup.
\end{corollary}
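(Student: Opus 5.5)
The plan is to deduce Corollary~\ref{cor: torsion_positive} directly from Corollary~\ref{cor: torsion} (odd case), using the splitting of the odd Khovanov complex from \cite[Proposition~1.7]{ORS_OddKhovanovHomology}. Fix a positive diagram $D$ of $L$ together with a basepoint $p$ away from the crossings. By Equation~\eqref{eqn:oddKH_split}, the unreduced odd complex decomposes as a direct sum of chain complexes,
\[
CKh^{*,j}_{o}(D) \cong CKh^{*,j-1}_{o,\red}(D,p) \oplus CKh^{*,j+1}_{o,\red}(D,p).
\]
The essential point to check is that this is a splitting of \emph{complexes}, with both summands preserved by $\partial_o$, and not merely of chain groups. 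This is exactly the content of the ORS proposition, so I would simply cite it.

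Taking homology of this splitting gives
\[
Kh^{2,j}_{o}(L) \cong Kh^{2,j-1}_{o,\red}(L) \oplus Kh^{2,j+1}_{o,\red}(L).
\]
The torsion subgroup of a direct sum of finitely generated abelian groups is the direct sum of the torsion subgroups. Therefore
\[
\tor\bigl(Kh^{2,j}_{o}(L)\bigr) \cong \tor\bigl(Kh^{2,j-1}_{o,\red}(L)\bigr) \oplus \tor\bigl(Kh^{2,j+1}_{o,\red}(L)\bigr).
\]
By Corollary~\ref{cor: torsion} with $*=o$, both summands vanish for every quantum grading. Hence $\tor(Kh^{2,j}_o(L))=0$ for all $j$.

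The only real content sits upstream, in Corollary~\ref{cor: torsion}. There, the case $j>3-\chi(L)$ uses the universal coefficient theorem together with Proposition~\ref{prop: general coefficent}. The low gradings $j\le 3-\chi(L)$ are handled by Lemma~\ref{lem: extreme odd KH} and Proposition~\ref{thm: mainproposition7}, and gradings below $1-\chi(L)$ vanish for trivial reasons. I take that corollary as given.

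The one point to watch in the present deduction is that the reduced odd homology does not depend on the choice of basepoint, which the paper recalls in Section~\ref{sec: reduced Khovanov homology}. This lets us write $Kh_{o,\red}(L)$ without a basepoint and apply Corollary~\ref{cor: torsion} in both shifted gradings $j\pm1$. I do not expect any genuine obstacle here.
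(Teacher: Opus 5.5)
Your proposal is correct and is essentially the paper's own argument. The paper likewise fixes a basepoint, uses the ORS splitting $Kh^{i,j}_o(L) \cong Kh^{i,j-1}_{o,\red}(L) \oplus Kh^{i,j+1}_{o,\red}(L)$, and concludes from Corollary~\ref{cor: torsion} (odd case) in the gradings $j\pm 1$; you additionally spell out that the splitting is one of complexes and that torsion commutes with direct sums, which the paper leaves implicit.
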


\begin{proof}
We begin by choosing a base point $p$ on a link diagram. Using Corollary~\ref{cor: torsion} together with the relation
\[
Kh^{i,j}_o(L) \cong Kh^{i,j-1}_{o,\red}(L) \oplus Kh^{i,j+1}_{o,\red}(L),
\]
we obtain the desired result.
\end{proof}

\begin{proof}[Proof of Theorem~\ref{thm: maintheorem4}]
This follows immediately from Corollaries~\ref{cor: torsion} and~\ref{cor: torsion_positive}.
\end{proof}

%% file: Cables_OddKH.tex
\section{Reduced Khovanov homology of cables} \label{sec: Reduced Khovanov homology of cables}
In this section, we investigate cables of positive knots, drawing motivation from the results of \cite{KMMS_Khovanov_pos_fib}. Our approach closely parallels the arguments in \cite[Section~5]{KMMS_Khovanov_pos_fib}.

Given a diagram $D$ with writhe $w$, the standard diagram $D_{p,q}$ of the $(p,q)$-cable $K_{p,q}$ is obtained by taking $p$ blackboard parallel copies of $D$ and adding $n=pw-q$ negative $(1/p)$-twists to the $p$-parallel strands. Let $D$ be a positive diagram. If $q \geq pw$, then all crossings in the diagram $D_{p,q}$ are positive. Hence, the result follows by combining Theorem~\ref{thm: maintheorem1} with the observation that $p_1(K_{p,q}) = p_1(K)$ for $q \geq pw$, together with Schubert’s cabling formula~\cite{Schubert}
\[
-\chi(K_{p,q}) = -p\chi(K) + q(p - 1).
\] 

It therefore remains to establish the result for $p \leq q < pw$.
We proceed by induction. The inductive step requires the introduction of a family of satellite links, which we refer to as twisted cable links.

Let $2 \leq p \leq q \leq pw$ and $0 \leq m \leq p-1$. The \emph{twisted cable} $K_{p,q;m}$ is defined as the satellite link of $K$ with pattern given by the braid word
\[
w_{p,q} \cdot (\sigma^{-1} \sigma^{-2} \cdots \sigma^{-m}),
\]
where $w_{p,q} = (\sigma^{1}\sigma^{2}\cdots \sigma^{p-1})^q$ is the standard $p$-stranded braid word of the torus link $T(p,q)$.

\begin{lemma}\cite[Lemma~5.4]{KMMS_Khovanov_pos_fib}
    Let $K$ be a nontrivial knot and $p \neq 0$. Then the twisted cable $K_{p,q;m}$ of $K$ is a non-split link.
\end{lemma}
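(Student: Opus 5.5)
The plan is to show directly that the satellite $K_{p,q;m}$ has connected complement with incompressible boundary behaviour, or, more simply, to use the standard criterion that a satellite link is split if and only if its pattern is split in the solid torus or the companion is trivial. Since $K$ is assumed nontrivial, the companion torus $T=\partial N(K)$ is incompressible in $S^3\setminus K_{p,q;m}$ once we know the pattern is not contained in a ball inside the solid torus $V$. So the argument has two steps. First, show the pattern has nonzero winding number, hence is not contained in a ball in $V$. Second, show the pattern is non-split inside $V$.

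For the first step, the pattern is the closure in $V$ of a $p$-strand braid, so every component winds positively around the core. The total winding number is $p\neq 0$, and each component meets every meridian disk of $V$. Consequently no component lies in a ball in $V$, and $\partial V$ is incompressible in $V\setminus P$.

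For the second step, suppose a $2$-sphere $S$ in $S^3\setminus K_{p,q;m}$ separates components. Put $S$ in minimal position with respect to $T$ and use an innermost-circle argument. Any circle of $S\cap T$ bounds a disk on $S$, and that disk is disjoint from the link. Because $T$ is incompressible on both sides (it is knotted on the outside since $K$ is nontrivial, and incompressible on the inside by the first step), the circle must be inessential on $T$. It can then be removed by an isotopy. So $S$ lies entirely in $V$ or in $S^3\setminus V$. The second case is impossible, since all of the link lies in $V$ and a sphere in the knot exterior bounds a ball there. Hence $S\subset V$ splits the braid closure $\widehat{\beta}$ inside $V$.

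It remains to rule this out. Every component of a braid closure meets each meridian disk $\Delta$. Make $S\cap\Delta$ minimal. An innermost circle on $\Delta$ bounds a subdisk that can be used to surger $S$, which reduces the intersection. In the end $S$ misses $\Delta$, so $S$ lies in the ball $V\setminus\Delta$. There $\widehat{\beta}$ becomes the braid $\beta$ with endpoints on both sides. A sphere separating two components there would have to separate arcs that both run from the bottom to the top of the cut ball. The complement of $S$ in that ball has exactly one component meeting the boundary, so it contains all arcs, which is a contradiction.

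The main obstacle is the incompressibility bookkeeping in the innermost-circle argument, namely making sure that compressing disks for $T$ do not pass through the link. I would cite the standard satellite lemma for this, in the form given in the source of \cite[Lemma~5.4]{KMMS_Khovanov_pos_fib}.
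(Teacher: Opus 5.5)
The paper gives no argument of its own here; it simply quotes \cite[Lemma~5.4]{KMMS_Khovanov_pos_fib}. Your overall route is the standard satellite argument and is sound: $T=\partial V$ is incompressible outside because $K$ is nontrivial, and inside because every component of the closed braid has positive winding number; an innermost-circle argument then makes a splitting sphere disjoint from $T$; and spheres outside $V$ cannot split $L$. The ``if and only if'' criterion you quote at the start is false as stated: a trivial companion with pattern $\widehat{\sigma_1^2}$ yields the non-split Hopf link. You only use the direction you actually prove, so this does no harm.

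The gap is in your last paragraph. You surger $S$ along the subdisk $E\subset\Delta$ bounded by an innermost circle $c$ of $S\cap\Delta$. Innermost only gives $\mathrm{int}(E)\cap S=\emptyset$; nothing you say prevents $E$ from containing points of $L\cap\Delta$. If it does, the surgered spheres meet the link and your reduction breaks. For a general pattern this really happens: take a component crossing $\Delta$ twice with opposite signs, enclosed by $S$. The fact that ``every component meets each meridian disk'' does not rule this out.

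What rules it out here is the sign of the intersections. Let $F\subset S$ be one of the disks bounded by $c$. Then $E\cup F$ is an embedded sphere, so each component $L_i$ has algebraic intersection $0$ with it. Since $L_i\cap F=\emptyset$, and for a closed braid all points of $L_i\cap\Delta$ have the same sign, it follows that $L_i\cap E=\emptyset$. With this added, your cut-ball argument finishes the proof.

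More simply, you can drop $\Delta$ altogether. A sphere $S\subset\mathrm{int}(V)$ bounds a ball $B\subset V$, since $\partial V$ is connected and lies on one side of $S$. A component of $L$ inside $B$ would be null-homotopic in $V$, contradicting its nonzero winding number. So $S$ cannot separate $L$.
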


\begin{proposition}\label{prop:twisted_cable}
	Let $K$ be a positive knot. Then, for $2\leq p \leq q-1$ and $1\leq m \leq p-1$ we have 
	\begin{align*}
		Kh^{i,j}_{*,\red}(K_{p,q;m})&=0 \,\textrm{ for every }\,  i<0,\\
		Kh^{0,j}_{*,\red}(K_{p,q;m})&=\begin{cases}
			\Z\,&\textrm{ if }\,  j=1-p\chi(K)+q(p-1)-m, \\
			0 \,&\textrm{else,}
		\end{cases}\\
		Kh^{1,j}_{*,\red}(K_{p,q;m})&=\begin{cases}
			\Z^{p_1(K)}\,&\textrm{ if }\,  j=3-p\chi(K)+q(p-1)-m,\\
            0 \,&\textrm{else}
		\end{cases}\\
		\tor(Kh^{2,j}_{*,\red}(K_{p,q;m})) &=0 \quad \text{for every} \quad j.
	\end{align*}
\end{proposition}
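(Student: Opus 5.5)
We argue by induction on $m$, following \cite[Section~5]{KMMS_Khovanov_pos_fib}, using the skein long exact sequence in reduced Khovanov homology at the crossing $\sigma^{-m}$ of the pattern braid. This is the long exact sequence of \cite{MPSWY_Search_for_torsion} already used in Lemma~\ref{lem: reduced even KH last_group}, in both its even and odd versions. At this negative crossing, one resolution deletes $\sigma^{-m}$ and gives the twisted cable $K_{p,q;m-1}$. The other resolution gives a diagram isotopic to a satellite of $K$ with fewer crossings. In the argument of \cite{KMMS_Khovanov_pos_fib} this diagram is identified, after cancelling $\sigma^{m-1}\sigma^{-m}$-type pairs via braid relations in $w_{p,q}$, with a (possibly split-off) cable or twisted cable with smaller $q$ or $m$; in the extreme case it is a positive diagram with one fewer component twist. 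We take that identification as given.

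The base case $m=0$ is the honest cable $K_{p,q}$, or the torus-type positive satellite when $q\ge pw$. We treat it by the same outer induction on $pw-q$ that is used for Theorem~\ref{thm: maintheorem4}; for $q\ge pw$ it follows from Theorem~\ref{thm: maintheorem1}, Proposition~\ref{prop: general coefficent}, Corollary~\ref{cor: torsion}, and Schubert's formula.

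For the inductive step we take the two exact triangles and track gradings. The writhe changes by $-1$, and the linking of the resolved link with the rest produces a homological shift $c$ and a quantum shift $3c+1$ or $1$ between the three terms, exactly as in Lemma~\ref{lem: reduced even KH last_group}. The induction hypotheses give vanishing for $i<0$, a single $\Z$ in degree $0$, and a free group $\Z^{p_1(K)}$ in degree $1$ for the smaller link. The other term is supported in degrees $\ge 2$ relative to the relevant window because the shift $c\geq 2$; this is why the hypothesis $q\ge p+1$ is needed, so that the resolved satellite has enough positive twisting. With this support, the sequence collapses in degrees $i\le 1$ to isomorphisms
\[
Kh^{i,j}_{*,\red}(K_{p,q;m})\cong Kh^{i,j+1}_{*,\red}(K_{p,q;m-1})\quad (i\le 1).
\]
The quantum shift of $-1$ per step matches the $-m$ in the claimed gradings.

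For torsion, we repeat the argument with coefficients in an arbitrary ring $R$, as in Proposition~\ref{prop: general coefficent}. The exact-sequence step gives $Kh^{1,j}_{*,\red}(K_{p,q;m};R)\cong R^{p_1(K)}$ in the single grading $j=3-p\chi(K)+q(p-1)-m$ and zero elsewhere, and the degree-$0$ group is $R$. The universal coefficient argument of Corollary~\ref{cor: torsion} then forces $\tor_1^{\Z}(Kh^{2,j}_{*,\red},R)=0$ for all $R$, hence the torsion vanishes.

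The main obstacle is the grading bookkeeping showing that the other resolution contributes nothing in homological degrees $0,1$ and in degree $2$ into the connecting map. This requires knowing its lowest nonvanishing homological degree, which comes from the extreme-grading results (Lemma~\ref{lem: extreme even KH}, Lemma~\ref{lem: extreme odd KH}) applied to its $0$-adequate positive-type diagram, together with non-splitness from \cite[Lemma~5.4]{KMMS_Khovanov_pos_fib}. The first check is that the shift $c=\tfrac{w_1-w-1}{2}$ there is at least $2$ when $q\ge p+1$.
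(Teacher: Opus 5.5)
Your plan is essentially the paper's argument. The paper's proof is only the remark that the skein-exact-sequence induction of \cite[Theorem~5.5]{KMMS_Khovanov_pos_fib} through the twisted cables (anchored at the positive range $q\ge pw$ and at $K_{p,q;0}=K_{p,q}=K_{p,q+1;p-1}$) carries over verbatim to reduced even and odd homology. Your sketch is that adaptation, and it defers to the same reference the same input: the description of the unoriented resolution at $\sigma_m^{-1}$, and the check that it enters only in homological degrees $\ge 2$ when $q\ge p+1$.

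Two small fixes in what you did write out:
\begin{itemize}
\item At $j=3-p\chi(K)+q(p-1)-m$, an abstract isomorphism $Kh^{1,j}_{*,\red}(K_{p,q;m};R)\cong R^{p_1(K)}$ does not by itself kill $\tor_1^{\Z}(Kh^{2,j}_{*,\red}(K_{p,q;m}),R)$ for arbitrary $R$. Run the universal coefficient step with $R=\mathbb{F}_\ell$ instead, where the injection $\mathbb{F}_\ell^{p_1(K)}\hookrightarrow\mathbb{F}_\ell^{p_1(K)}$ must be onto.
\item The lower bound on the support of the other term comes simply from $Kh^{i}=0$ for $i<-n_-$ of a suitable diagram of that link. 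It does not come from the extreme-grading lemmas, which are non-vanishing statements.
\end{itemize}
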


\begin{proof}
The proof of this theorem proceeds as in \cite[Theorem~5.5]{KMMS_Khovanov_pos_fib}, since the underlying argument relies entirely on exact sequences that are equally valid in the reduced setting.
\end{proof}

\begin{proof}[Proof of Theorem~\ref{thm: maintheorem4}]
The proof naturally splits into two cases.
For $q \geq pw$, the cable $K_{p,q}$ is positive, the result follows by combining Theorem~\ref{thm: maintheorem1} with the observation that $p_1(K_{p,q}) = p_1(K)$ for $q \geq pw$, together with Schubert’s cabling formula~\cite{Schubert}
\[
-\chi(K_{p,q}) = -p\chi(K) + q(p - 1).
\] 
For $p+1 \leq q < pw$, the result follows from Proposition~\ref{prop:twisted_cable}. For the boundary case $p=q$, we use the identification $K_{p,q}=K_{p,q+1;p-1}$. Since $(q+1)-1 \geq p$, Proposition~\ref{prop:twisted_cable} applies to $K_{p,q+1;p-1}$. Furthermore,
$$1-p\chi(K)+(q+1)(p-1)-(p-1)=1-\chi(K_{p,q}).$$
Hence, the quantum gradings given by Proposition~\ref{prop:twisted_cable} agree with those in the statement of Theorem~\ref{thm: maintheorem5}.
\end{proof}